\documentclass[reqno, 11pt]{amsart}
\usepackage{dsfont}
\usepackage{amssymb}
\usepackage{mathrsfs}
\usepackage{enumitem}
\usepackage{comment}
\usepackage[pdfstartview=FitH, pdfborder={0 0 0}, colorlinks=true, citecolor=blue, linkcolor=blue]{hyperref}
\usepackage{aliascnt}
\usepackage[backend=biber, style=alphabetic]{biblatex}
\AtBeginBibliography{\setlength{\emergencystretch}{2em}}
\hypersetup{
	pdftitle={Heat kernel geometry and Gromov's volume growth conjecture},
	pdfauthor={Jian Ge},
	pdfsubject={Heat-kernel proof of a uniform volume-growth estimate},
	pdfkeywords={positive scalar curvature, nonnegative Ricci curvature, heat kernel, Nash entropy, volume growth}
}
\theoremstyle{plain}
\newtheorem{theorem}{Theorem}[section]

\newaliascnt{conjecture}{theorem}

\aliascntresetthe{conjecture}

\newaliascnt{corollary}{theorem}
\newtheorem{corollary}[corollary]{Corollary}
\aliascntresetthe{corollary}

\newaliascnt{lemma}{theorem}
\newtheorem{lemma}[lemma]{Lemma}
\aliascntresetthe{lemma}

\newaliascnt{fact}{theorem}

\aliascntresetthe{fact}

\newaliascnt{claim}{theorem}

\aliascntresetthe{claim}

\newaliascnt{proposition}{theorem}
\newtheorem{proposition}[proposition]{Proposition}
\aliascntresetthe{proposition}

\theoremstyle{definition}

\newaliascnt{definition}{theorem}
\newtheorem{definition}[definition]{Definition}
\aliascntresetthe{definition}

\newaliascnt{example}{theorem}
\newtheorem{example}[example]{Example}
\aliascntresetthe{example}

\theoremstyle{remark}

\newaliascnt{remark}{theorem}
\newtheorem{remark}[remark]{Remark}
\aliascntresetthe{remark}

\numberwithin{equation}{section}

\newcommand{\Ric}{\operatorname{Ric}}
\newcommand{\Sc}{\operatorname{Scal}}
\newcommand{\Vol}{\operatorname{Vol}}
\newcommand{\tr}{\operatorname{tr}}
\newcommand{\diver}{\operatorname{div}}
\newcommand{\hes}{\operatorname{Hess}}
\newcommand{\dd}{\,d}

\newcommand{\Wpole}{W^{\mathrm{pole}}}
\newcommand{\supp}{\operatorname{supp}}
\newcommand{\End}{\operatorname{End}}

\begin{document}
\title[Gromov's volume conjecture]{Heat kernel geometry and Gromov's volume growth conjecture}
\author[JG]{Jian Ge}
\address[]{Beijing Normal University, Beijing, China}
\email{jge@bnu.edu.cn}
\subjclass[2020]{Primary: 53C21, 53C23; Secondary: 58J35}
\keywords{positive scalar curvature, nonnegative Ricci curvature, heat kernel, Nash entropy, volume growth}
\begin{abstract}
	In 1986, Gromov asked whether every complete noncompact $n$-dimensional Riemannian manifold with nonnegative Ricci curvature and scalar curvature at least one satisfies:
	\[
		\Vol_g	(B(p, R))\le C_{n}R^{n-2}
	\]
	for all $p\in M$ and $R>0$. We answer this question affirmatively using the heat-kernel Fisher metric and Nash entropy.
\end{abstract}
\maketitle

\section{Introduction}
Let $(M^{n}, g)$ be a connected, complete, noncompact $n$-dimensional Riemannian manifold. Under nonnegative sectional curvature and the scalar-curvature bound $\Sc\ge 1$, Gromov stated the uniform estimate
\begin{equation*}
	\sup_{p} \Vol(B(p, R)) \le C_{n} R^{n-2},
	\qquad R>1,
\end{equation*}
and asked whether it extends to nonnegative Ricci curvature
\cite[p.~114, (5') and \S2.A(b)]{Gro1986}. A frequently used later formulation of Gromov's question asks whether, for a fixed base point $p$,
\begin{equation*}
	\limsup_{R\to \infty}\frac{\Vol(B(p, R))}{ R^{n-2}}<\infty,\qquad \Ric_{g}\ge 0, \Sc_{g}\ge 1.
\end{equation*}
In dimension $n=3$, Munteanu and Wang proved the stronger uniform form \cite{MW2026}: there exists a universal constant $C$ such that, for every $p\in M$ and every $R>0$,
\begin{equation*}
	\Vol(B(p, R)) \le C R.
\end{equation*}
In higher dimensions, after normalizing the scalar-curvature lower bound to $n(n-1)$, Wang--Xie--Zhu--Zhu proved the same exponent after taking the infimum over centers at each scale,
\begin{equation*}
	\inf_{p\in M}\Vol(B(p,R))\le c(n)R^{n-2}, \qquad R\ge0,
\end{equation*}
see \cite[Theorem~1.9]{WXZZ2024}, the minimizing center may depend on $R$.

In this note, we answer this question affirmatively in the uniform form.
\begin{theorem}\label{thm:Main}
	Let $(M^{n}, g)$, $n\ge3$, be a complete, connected, noncompact Riemannian manifold. If
	\begin{equation*}
		\Ric_{g}\ge 0, \quad \Sc\ge 1,
	\end{equation*}
	then there exists a constant $C_{n}<\infty$ depending only on the dimension $n$, such that
	\begin{equation}\label{eq:Main01}
		\Vol_{g}(B(p, R))\le C_{n} R^{n-2},
	\end{equation}
	for every $p\in M$ and every $R>0$.
\end{theorem}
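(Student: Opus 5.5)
A natural route to Theorem~\ref{thm:Main} is through the heat kernel $H(x,y,t)$ of $(M,g)$ and its Nash entropy
\[
\mathcal N_p(t)=-\int_M H(p,y,t)\log\bigl((4\pi t)^{n/2}H(p,y,t)\bigr)\,dy-\frac n2 .
\]
Since $\Ric\ge0$, the Li--Yau estimates give two-sided Gaussian bounds $H(p,p,t)\approx \Vol(B(p,\sqrt t))^{-1}$, with constants depending only on $n$. They also let us compare $\mathcal N_p(t)$ with $\log\bigl(\Vol(B(p,\sqrt t))/t^{n/2}\bigr)$ up to an additive constant $C(n)$. The target bound \eqref{eq:Main01} is therefore equivalent, modulo dimensional constants, to the entropy inequality
\[
\mathcal N_p(t)\le -\log t + C(n)\qquad\text{for } t\ge1,
\]
uniformly in $p$. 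The range $R\le1$ is trivial: Bishop--Gromov gives $\Vol(B(p,R))\le\omega_nR^n\le\omega_nR^{n-2}$.

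\textbf{Derivative identity.} I would compute $\frac{d}{dt}\mathcal N_p$ along the heat flow. Write $H=(4\pi t)^{-n/2}e^{-f}$. Perelman-type / Li--Yau entropy computations give
\[
\frac{d}{dt}\bigl(t\,\mathcal N_p\bigr)'=-2t\int_M\Bigl(\bigl|\hes f-\tfrac{g}{2t}\bigr|^2+\Ric(\nabla f,\nabla f)\Bigr)H .
\]
With $\Ric\ge0$ this yields monotonicity. To extract the gain $-\log t$, I would use the Fisher information $I(t)=\int|\nabla f|^2H$, which is the heat-kernel Fisher metric. Its relation to $\mathcal N'$ is
\[
\mathcal N'(t)=-\Bigl(I(t)-\frac{n}{2t}\Bigr)\cdot(\text{sign conventions}).
\]
The key is to improve the trace: $|\hes f-\tfrac{g}{2t}|^2\ge\frac1n(\Delta f-\tfrac n{2t})^2$ is the Li--Yau bound, and it only gives dimension $n$. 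To get effective dimension $n-2$, I would split $\Ric$ and use $\Sc\ge1$.

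\textbf{Main obstacle.} The hard step is converting $\Sc\ge1$ into a deficit of two dimensions in the entropy derivative, since the identity sees only $\Ric(\nabla f,\nabla f)$, a single direction. My first attempt would be to use Bochner for the vector field $\nabla f$ together with a $\mu$-bubble or warped-product slicing: along level sets of $f$, or of a Busemann-type function built from $\log H$, the Gauss equation transfers $\Sc\ge1$ to the slices. On scale $\sqrt t\gg1$ these slices should then be forced to have $(n-2)$-dimensional growth, via the Wang--Xie--Zhu--Zhu estimate applied in the fibers, or via the Munteanu--Wang $n=3$ argument as a model. Concretely, I would aim to prove
\[
\int_M\Ric(\nabla f,\nabla f)H\ \ge\ c(n)\bigl(I(t)-\tfrac{n-2}{2t}\bigr)_+/t .
\]
Integrating this differential inequality would give $t\,I(t)\le \frac{n-2}2+C/t$. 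Integrating once more yields $\mathcal N_p(t)\le-\log t+C(n)$ for $t\ge1$. Converting back through the Li--Yau comparison then gives $\Vol(B(p,R))\le C_nR^{n-2}$.

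The uniformity in $p$ comes for free, because every constant above depends only on $n$. This uniformity, rather than the fixed-point $\limsup$ form, is exactly what distinguishes the statement from \cite[Theorem~1.9]{WXZZ2024}.
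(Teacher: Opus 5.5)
Your outer frame matches the paper's. You reduce to the entropy decay $\mathcal N_p(t)\le-\log t+C(n)$, convert it to volume with the Li--Yau Gaussian upper bound, and handle small $R$ by Bishop--Gromov. But the whole content of the theorem sits in your inequality $\int_M\Ric(\nabla f,\nabla f)H\ge c(n)\bigl(I-\frac{n-2}{2t}\bigr)_+/t$, and you give no proof of it. The suggested mechanism does not get off the ground:
\begin{itemize}
\item Level sets of $f$, or of a Busemann-type function, do not inherit $\Ric\ge0$.
\item The Gauss equation $\Sc_\Sigma=\Sc-2\Ric(\nu,\nu)+(\tr A)^2-|A|^2$ gives no positive lower bound on $\Sc_\Sigma$, because $\Ric(\nu,\nu)$ can absorb all of $\Sc$. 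In $S^2_a\times\mathbb{R}^{n-2}$, for example, the totally geodesic hypersurfaces $S^1\times\mathbb{R}^{n-2}$ are flat.
\item The Wang--Xie--Zhu--Zhu bound concerns manifolds satisfying the ambient curvature hypotheses, which the slices do not. It also only controls an infimum over centers.
\item Even a good volume-growth bound for slices would not by itself give a pointwise-in-time inequality for a heat-weighted integral at a fixed pole.
\end{itemize}

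Even granting your inequality as written, the integration step fails. Put $J=tI$ and $x=\frac n2-J\ge0$. Combining the fixed-pole identity $\partial_t\bigl(t(J-\frac n2)\bigr)=-2t^2\int_M\bigl(|\hes f-\frac g{2t}|^2+\Ric(\nabla f,\nabla f)\bigr)H$, the trace bound, and your inequality gives only
\[
t\,x'\ \ge\ -x+\frac2n\,x^2+2c\,(1-x)_+ .
\]
The right side equals $-1+\frac2n<0$ at $x=1$. So comparison only traps $x$ above the smaller root $x_*<1$, which yields $\Vol(B(p,R))\le CR^{n-2x_*}$, not $R^{n-2}$. To reach $R^{n-2}$ you would need the Ricci term to be of size $t(1-x)_+$.

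More tellingly, in the sharp model both sides of your inequality are exponentially small for large $t$. What actually keeps $x$ at $1$ there is the full Hessian term, $2t^2\int_M|\hes f-\frac g{2t}|^2H\to1$. That term comes from a deviation concentrated on two directions, and your trace bound $\frac2n x^2$ throws it away. So you are trying to extract the two units from the wrong term.

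The paper is built around exactly this point.
\begin{itemize}
\item It works with the pole-variable covariance \emph{tensor} $\mathsf G_x(t)$, with $\square_x\mathsf E=\frac12\mathsf Q$ and $\mathsf Q=4t^2D+|g-\mathsf G|^2+2t\langle\Ric,\mathsf G\rangle$. In the model, the two units come from $|g-\mathsf G|^2\to2$.
\item The first unit is the algebra of \autoref{lem:OneUnit}: $\Sc\ge1$ forces one lost direction.
\item The second unit comes from ruling out the alternative. That alternative is a single lost direction whose isolated lowest eigenline carries $\Ric\ge\frac12$ (\autoref{lem:Sub2}). The weighted Weitzenb\"ock estimate \autoref{lem:Weight_Wein}, together with $|\nabla\mathsf G|^2\le C_n\mathsf Q/t$, shows this configuration has small mass under a further heat average.
\item Because \autoref{thm:two-unit} holds only in this pole-averaged form, it is propagated through $\square_x$ by the minimal heat potential (\autoref{lem:MiniPotential}), not by an ODE at a fixed pole.
\end{itemize}
Your fixed-pole scalar framework has no room for that averaging. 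It would instead require a pointwise-in-time inequality that the paper neither proves nor needs.
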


The exponent $n-2$ in \eqref{eq:Main01} is sharp. In fact, for $n\ge 3$, let $M=S^{2}_{a}\times \mathbb{R}^{n-2}$ have the product metric, where $S_{a}^{2}$ denotes the round $2$-sphere of radius $a$ in $\mathbb{R}^{3}$. Let $\omega_{n-2}$ denote the Euclidean volume of the unit ball in $\mathbb{R}^{n-2}$. Then $\Ric\ge 0$, $\Sc=2/a^{2}$, and
\begin{equation*}
	\Vol(B(p, R))\sim 4\pi a^{2} \omega_{n-2} R^{n-2}, \quad R\to \infty.
\end{equation*}
We refer to this example as the \emph{sharp model}.

We sketch the main geometric idea behind the proof. Our guiding geometric picture is to interpret Gromov's conjectural exponent as a loss of two macroscopic dimensions. More precisely, the conjecture suggests that a complete manifold with $\Ric\ge 0$ and $\Sc\ge 1$ should behave, at large scales, as if its effective dimension were at most $n-2$. We find that this dimension loss can be captured by the heat flow. We use the pole-variable heat covariance and its trace defect:
\begin{equation*}
	\mathsf{G}_{x}(t)
	=2t\int_{M}d_{x}h_{x}\otimes d_{x}h_{x}\dd\mu_{x,t},
	\qquad \frac{2}{t}\mathsf{E}_{x}(t)=n-\tr_{g}\mathsf{G}_{x}(t).
\end{equation*}
More precisely, if $\Phi_{t}(x)=H(x,\cdot,t)\dd\Vol$ is the heat kernel map into probability densities and $g_{F}$ is the Fisher information metric, then 
\begin{equation*}
	(\Phi^{*}_{t}g_{F})_{x}(v, w)=
	\int_{M}d_{x}\log H(v) d_{x}\log H(w) H(x,y,t)\dd\Vol_{y},
\end{equation*}
and $\mathsf{G}=2t\Phi^{*}_{t}g_{F}$. This heat-kernel Fisher metric was studied in \cite{ISS2008}.

The trace of $\mathsf{G}_{x}(t)$ measures the number of directions visible to the heat flow at scale $\sqrt{t}$, while $2\mathsf{E}_{x}(t)/t$ measures the missing directions. This quantity vanishes on Euclidean space and tends to $2$ on the sharp model $S_{a}^{2}\times\mathbb{R}^{n-2}$. Under $\Ric\ge 0$, \autoref{prop:Gbounds} gives $0\le\mathsf{G}\le g$, so this defect is nonnegative. The proof is then organized around the propagation of $\mathsf{E}$ through two source equations:
\begin{equation*}
	\square_{x}\mathsf{E}=\frac{1}{2}\mathsf{Q},
	\qquad \square_{x}(-S)=\frac{\mathsf{E}}{t^{2}}.
\end{equation*}

The first identity is the centered form of the raw pole-variable production calculation proved in \autoref{prop:PolewiseDefectSource}.

The curvature assumptions give the following heat-averaged lower bound for the centered source 
\begin{equation*}
	P_{s}\bigl(\mathsf{Q}_{\bullet}(t)\bigr)(z)
	\ge 2-C_{n}\left(t^{-1}+s^{-1}\right)^{1/3},
\end{equation*}
as proved in \autoref{thm:two-unit}. The mechanism behind this estimate is that a region where $\mathsf{Q}<2-\varepsilon$ carries a distinguished lowest covariance eigenline with a definite amount of Ricci curvature; the weighted Weitzenb\"{o}ck estimate shows that such a region has small heat-averaged mass. Applying the minimal heat-potential comparison first to $\mathsf{E}$ and then to $-S$ propagates these two units through the two source equations and yields
\begin{equation*}
	S_{x}(t)\le-\log t+C_{n}.
\end{equation*}
Finally, the Li--Yau heat-kernel estimate converts this entropy decay into
\begin{equation*}
	\Vol(B(x,R))\le C_{n}R^{n-2},
\end{equation*}
which proves \autoref{thm:Main}.

Although the two pole-variable related expressions appeared at the formal level in some papers, they do not by themselves justify the present argument. Ni discusses the complete noncompact fixed-pole formulas in the section entitled ``Extensions and the value of $\mu(0)$'' \cite{Ni2004b}, while Bamler's relevant basepoint calculation is made for smooth compact Ricci flows settings, cf. \cite[Section~5]{Bam2020}. Under our hypotheses, Kotschwar's pole-gradient estimate, the Li--Yau Harnack and Gaussian bounds, and local parabolic estimates give uniform control of all pole-variable heat-kernel. This is used to prove that $\mathsf{G}$, $\mathsf{E}$, its centered Hessian variance $D$, and $\mathsf{Q}$ are smooth and finite on positive time. On a spectral-gap region the lowest-eigenline projection is then smooth by ordinary Riesz calculus. A smooth invariant matrix cutoff keeps the geometric argument entirely at the classical level. After this regularity step, the remaining noncompact issues are the entropy tails and heat-potential comparisons.

The paper is organized as follows. Section~2 recalls the Fisher interpretation of the heat covariance $\mathsf{G}$, introduces the trace defect $\mathsf{E}$ and the centered source $\mathsf{Q}$, and records their pointwise production equation. Section~3 contains the geometric core: a smooth invariant spectral cutoff isolates the lowest eigenline, and a weighted Weitzenb\"{o}ck estimate yields the two-unit estimate \autoref{thm:two-unit}. Section~4 recalls the Nash entropy and its pole-variable equation. In Section~5, the minimal heat-potential comparison propagates the source estimate first to $\mathsf{E}$ and then to $-S$, completing the proof together with the Li--Yau estimate. Appendix~A distinguishes pole-variable identities from fixed-pole output identities. Appendix B proves the uniform estimates of derivatives of heat kernel and the positive-time smoothness of $\mathsf{G}$, $\mathsf{E}$, $D$, and $\mathsf{Q}$.

\section{The heat flow quantities}
\subsection{Notations and conventions}
We always assume $(M^{n}, g)$ is a connected, smooth, complete, noncompact Riemannian manifold with $\Ric \ge 0$ and $\Sc\ge 1$. The dimension $n\ge 3$. We now fix the conventions. Our curvature convention is
\begin{equation*}
	R(X, Y)Z =\nabla_{X}\nabla_{Y}Z - \nabla_{Y}\nabla_{X}Z- \nabla_{[X, Y]}Z,
\end{equation*}
\begin{equation*}
	\Ric(X, X) = \sum_{i} \left\langle R(e_{i}, X) X, e_{i} \right\rangle.
\end{equation*}
If $A$ is a covariant symmetric two-tensor, then $A^{\sharp}$ is the self-adjoint endomorphism obtained by raising its first index using the metric. For two such tensors, we define:
\begin{equation*}
	\left\langle A, B \right\rangle = \tr (A^{\sharp}B^{\sharp}),\qquad |A|^{2}= \left\langle A, A \right\rangle.
\end{equation*}
We use the same letter for a symmetric tensor and its raised endomorphism when the type is clear. For example, if $\Pi$ is an orthogonal projection onto a linear subspace, then we write $\left\langle \Ric, \Pi \right\rangle = \tr (\Ric^{\sharp} \Pi)$.

For a linear map $T:V\to W$, where $V$ is a finite-dimensional inner-product space and $W$ is a Hilbert space, we distinguish the operator norm and the Hilbert--Schmidt norm:
\begin{equation*}
	\|T\|_{\mathrm{op}}
	:=\sup_{|v|=1}|Tv|,
	\qquad
	\|T\|_{\mathrm{HS}}^{2}
	:=\sum_{i=1}^{\dim V}|Te_{i}|^{2},
\end{equation*}
where $\{e_{i}\}$ is any orthonormal basis of $V$. For a covariant symmetric two-tensor $A$, these norms are understood through the associated self-adjoint endomorphism $A^{\sharp}$:
\begin{equation*}
	\|A\|_{\mathrm{op}}
	:=\|A^{\sharp}\|_{\mathrm{op}},
	\qquad
	|A|:=\|A^{\sharp}\|_{\mathrm{HS}}.
\end{equation*}
Thus, if $\lambda_{1},\ldots,\lambda_{n}$ are the eigenvalues of $A^{\sharp}$, then
\begin{equation*}
	\|A\|_{\mathrm{op}}=\max_{i}|\lambda_{i}|,
	\qquad
	|A|^{2}=\sum_{i=1}^{n}\lambda_{i}^{2}.
\end{equation*}
In particular, $|g|^{2}=n$ and $\|g\|_{\mathrm{op}}=1$. Unless an operator norm is explicitly indicated, all tensor norms below are Hilbert--Schmidt norms.

For a Borel set $U$, $\mathbf{1}_{U}$ is the indicator function, and $\supp(f)$ is the closed support of $f$.

The Laplacian is $\Delta = \diver \nabla$; hence $-\Delta$ is nonnegative on $L^{2}(M)$. The forward heat operator (the subscripts specify the spatial variable when needed) and its formal adjoint are denoted by
\begin{equation*}
	\square := \partial_{t}-\Delta,\quad \square_{x} := \partial_{t} - \Delta_{x},\quad \square^{*}_{x}:=-\partial_{t} - \Delta_{x}.
\end{equation*}
The minimal heat kernel of $M$ is denoted by 
\begin{equation*}
	H(x,y,t)=H_{x}(y, t)=e^{-h(x,y,t)}=e^{-h_{x}(y, t)}.
\end{equation*}
Under $\Ric\ge0$, the minimal heat kernel is unique, strictly positive, smooth, and symmetric. Therefore
\begin{equation*}
	H(x,y,t)=H(y,x,t),\quad \square_{x}H=\square_{y}H=0,
\end{equation*}
and $M$ is stochastically complete:
\begin{equation}\label{eq:Conservation}
	\int_{M}H_{x}(y,t)\dd\Vol_{y}=1.
\end{equation}
Therefore $\dd\mu_{x,t}(y)=H_{x}(y,t)\dd\Vol_{y}$ is a probability measure on $M$. The heat semigroup $P_{t}=e^{t\Delta}$ acts on a function via
\begin{equation*}
	P_{t}f(x) = \int_{M} H(x, y, t) f(y)\dd\Vol_{y}=\int_{M}f(y)\dd\mu_{x,t}(y).
\end{equation*}
In an expression like $P_{s}(F_{\bullet}(t))(z)$, the bullet denotes the spatial variable on which $P_{s}$ acts:
\begin{equation*}
	P_{s}(F_{\bullet}(t))(z) = \int_{M} H(z, x, s)F_{x}(t) \dd\Vol_{x}.
\end{equation*}
We call $x$ the \emph{pole variable} and $y$ the \emph{output variable}. Although the heat kernel is symmetric, we distinguish these roles explicitly. The notation $d_{x}h_{x}$ and $\hes_{x}h_{x}$ means that the derivatives are taken in the pole variable before $x$ is frozen; explicitly,
\begin{equation*}
	\hes_{x}h_{x}:=\nabla_{x}(d_{x}h_{x})
\end{equation*}
is a covariant symmetric two-tensor at $x$. A distributional identity $\square u =f$ means that
\begin{equation*}
	\int u \square^{*}\xi = \int f\xi
\end{equation*}
for every compactly supported smooth spacetime test function $\xi$.

\subsection{Heat covariance and trace defect}

For the heat kernel map $\Phi_{t}(x)=H(x,\cdot,t)\dd\Vol$, the Fisher information metric is
\begin{equation*}
	(\Phi_{t}^{*}g_{F})_{x}=
	\int_{M}d_{x}\log H\otimes d_{x}\log H\dd\mu_{x,t}.
\end{equation*}
Thus the tensor below is exactly $2t\Phi_{t}^*g_F$, the normalized Fisher pullback. Heat-kernel Fisher metrics of this form were studied in \cite{ISS2008}.

\begin{definition}\label{def:HeatCovariance}
	For fixed $x\in M$ and $t>0$, we set the \emph{heat covariance} or \emph{heat metric}:	
	\begin{equation}\label{eq:Covariance}
		\mathsf{G}_{x}(t):=2t\int_{M} d_{x}h \otimes d_{x}h \dd\mu_{x,t}.
	\end{equation}
\end{definition}
	The factor $2t$ is intrinsic to the normalization: on Euclidean space $\mathbb{R}^{n}$,
\[
	d_{x}h=\frac{x-y}{2t},
\]
and the Gaussian covariance gives $\mathsf{G}_{x}(t)=g_{x}$. Therefore $g_{x}-\mathsf{G}_{x}$ measures the loss, relative to the Euclidean space, of the normalized covariance at time $t$ at $x$.

\autoref{prop:Gbounds} proves that the integral in the definition of $\mathsf{G}$ is finite and bounded
\begin{equation}\label{eq:2:G_{t}emp_Bounds}
	0\le \mathsf{G}_{x}(t) \le g_{x};
\end{equation}
therefore all tensors defined from $\mathsf{G}$ below are ordinary finite tensors. We also set
\begin{equation}\label{eq:2:Excess}
	\mathsf{E}_{x}(t)=\frac{t}{2}(n-\tr_{g} \mathsf{G}_{x}(t)),
\end{equation}
which we call the pointwise heat-dimension defect. Its relation to earlier entropy formulas is summarized in Appendix A.

By \eqref{eq:2:G_{t}emp_Bounds},
\begin{equation}\label{eq:2:E_Bound}
	0\le \mathsf{E}_{x}(t)\le \frac{nt}{2}.
\end{equation}

\begin{example}
	To illustrate the defect $\mathsf{E}$, we calculate it for the sharp model $M=S_{a}^{2}\times \mathbb{R}^{n-2}$. Write $x=(p, z)$ and $y=(q, w)$. The product heat kernel is 
	\begin{equation*}
		H_{M}(x, y, t) = H_{S_{a}^{2}}(p, q, t)\ (4\pi t)^{\frac{-(n-2)}{2}}\exp \left( -\frac{|z-w|^{2}}{4t} \right).
	\end{equation*}
	Put $h_{S_{a}^{2}}=-\log H_{S_{a}^{2}}$. The mixed covariance term vanishes, and rotational symmetry of $S^{2}_{a}$ gives
	\begin{equation*}
		\mathsf{G}_{(p,z)}(t) = \lambda_{a}(t) g_{S_{a}^{2}} \oplus g_{\mathbb{R}^{n-2}},
	\end{equation*}
	where
	\[
		\lambda_{a}(t) = t\int_{S_{a}^{2}} H_{S_{a}^{2}}(p, q,t) |d_{p}h_{S_{a}^{2}}|^{2} \dd \Vol_{S_{a}^{2}}(q).
	\]
	Therefore $\lambda_{a}(t)\to 1$ as $t\downarrow0$. If $\nu_{1}$ is the first positive spherical eigenvalue, compact spectral convergence gives $H_{S^{2}_{a}}=\Vol(S^{2}_{a})^{-1}+O_{C^{2}}(e^{-\nu_{1}t})$ as $t\to\infty$, and hence $\lambda_{a}(t)=O(te^{-2\nu_{1}t})$ and $t\lambda_{a}'(t)=o(1)$. It follows that 
	\begin{equation*}
		\mathsf{G}^{\sharp}_{(p,z)}(t) \to 0_{T_{p}S_{a}^{2}} \oplus I_{\mathbb{R}^{n-2}}, \qquad t\to \infty.
	\end{equation*}
	Thus the large-time covariance retains precisely the $n-2$ Euclidean directions and loses the two spherical directions. In particular,
	\begin{equation*}
		\tr \mathsf{G}(t) \to n-2,\quad \Delta_{x} \mathsf{E}=0,\quad \square_{x}\mathsf{E}(t)\to 1.
	\end{equation*}
\end{example}

\subsection{Covariance bounds}
\begin{lemma}[\cite{BGL2014}]\label{lem:reverse}
	For every $t>0$, if $f\in C_{c}^{\infty}(M)$, then 
	\begin{equation}\label{eq:reversePoin1}
		P_{t}(f^{2}) - (P_{t}f)^{2} \ge 2t |\nabla P_{t}f|^{2}.
	\end{equation}
	If $f=c+\varphi$ is positive on $M$, where $c>0$ is a constant and $\varphi\in C_{c}^{\infty}(M)$, then
	\begin{equation}\label{eq:reverseLog}
		P_{t}(f\log f) - P_{t}f \log(P_{t}f) \ge t \frac{|\nabla P_{t}f|^{2}}{P_{t}f}.
	\end{equation}
\end{lemma}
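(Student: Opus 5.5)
We will derive both inequalities from the Bakry--Émery gradient estimate that holds under $\Ric\ge0$, namely
\[
|\nabla P_{s}u|^{2}\le P_{s}\bigl(|\nabla u|^{2}\bigr),
\]
combined with the standard semigroup interpolation. For $0\le s\le t$ and $f$ as in the statement, we set
\[
\psi(s)=P_{s}\bigl(\Lambda(P_{t-s}f)\bigr)(x),
\]
where $\Lambda(r)=r^{2}$ for \eqref{eq:reversePoin1} and $\Lambda(r)=r\log r$ for \eqref{eq:reverseLog}. Then $\psi(t)-\psi(0)$ is exactly the left-hand side of each inequality.

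Differentiating, and using $\partial_{s}P_{s}=\Delta P_{s}$ together with $\partial_{s}P_{t-s}f=-\Delta P_{t-s}f$, we get
\[
\psi'(s)=P_{s}\bigl(\Lambda''(P_{t-s}f)\,|\nabla P_{t-s}f|^{2}\bigr).
\]

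For the quadratic case, $\Lambda''=2$. Writing $P_{t}f=P_{s}(P_{t-s}f)$, the gradient estimate gives
\[
|\nabla P_{t}f|^{2}\le P_{s}\bigl(|\nabla P_{t-s}f|^{2}\bigr),
\]
so $\psi'(s)\ge 2|\nabla P_{t}f|^{2}$. Integrating over $[0,t]$ yields \eqref{eq:reversePoin1}.

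For the logarithmic case, $\Lambda''(r)=1/r$, so
\[
\psi'(s)=P_{s}\Bigl(\frac{|\nabla u|^{2}}{u}\Bigr),\qquad u=P_{t-s}f>0.
\]
Cauchy--Schwarz in the probability measure $d\mu_{x,s}$ gives
\[
\bigl(P_{s}|\nabla u|\bigr)^{2}\le P_{s}\Bigl(\frac{|\nabla u|^{2}}{u}\Bigr)\,P_{s}u .
\]
The gradient estimate in its $L^{1}$ form, $|\nabla P_{s}u|\le P_{s}|\nabla u|$, is also valid under $\Ric\ge0$. Combining the two and using $P_{s}u=P_{t}f$, we get
\[
\psi'(s)\ge \frac{|\nabla P_{t}f|^{2}}{P_{t}f}.
\]
Integrating over $[0,t]$ gives \eqref{eq:reverseLog}.

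The $L^{1}$ gradient bound $|\nabla P_{s}u|\le P_{s}|\nabla u|$ comes from Bochner's formula, $\Ric\ge0$, and the refined Kato inequality in the usual way, for instance via Bakry's $\Gamma_{2}$-criterion $\Gamma_{2}\ge0$.

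The main obstacle is justifying these manipulations on a noncompact manifold. The steps that need care are differentiating under $P_{s}$, the integration by parts implicit in $\partial_{s}P_{s}F=P_{s}\Delta F$, and the gradient commutation estimate itself.

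The plan is to use the hypotheses on $f$. Since $f\in C_{c}^{\infty}$ or $f=c+\varphi$, the functions $P_{t-s}f$ are smooth with bounded gradients: apply Li--Yau/Gaussian bounds to $\varphi$, and note that $P_{r}c=c$ by stochastic completeness \eqref{eq:Conservation}. In the logarithmic case, $P_{t-s}f\ge\inf f>0$, which keeps $\Lambda''$ bounded. Also $\Lambda(P_{t-s}f)-\Lambda(c)$ decays like the Gaussian tails of $P_{t-s}\varphi$. The integrations by parts then follow by a cutoff argument whose boundary terms vanish by these decay estimates.

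For the gradient estimate itself, the cleanest route is to cite the standard result for complete manifolds with $\Ric\ge0$ \cite{BGL2014}. That result rests on essential self-adjointness of $\Delta$ and the maximum principle applied to
\[
P_{s}\bigl(|\nabla P_{r-s}u|^{2}\bigr),
\]
which is nondecreasing in $s$ by Bochner's formula.
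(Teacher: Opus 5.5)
Your argument is correct and is the standard $\mathrm{CD}(0,\infty)$ semigroup-interpolation proof behind the results the paper simply cites (Theorems 4.7.2(iv) and 5.5.2(v) of \cite{BGL2014} at $\rho=0$): you interpolate $\psi(s)=P_s(\Lambda(P_{t-s}f))$, use the $L^{2}$ gradient bound in the Poincar\'e case, and use the $L^{1}$ gradient bound plus Cauchy--Schwarz in the logarithmic case, which yields exactly the coefficients $2t$ and $t$. One small correction: the $L^{1}$ bound $|\nabla P_s u|\le P_s|\nabla u|$ needs only the ordinary Kato inequality $|\nabla|\nabla u||\le|\hes u|$ together with Bochner and $\Ric\ge0$ (i.e.\ $\Gamma(\Gamma(u))\le 4\Gamma(u)\Gamma_2(u)$), not the refined Kato inequality, which holds only for harmonic functions.
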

\begin{proof}
	The condition $\Ric\ge 0$ is precisely $\mathrm{CD}(0,\infty)$. The reverse local Poincar\'{e} and logarithmic Sobolev inequalities under this condition are Theorems 4.7.2(iv) and 5.5.2(v) of \cite{BGL2014}. At curvature parameter $\rho=0$, their coefficients are $2t$ and $t$, respectively, giving \eqref{eq:reversePoin1} and \eqref{eq:reverseLog}.
\end{proof}

\begin{proposition}\label{prop:Gbounds}
	For every $x\in M$ and $t>0$, we have 
	\begin{equation}\label{eq:Gbounds01}
		0\le \mathsf{G}_{x}(t)\le g_{x},\quad \int_{M} d_{x}h_{x} \dd\mu_{x,t}(y) =0.
	\end{equation}
	For every unit vector $v\in T_{x}M$, and every $a\in \mathbb{R}$, $\ell_{v}(y) = d_{x}\log H (v)$. Then 
	\begin{equation}\label{eq:Gbounds02}
		\int_{M}e^{a\ell_{v}(y)} \dd\mu_{x,t}(y)\le e^{a^{2}/(4t)}.
	\end{equation}
	therefore,
	\begin{equation}\label{eq:Gbounds03}
		\int_{M}|d_{x}h_{x}|^{2} \dd\mu_{x,t}(y) \le \frac{n}{2t},\qquad \int_{M}|d_{x}h_{x}|^{4} \dd\mu_{x,t}(y) \le \frac{4n^{2}}{t^{2}}.
	\end{equation}
\end{proposition}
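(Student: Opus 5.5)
The proof dualizes the reverse inequalities of \autoref{lem:reverse}: test them against compactly supported functions, then read off the pole-variable gradient. Write $\ell_{v}(y)=d_{x}\log H(x,y,t)(v)=-d_{x}h(v)$. Then for $f\in C_{c}^{\infty}(M)$,
\[
	v\cdot\nabla P_{t}f(x)=\int_{M}f(y)\,\ell_{v}(y)\dd\mu_{x,t}(y).
\]
Differentiating under the integral is justified by the local uniform pole-gradient bounds (Kotschwar's estimate with Li--Yau Gaussian bounds, as in Appendix~B).

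\textbf{Centering.} Differentiating \eqref{eq:Conservation} in $x$ gives $\int_{M}\ell_{v}\dd\mu_{x,t}=0$. To make this rigorous, I take $f=\chi_{R}$, cutoffs increasing to $1$, so that $P_{t}\chi_{R}\to1$ locally uniformly with its gradient. Since $|\nabla\chi_{R}|\le 1/R$, the gradient estimate $|\nabla P_{t}\chi_{R}|\le C(t)\sup|\nabla \chi_{R}|\to0$ handles this. Equivalently, this follows from \eqref{eq:reversePoin1} applied to $1-\chi_{R}$.

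\textbf{Bound $\mathsf{G}\le g$.} Plug $f$ into \eqref{eq:reversePoin1} and use centering:
\[
	2t\Bigl(\int f\ell_{v}\dd\mu\Bigr)^{2}\le \int f^{2}\dd\mu-\Bigl(\int f\dd\mu\Bigr)^{2}\le \int f^{2}\dd\mu .
\]
By density, this holds for all $f\in L^{2}(\mu_{x,t})$. So the linear functional $f\mapsto\int f\ell_{v}\dd\mu$ has norm at most $(2t)^{-1/2}$ on $L^{2}(\mu)$. Taking truncations $f=\ell_{v}\mathbf 1_{\{|\ell_{v}|\le N\}\cap K}$ and letting $N,K\to\infty$ gives $2t\int\ell_{v}^{2}\dd\mu\le1$, that is, $\mathsf{G}(v,v)\le|v|^{2}$. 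Positivity $\mathsf{G}\ge0$ is immediate from the definition, and \eqref{eq:Gbounds01} follows.

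\textbf{Exponential bound \eqref{eq:Gbounds02}.} I use \eqref{eq:reverseLog} in the same dual way, via the Donsker--Varadhan variational formula. For positive $f$ with $\int f\dd\mu=1$, set $P_{t}f(x)=1$. Then \eqref{eq:reverseLog} gives
\[
	\Bigl(\int f\ell_{v}\dd\mu\Bigr)^{2}\le t^{-1}\,\mathrm{Ent}_{\mu}(f).
\]
Hence $a\int f\ell_{v}\dd\mu\le \frac{a^{2}}{4t}+\mathrm{Ent}_{\mu}(f)$ by AM--GM. Taking the supremum over densities $f$ and using the duality $\log\int e^{a\ell_{v}}\dd\mu=\sup_{f}\{a\int f\ell_{v}\dd\mu-\mathrm{Ent}_{\mu}(f)\}$ yields \eqref{eq:Gbounds02}. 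Here $\mathrm{Ent}_{\mu}(f)=\int f\log f\dd\mu$, since $\int f\dd\mu=1$.

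The class of admissible $f=c+\varphi$ is restrictive. I would approximate $f=e^{a\ell_{v}\wedge N}/Z$ on large balls by functions of the form $c+\varphi$ with small $c$, then pass to limits using monotone convergence. I expect this approximation, and the finiteness issues it raises, to be the main technical obstacle. The Gaussian upper bound for $H$ together with the pole-gradient bound $|\ell_{v}|\lesssim (1+d(x,y)/\sqrt t)/\sqrt t$ ensures every quantity stays integrable during the limits.

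\textbf{Moments \eqref{eq:Gbounds03}.} The first inequality is $\tr\mathsf{G}\le n$ divided by $2t$. For the fourth moment, sub-Gaussianity \eqref{eq:Gbounds02} with variance proxy $1/(2t)$ gives $\int\ell_{v}^{4}\dd\mu\le C/t^{2}$. Concretely, expanding $e^{a\ell}+e^{-a\ell}\le 2e^{a^{2}/4t}$ yields $\frac{a^{4}}{12}\int\ell^{4}\le 2e^{a^{2}/4t}-2-a^{2}\int\ell^{2}$, and choosing $a^{2}\sim t$ bounds $\int\ell_{v}^{4}\dd\mu\le 16/t^{2}$ or similar. With an orthonormal basis,
\[
	|d_{x}h|^{4}=\Bigl(\sum_{i}\ell_{e_{i}}^{2}\Bigr)^{2}\le n\sum_{i}\ell_{e_{i}}^{4},
\]
so $\int|d_{x}h|^{4}\dd\mu\le n^{2}\cdot 16/t^{2}\cdot\frac14$ after tuning. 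Getting exactly $4n^{2}/t^{2}$ requires optimizing the choice of $a$; this is routine.
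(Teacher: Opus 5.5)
Your proposal is correct and follows the paper's route: dualize the reverse Poincar\'e inequality to get $2t\int\ell_v^{2}\,d\mu\le 1$, run the Herbst/Gibbs-variational argument with the reverse log-Sobolev inequality on a truncated Gibbs density (the paper's smooth $u_N$ plays the role of your $e^{(a\ell_v)\wedge N}$), and then pass to the fourth moment. The differences are minor: you obtain centering from cutoffs plus a priori Kotschwar--Gaussian integrability, whereas the paper gets $\mu(\ell_v)=0$ and $\ell_v\in L^{2}(\mu)$ directly from the Riesz representative on $L^{2}_{0}(\mu)$; and you expand $\cosh$ instead of using a Chernoff tail bound with layer-cake. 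Your constants in that last step are loose as written (with $a^{2}=t$ you only get about $6.8/t^{2}$), but choosing $a^{2}=4t$ gives $\int\ell_v^{4}\,d\mu\le 3(e-1)/(2t^{2})<4/t^{2}$, which yields the stated $4n^{2}/t^{2}$.
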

\begin{proof}
	Fix $x\in M$ and $t>0$, and write $\mu=\mu_{x,t}$ and
	$\mu(f)=\int_{M} f\dd\mu$.

	\smallskip
	\noindent\emph{Step 1: Fisher tensor bound.}
	For $v\in T_{x}M$, set $\ell_v(y)=d_{x}\log H(x,y,t)(v)$. If
	$f\in C_c^\infty(M)$, then
	\begin{equation*}
		d(P_{t}f)_{x}(v)=\int_{M} f\ell_v\dd\mu.
	\end{equation*}
	By \eqref{eq:reversePoin1},
	\begin{equation*}
		|d(P_{t}f)_{x}(v)|^{2}\le \frac{|v|^{2}}{2t}
		\int_{M}|f-\mu(f)|^{2}\dd\mu.
	\end{equation*}
	The subspace
	\begin{equation*}
		\mathcal D=\{f-\mu(f):f\in C_c^\infty(M)\}
	\end{equation*}
	is dense in $L_{0}^{2}(\mu)=\{u\in L^{2}(\mu):\mu(u)=0\}$. Hence $L_v(f-\mu(f))=d(P_{t}f)_{x}(v)$ extends to a bounded functional on $L_{0}^{2}(\mu)$ with norm at most $|v|/\sqrt{2t}$. Let $r_v$ be its Riesz representative. For every $f\in C_c^\infty(M)$,
	\begin{equation*}
		\int_{M}fr_v\dd\mu=d(P_{t}f)_{x}(v)=\int_{M}f\ell_v\dd\mu.
	\end{equation*}
	Since $H>0$, $r_v=\ell_v$ almost everywhere. Consequently,
	\begin{equation}\label{eq:score-L2}
		\mu(\ell_v)=0,\qquad
		\int_{M}\ell_v^{2}\dd\mu\le\frac{|v|^{2}}{2t}.
	\end{equation}
	Thus $\mathsf{G}_{x}(v,v)=2t\int_{M}\ell_v^{2}\dd\mu\le|v|^{2}$, which
	proves \eqref{eq:Gbounds01}.

	\smallskip
	\noindent\emph{Step 2: Exponential score bound.}
	Let $|v|=1$ and abbreviate $\ell_v$ by $\ell$. For a bounded
	nonnegative function $F$, set
	\begin{equation*}
		\mathrm{Ent}_\mu(F)=\mu(F\log F)-\mu(F)\log\mu(F),
	\end{equation*}
	with $0\log0=0$. If $F\ge0$ is smooth and bounded and $\mu(F)>0$, choose $0\le\chi_{R}\le1$ in $C_c^\infty(M)$ with $\chi_R\to1$, and apply \eqref{eq:reverseLog} to $\varepsilon+\chi_{R} F$. Letting $R\to\infty$ and then $\varepsilon\downarrow0$ gives
	\begin{equation}\label{eq:score-reverse-entropy}
		\mathrm{Ent}_\mu(F)\ge \frac{t}{\mu(F)}\mu(F\ell)^{2}.
	\end{equation}
	Indeed, \eqref{eq:score-L2} and Cauchy--Schwarz give $\mu(\chi_{R} F\ell)\to\mu(F\ell)$, while the remaining terms converge by dominated convergence.

	For $a\in\mathbb{R}$ and $N\in\mathbb{N}$, define the smooth bounded truncation
	\begin{equation*}
		u_N=\frac{e^{a\ell}}{1+e^{a\ell-N}}.
	\end{equation*}
	Then $0<u_{N} \le e^{N}$, $\log u_{N}\le a\ell$, and $u_N\uparrow e^{a\ell}$. Moreover, $u_N\ell\in L^1(\mu)$ by \eqref{eq:score-L2}, and $u_{N}\log u_{N} \in L^1(\mu)$. Applying \eqref{eq:score-reverse-entropy} to $u_{N}$ and putting $q_{N}=\mu(u_{N}\ell)/\mu(u_{N})$ yields
	\begin{equation*}
		\log\mu(u_{N}) \le aq_{N}-t q_{N}^{2}\le\frac{a^{2}}{4t}.
	\end{equation*}
	Monotone convergence now proves \eqref{eq:Gbounds02}.

	\smallskip
	\noindent\emph{Step 3: Moment bounds.}
	Taking the trace of $\mathsf{G}_{x}(t)\le g_{x}$ gives the second-moment bound in \eqref{eq:Gbounds03}. For $\lambda>0$, Markov's inequality and \eqref{eq:Gbounds02}, optimized at $a=2t\lambda$, give 
	\begin{equation*}
		\mu\{\ell\ge\lambda\}\le e^{-t\lambda^{2}}.
	\end{equation*}
	Applying this to $-\ell$ and using the layer-cake formula gives
	\begin{equation*}
		\int_{M}|\ell|^4\dd\mu \le8\int_{0}^\infty\lambda^3e^{-t\lambda^{2}}\dd\lambda =\frac4{t^{2}}.
	\end{equation*}
	For an orthonormal basis $\{e_{i}\}_{i=1}^n$ of $T_{x}M$,
	\begin{equation*}
		|d_{x}h_{x}|^4\le n\sum_{i=1}^n |d_{x}\log H(x,y,t)(e_{i})|^4.
	\end{equation*}
	Integrating proves the fourth-moment bound in \eqref{eq:Gbounds03}.
\end{proof}

The quadratic estimate in \autoref{prop:Gbounds}, and hence $\mathsf{G}\le g$, is a classical consequence of the reverse Poincar\'{e} inequality. In the compact super Ricci-flow setting, the same bound is stated directly in \cite[Proposition~4.2]{Bam2020}. The exponential and fourth-moment bounds are retained because the latter enters the quantitative pole-gradient estimate \eqref{eq:Grad_G}.

\subsection{Local identity behind the defect source}

We next write the production density of $\mathsf{E}$ in a centered form adapted to the quantitative estimate in Section~3.

The raw local production identity is a rearrangement of Ni's pointwise $W$-density formula; see \cite[Lemma~2.2]{Ni2004b}. We record the calculation for fixed $y$ because the centered tensor variance decomposition that follows determines the expression for $\mathsf{Q}$ used later.

Since $\mathsf{E}$ is defined through $y$-integration, we begin with a fixed $y$ Bochner formula. The following calculation is the main reason for the definition of the parabolic production density of $\mathsf{E}$.
\begin{lemma}\label{lem:local-polewise}
	Fix $y\in M$, let $u=u(x,t)>0$ solve $\square_{x}u=0$ on a positive time cylinder domain, and put $h=-\log u$. All derivatives below are taken with respect to $x$. Then 
	\begin{equation}\label{eq:Bochner}
	\begin{aligned}
		\square & \left( t^{2}u\left( \frac{n}{2t} - |\nabla h|^{2} \right) \right) &\\
		&= 2t^{2}u \left( \left| \hes h - \frac{1}{2t}g \right|^{2} + \Ric (\nabla h, \nabla h) \right) + 2 t \diver (u \nabla h).
	\end{aligned}
	\end{equation}
\end{lemma}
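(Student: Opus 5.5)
The identity is a direct combination of the classical Bochner formula for $|\nabla u|^{2}/u$ with an algebraic expansion of the traceless-shifted Hessian. The plan is to compute the heat operator of each piece of the left-hand side separately and then match terms against the expanded right-hand side.

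\emph{Step 1: rewrite the left-hand side.} Since $u\nabla h=-\nabla u$, we have $u|\nabla h|^{2}=|\nabla u|^{2}/u=:v$. Hence the quantity inside $\square$ is
\begin{equation*}
	w=\tfrac{nt}{2}\,u-t^{2}v .
\end{equation*}
Using $\square u=0$ and the product rule in $t$ gives
\begin{equation*}
	\square w=\tfrac{n}{2}u-2tv-t^{2}\square v .
\end{equation*}

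\emph{Step 2: the Bochner formula for $v$.} This is the only genuine computation. For a positive solution of the heat equation, the standard calculation (Bochner's formula for $|\nabla u|^{2}$, together with the quotient rule) gives
\begin{equation*}
	\square\Bigl(\frac{|\nabla u|^{2}}{u}\Bigr)
	=-\frac{2}{u}\Bigl|\hes u-\frac{\nabla u\otimes\nabla u}{u}\Bigr|^{2}-\frac{2}{u}\Ric(\nabla u,\nabla u).
\end{equation*}
Since $\hes h=-\hes u/u+du\otimes du/u^{2}$, this equals $-2u\bigl(|\hes h|^{2}+\Ric(\nabla h,\nabla h)\bigr)$. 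Therefore
\begin{equation*}
	\square w=\tfrac{n}{2}u-2tu|\nabla h|^{2}+2t^{2}u\bigl(|\hes h|^{2}+\Ric(\nabla h,\nabla h)\bigr).
\end{equation*}
I expect the main (though routine) obstacle to be keeping the signs straight in this step. The sign convention $\Delta=\diver\nabla$ and $\square=\partial_{t}-\Delta$ must be applied consistently, and the cross terms $\langle\nabla u,\nabla|\nabla u|^{2}\rangle/u^{2}$ and $|\nabla u|^{4}/u^{3}$ must combine exactly into the square above.

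\emph{Step 3: expand the right-hand side and match.} Expanding the square, and using $|g|^{2}=n$ and $\langle\hes h,g\rangle=\Delta h$, gives
\begin{equation*}
	2t^{2}u\Bigl|\hes h-\tfrac{1}{2t}g\Bigr|^{2}=2t^{2}u|\hes h|^{2}-2tu\Delta h+\tfrac{n}{2}u .
\end{equation*}
Next we use two consequences of $h=-\log u$:
\begin{equation*}
	u\Delta h=-\Delta u+u|\nabla h|^{2},\qquad \diver(u\nabla h)=-\Delta u .
\end{equation*}
With these, the right-hand side of \eqref{eq:Bochner} becomes
\begin{equation*}
	2t^{2}u|\hes h|^{2}+2t\Delta u-2tu|\nabla h|^{2}+\tfrac{n}{2}u+2t^{2}u\Ric(\nabla h,\nabla h)-2t\Delta u .
\end{equation*}
The two $\Delta u$ terms cancel, and what remains coincides with the expression for $\square w$ from Step 2, which proves \eqref{eq:Bochner}.

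No integrability issues arise, since the identity is purely pointwise on the cylinder where $u>0$ is smooth.
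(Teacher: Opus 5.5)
Your proof is correct and follows essentially the same route as the paper. Both arguments reduce everything to $\square(u|\nabla h|^{2})=-2u\bigl(|\hes h|^{2}+\Ric(\nabla h,\nabla h)\bigr)$: you obtain it from the quotient-rule Bochner formula for $|\nabla u|^{2}/u$, while the paper uses the Bochner formula for $|\nabla h|^{2}$ together with $\square h=-|\nabla h|^{2}$ and the product rule. Both then close with $u(\Delta h-|\nabla h|^{2})=\diver(u\nabla h)=-\Delta u$.
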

\begin{proof}
	The proof is a direct calculation. $h$ satisfies the logarithmic heat equation 
	\begin{equation*}
		\square h = - |\nabla h|^{2}.
	\end{equation*}
	By Bochner identity and the product rule, we have
	\begin{equation*}
		\square |\nabla h|^{2} = - 2\left\langle \nabla h, \nabla |\nabla h|^{2} \right\rangle - 2 |\hes h|^{2}
		-2 \Ric(\nabla h, \nabla h),
	\end{equation*}
	\begin{equation*}
		\square(uf) = u \square f + f \square u - 2 \left\langle \nabla u, \nabla f \right\rangle.
	\end{equation*}
	Taking $f=|\nabla h|^{2}$ and using $\square u=0$ and $\nabla u=-u\nabla h$, we see that the first-order terms cancel, and
	\begin{equation*}
		\square(u|\nabla h|^{2}) = -2 u \left( |\hes h|^{2} + \Ric (\nabla h, \nabla h) \right).
	\end{equation*}
	It follows that
	\begin{equation*}
		\begin{aligned}
		 &\square \left( t^{2}u\left( \frac{n}{2t} - |\nabla h|^{2} \right) \right)\\
		 &= \frac{n}{2}u -2tu |\nabla h|^{2}
		 + 2t^{2}u \left( |\hes h|^{2} + \Ric (\nabla h, \nabla h) \right)\\
		 &= 2t^{2}u\left( \left| \hes h - \frac{1}{2t}g \right|^{2} 
		 + \Ric(\nabla h, \nabla h)\right) +2tu (\Delta h - |\nabla h|^{2}).
		\end{aligned}
	\end{equation*}
	Combining with
	\begin{equation*}
		u(\Delta h - |\nabla h|^{2}) 
		= u \Delta h + \left\langle \nabla u, \nabla h \right\rangle = \diver (u\nabla h),
	\end{equation*}
	we get \eqref{eq:Bochner}.
\end{proof}

Motivated by the preceding calcuation, we write its density in the following centered form:
\begin{definition}\label{def:D}
	For $x\in M$ and $t>0$, define the centered Hessian variance
	\begin{equation}\label{eq:D}
		D_{x}(t):=\int_{M} H(x, y, t)
		\left|\hes_{x}h_{x}(y,t)-\frac{\mathsf{G}_{x}(t)}{2t}\right|^{2} \dd\Vol_{y}.
	\end{equation}
\end{definition}

\begin{definition}\label{def:Q}
	For $x\in M$ and $t>0$, we define the \emph{defect source} or \emph{defect density}:
	\begin{equation}\label{eq:Q}
		\mathsf{Q}_{x}(t)=4t^{2}D_{x}(t) +|g_{x}-\mathsf{G}_{x}(t)|^{2}
		+2t\left\langle \Ric_{x}, \mathsf{G}_{x}(t) \right\rangle.
	\end{equation}
\end{definition}

The point is to interpret $\mathsf{E}$ as a heat-dimension defect and to center its production source in the form \eqref{eq:Q}. The key estimates are the heat-averaged two-unit lower bound for $\mathsf{Q}$ and its propagation to entropy decay. It is the crucial place that the two curvature conditions entering our estimates. 

\begin{proposition}[Smoothness of the heat quantities]\label{prop:HeatTensorRegularity}
	The tensor field $\mathsf{G}$ and the scalar fields $\mathsf{E}$, $D$, and $\mathsf{Q}$ are finite and smooth on $M\times(0,\infty)$. On every $K\Subset M$ and $0<a<b<\infty$, the output-variable integrands that define $\mathsf{G}$ and $D$, together with all their pole-space-time	derivatives, have an integrable majorant independent of $(x,t)\in K\times[a,b]$. Consequently, the pole-space-time	differentiations used below may be passed through the output integrals.
\end{proposition}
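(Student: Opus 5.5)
The plan is to reduce everything to uniform pointwise bounds on pole--space--time derivatives of $H$ on compact sets. Then dominated convergence lets differentiation pass under the output integral. The integrands defining $\mathsf{G}$ and $D$ are
\[
H\,d_xh\otimes d_xh=\frac{d_xH\otimes d_xH}{H},\qquad
H\Bigl|\hes_xh-\frac{\mathsf{G}}{2t}\Bigr|^{2},
\qquad \hes_xh=-\frac{\hes_xH}{H}+\frac{d_xH\otimes d_xH}{H^{2}}.
\]
Every pole-derivative of order $k$ (in $x$, with $\partial_t$ counted as two) of these is a finite sum of terms of the form $H^{1-m}\prod_{i=1}^{m}\nabla^{a_i}_x\partial_t^{b_i}H$. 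Equivalently, each term is $H$ times a polynomial in the normalized quantities $\nabla_x^{a}\partial_t^{b}H/H$. We also need $\mathsf{G}$ itself to be smooth, since it appears inside $D$; this comes first by the same argument. $\mathsf{E}$ and $\mathsf{Q}$ are then algebraic in $\mathsf{G}$, $D$, $\Ric$, $t$, hence smooth.

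\emph{Step 1: derivative bounds.} Fix $K\Subset M$ and $[a,b]$. I aim to show: for every $k$ there are $C,c$ depending on $K,a,b,k$ with
\[
\bigl|\nabla_x^{\alpha}\partial_t^{\beta}\log H(x,y,t)\bigr|\le C\bigl(1+d(x,y)\bigr)^{\alpha+2\beta},\qquad x\in K,\ t\in[a,b].
\]
For $|\nabla_x\log H|$ this is Kotschwar's pole-gradient estimate under $\Ric\ge0$: $t|\nabla\log H|^2\le C(1+ d^2/t)$, or the Hamilton/Li--Yau type bound $|\nabla \log H|^2\le \frac{C}{t}\log(\frac{C\,\Vol(B(y,\sqrt t))}{t^{n/2}H})$ combined with the Li--Yau Gaussian lower bound. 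For higher derivatives, apply interior parabolic Schauder estimates in the $x$ variable. These are done on parabolic cylinders $B(x,r)\times[t-r^2,t]$ with $r$ small, uniform for $x\in K$ since the geometry of a compact neighborhood is bounded. We apply them to $u=H(\cdot,y,\cdot)/H(x,y,t)$. The Harnack inequality (Li--Yau) gives $u\le C e^{C(1+d(x,y))}$ on the cylinder, and then $|\nabla^\alpha\partial_t^\beta H|\le C e^{C(1+d)}H$. A cruder exponential loss $e^{C d(x,y)}$ instead of the polynomial would also suffice.

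\emph{Step 2: integrable majorant.} Combining Step 1 with the Li--Yau Gaussian upper bound $H(x,y,t)\le C\,\Vol(B(x,\sqrt t))^{-1}e^{-d^2/(5t)}$, every term is dominated by $C e^{Cd(x,y)}e^{-d(x,y)^2/(5b)}\Vol(B(x,\sqrt a))^{-1}$, uniformly for $x\in K$ and $t\in[a,b]$. Bishop--Gromov gives $\Vol(B(x,r))\le C r^n$, so the volume growth is polynomial, and this majorant is integrable in $y$. For $D$, the term $\mathsf{G}/2t$ is bounded by $g/2t$ by \autoref{prop:Gbounds}. Its derivatives are bounded once $\mathsf{G}$ is known to be smooth, which we establish first. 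The Moser-type bounds in \autoref{prop:Gbounds} are not needed here but confirm finiteness.

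\emph{Step 3: conclusion.} Repeated differentiation under the integral sign, justified by dominated convergence with the Step 2 majorant, gives $C^\infty$ smoothness of $\mathsf{G}$, then of $D$, and then of $\mathsf{E},\mathsf{Q}$. Finiteness follows from the same bounds. The main obstacle is Step 1 for higher derivatives. Derivative bounds normalized by $H$ must hold uniformly as $y$ ranges over all of noncompact $M$, and the ratio $H(x',y,t')/H(x,y,t)$ over a cylinder must be controlled with only subgaussian growth in $d(x,y)$. I would try the Li--Yau Harnack inequality first. It gives $H(x',y,t')\le H(x,y,t)\,(t/t')^{n/2}\exp\bigl(d(x,x')^2/4(t-t')\bigr)$ only forward in time. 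We therefore need to pick the cylinder ending at a slightly later time and use the gradient bound to move spatially, with $t\ge a/2$ to keep constants uniform.
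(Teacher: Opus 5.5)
Your proposal is correct and is essentially the paper's argument. Both use Kotschwar's pole-gradient bound to move spatially on a time slice, the Li--Yau Harnack inequality at the fixed pole to move backward in time, and interior parabolic estimates on backward cylinders. Both then use the Li--Yau Gaussian bound with Bishop--Gromov for the output tail, treating $\mathsf{G}$ before $D$ and obtaining $\mathsf{E},\mathsf{Q}$ algebraically.

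The only technical difference is the cylinder radius. The paper shrinks it to $r_{0}/R(y)$, which makes the cylinder constant uniform and the relative jets polynomial in $R(y)$. Your fixed radius yields only an $e^{C(1+d(x,y))}$ loss, not the polynomial bound you state as the Step~1 target, but the Gaussian absorbs that loss equally well.

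No cylinder ending at a later time is needed. Bounding $H(x',y,t')$ first by $H(x,y,t')$ via the gradient bound, and then by $H(x,y,t)$ via backward Harnack at $x$, already controls the whole backward cylinder by its top value.
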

\begin{proof}
	The proof is given in \autoref{proof:HeatTensorRegularity}.
\end{proof}

\subsection{Pointwise defect source}

Once the output-variable tails of all pole derivatives are controlled, the preceding calculation is classical rather than merely distributional.

\begin{proposition}\label{prop:PolewiseDefectSource}
	For every $x\in M$ and $t>0$,
	\begin{equation}\label{eq:mean-score-hessian}
		\int_{M}H_{x}d_{x}h_{x}\dd\Vol_{y}=0,
		\qquad
		\int_{M}H_{x}\hes_{x}h_{x}\dd\Vol_{y}=\frac{\mathsf{G}_{x}(t)}{2t}.
	\end{equation}
	Moreover,
	\begin{equation}\label{eq:Box_E=Q}
		\square_{x}\mathsf{E}_{x}(t)=\frac{1}{2}\mathsf{Q}_{x}(t)
	\end{equation}
	pointwise, and hence also in the sense of distributions.
\end{proposition}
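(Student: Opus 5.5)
The plan is to differentiate the normalization identity \eqref{eq:Conservation} in the pole variable to get the two moment identities \eqref{eq:mean-score-hessian}. Then integrate the fixed-$y$ Bochner identity of \autoref{lem:local-polewise} over the output variable, and reorganize the result with a tensor variance decomposition. Every interchange of a pole-space-time derivative with an output integral is licensed by \autoref{prop:HeatTensorRegularity}, which supplies integrable majorants uniform on $K\times[a,b]$. So all identities below hold classically and pointwise; the distributional statement then follows by integrating against test functions.

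\emph{Step 1: moment identities.} Fix $t>0$. Differentiating $\int_M H(x,y,t)\dd\Vol_y=1$ in $x$ and using $d_xH=-H\,d_xh_x$ gives $\int_M H\,d_xh_x\dd\Vol_y=0$. Differentiating once more gives $\int_M \nabla_x d_x H\dd\Vol_y=0$. Since
\begin{equation*}
	\nabla_x d_x H=H\bigl(d_xh_x\otimes d_xh_x-\hes_xh_x\bigr),
\end{equation*}
we obtain $\int_M H\hes_xh_x\dd\Vol_y=\int_M H\,d_xh_x\otimes d_xh_x\dd\Vol_y=\mathsf{G}_x(t)/(2t)$ by \eqref{eq:Covariance}. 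This proves \eqref{eq:mean-score-hessian}; its first half also reproves \eqref{eq:Gbounds01}.

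\emph{Step 2: integrate the Bochner identity.} By \eqref{eq:2:Excess} and \eqref{eq:Covariance},
\begin{equation*}
	\mathsf{E}_x(t)=\int_M t^{2}H\Bigl(\frac{n}{2t}-|d_xh_x|^{2}\Bigr)\dd\Vol_y.
\end{equation*}
For each fixed $y$, $u=H(\cdot,y,\cdot)$ solves $\square_x u=0$, so \eqref{eq:Bochner} applies. Passing $\square_x$ inside the $y$-integral gives
\begin{equation*}
	\square_x\mathsf{E}=\int_M 2t^{2}H\Bigl(\Bigl|\hes_xh_x-\frac{g}{2t}\Bigr|^{2}+\Ric(d_xh_x,d_xh_x)\Bigr)\dd\Vol_y+2t\,\diver_x\!\int_M H\,d_xh_x\dd\Vol_y .
\end{equation*}
The divergence term vanishes by Step 1, after pulling $\diver_x$ outside the integral. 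The Ricci term equals $2t^{2}\langle\Ric,\mathsf{G}/(2t)\rangle=t\langle\Ric_x,\mathsf{G}_x\rangle$.

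\emph{Step 3: variance decomposition.} Write
\begin{equation*}
	\hes_xh_x-\frac{g}{2t}=\Bigl(\hes_xh_x-\frac{\mathsf{G}}{2t}\Bigr)+\frac{\mathsf{G}-g}{2t}.
\end{equation*}
The cross term integrates to zero against $H\dd\Vol_y$ by the Hessian identity of Step 1. Hence
\begin{equation*}
	\int_M H\Bigl|\hes_xh_x-\frac{g}{2t}\Bigr|^{2}\dd\Vol_y=D_x(t)+\frac{|g-\mathsf{G}|^{2}}{4t^{2}}.
\end{equation*}
Altogether,
\begin{equation*}
	\square_x\mathsf{E}=2t^{2}D+\tfrac12|g-\mathsf{G}|^{2}+t\langle\Ric,\mathsf{G}\rangle=\tfrac12\mathsf{Q},
\end{equation*}
by \eqref{eq:Q}.

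The only genuine obstacle is justifying the exchanges of $\partial_t$, $\Delta_x$ and $\diver_x$ with the output integrals on a noncompact manifold, where a priori the tails could contribute. This is exactly the content of \autoref{prop:HeatTensorRegularity}, so the plan is to cite it at each interchange rather than reprove it here. The algebra itself is routine once that regularity is in hand.
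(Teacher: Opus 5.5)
Your proposal is correct and follows the paper's proof essentially step for step. You differentiate \eqref{eq:Conservation} once and twice in the pole to obtain \eqref{eq:mean-score-hessian}, integrate \autoref{lem:local-polewise} in $y$ with the divergence term removed by the mean-zero score, and split $\int_M H|\hes_x h_x-g/(2t)|^2\dd\Vol_y$ by the variance identity, with every interchange justified by \autoref{prop:HeatTensorRegularity}; the only difference is that you prove the moment identities before integrating the Bochner identity, whereas the paper derives them in the middle of that computation.
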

\begin{proof}
	By \autoref{prop:HeatTensorRegularity}, all differentiations and 	output-variable integrations below are classical. We apply 	\autoref{lem:local-polewise} to $u(x,t)=H(x,y,t)$.

	First, by \eqref{eq:Conservation} and \eqref{eq:Covariance} we have
	\begin{equation*}
	\int_{M} t^{2} \left( \frac{n}{2t} - |d_{x}h_{x}|^{2} \right) H_{x}\dd\Vol_{y} 
	= t^{2}\left( \frac{n}{2t} - \frac{\tr \mathsf{G}_{x}(t)}{2t} \right) = \mathsf{E}_{x}(t).
	\end{equation*}

	Integrating \eqref{eq:Bochner} in the $y$-variable gives
	\begin{equation}\label{eq:Bochner02}
	\begin{aligned}
		\square_{x}\mathsf{E}_{x}(t) = & 2t^{2}\int_{M} \left( \left| \hes_{x}h_{x} - \frac{1}{2t}g_{x}\right|^{2} 
		+ \Ric_{x}(\nabla_{x}h_{x}, \nabla_{x}h_{x}) \right)\dd\mu_{x,t}\\
		&+ 2t \diver_{x} \int_{M} \nabla_{x}h_{x}\dd\mu_{x,t}.
	\end{aligned}
	\end{equation}
	Note that the divergence theorem cannot be applied to the last term, since the integration variable is $y$. Nevertheless, we have
	\begin{equation*}
	\int_{M}\nabla_{x}h_{x}\dd\mu_{x,t} = -\int_{M}\nabla_{x}H_{x} \dd\Vol_{y} = -\nabla_{x}\int_{M}H_{x}\dd\Vol_{y}=0.
	\end{equation*}
	This proves the first identity in \eqref{eq:mean-score-hessian}. Differentiating \eqref{eq:Conservation} twice gives
	\begin{equation*}
	0=\hes_{x}\int_{M}H_{x}\dd\Vol_{y} = \int_{M} H_{x}(d_{x}h_{x}\otimes d_{x}h_{x} - \hes_{x}h_{x}) \dd\Vol_{y},
	\end{equation*}
	and therefore
	\begin{equation}\label{eq:Bochner03}
	\int_{M} \hes_{x}h_{x} \dd\mu_{x,t} = \int_{M} d_{x}h_{x}\otimes d_{x}h_{x} \dd\mu_{x,t} = \frac{\mathsf{G}_{x}(t)}{2t}.
	\end{equation}
	This is the second identity in \eqref{eq:mean-score-hessian}. Define
	\begin{equation*}
	A_{y}:= \hes_{x}h_{x}(y, t),\quad \bar{A}:= \int_{M} A_{y} \dd\mu_{x,t}(y)=\frac{\mathsf{G}_{x}(t)}{2t}.
	\end{equation*}
	We view $A_{y}$ and $\bar{A}$ as elements in $\mathrm{Sym}^{2} (T^{*}_{x}M)$. Since $\mu_{x,t}$ is a probability measure, the variance identity gives:
	\begin{equation*}
	\begin{aligned}
		\int_{M}\left| A_{y} - \frac{g_{x}}{2t} \right|^{2}\dd \mu_{x,t} 
		&= \int_{M}|A_{y} - \bar{A}|^{2}\dd\mu_{x,t} + |\bar{A} - \frac{g_{x}}{2t}|^{2} \\
	&= \int_{M} \left| A_{y} - \frac{\mathsf{G}_{x}(t)}{2t} \right|^{2}\dd\mu_{x,t} + \frac{1}{4t^{2}} |g_{x} - \mathsf{G}_{x}(t)|^{2}.
	\end{aligned}
	\end{equation*}
	Similarly:
	\begin{equation*}
	\begin{aligned}
	4t^{2}\int_{M} \Ric_{x}(\nabla_{x}h_{x}, \nabla_{x}h_{x})\dd\mu_{x,t} 
	&= 4t^{2} \left\langle\Ric_{x}, \int_{M}d_{x}h_{x}\otimes d_{x}h_{x} \dd\mu_{x,t} \right\rangle\\
	&=2t \left\langle \Ric_{x}, \mathsf{G}_{x}(t) \right\rangle.
	\end{aligned}
	\end{equation*}

Consequently,
	\begin{equation}\label{eq:Bochner04}
	\begin{aligned}
	 2\square_{x}\mathsf{E}_{x}(t) &=4t^{2} \int_{M} \left( \left| \hes_{x}h_{x} - \frac{1}{2t}g_{x}\right|^{2} 
		+ \Ric_{x}(\nabla_{x}h_{x}, \nabla_{x}h_{x}) \right) \dd\mu_{x,t}\\
	 &= 4t^{2}\int_{M} \left( \left| \hes_{x}h_{x} - \frac{\mathsf{G}_{x}(t)}{2t}\right|^{2}\right) \dd\mu_{x,t}\\
	 &+ |g_{x}-\mathsf{G}_{x}(t)|^{2} + 2t \left\langle \Ric_{x}, \mathsf{G}_{x}(t) \right\rangle.
	\end{aligned}
	\end{equation}

	By \eqref{eq:D} and \eqref{eq:Q}, the last identity is precisely \eqref{eq:Box_E=Q}.
\end{proof}

\section{The two-unit defect source}

The following heat-averaged two-unit lower bound for the centered source $\mathsf{Q}$ is the key to our proof. It shows that the assumptions $\Ric\ge0$ and $\Sc\ge1$ force at least two units in the source after heat averaging.

\begin{theorem}[Two-unit defect]\label{thm:two-unit}
	There exists a constant $C_{n}<\infty$ such that for every $t\ge 1$, $s>0$, and $z\in M$,
	\begin{equation}\label{eq:two-unit}
		P_{s}(\mathsf{Q}_{\bullet}(t))(z) \ge 2 - C_{n}\left( t^{-1} + s^{-1} \right) ^{1/3}.
	\end{equation}
\end{theorem}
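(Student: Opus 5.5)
The plan is to combine a pointwise algebraic lower bound for $\mathsf{Q}$ with a heat-averaged estimate that rules out the single bad configuration. Throughout, write $\gamma_{1}\le\dots\le\gamma_{n}$ for the eigenvalues of $\mathsf{G}^{\sharp}_{x}(t)$, which lie in $[0,1]$ by \autoref{prop:Gbounds}, and $e_{i}$ for the eigenvectors.

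\emph{Step 1: pointwise algebra.} Dropping the nonnegative term $4t^{2}D$, we have
\begin{equation*}
\mathsf{Q}\ge\sum_{i}\Bigl((1-\gamma_{i})^{2}+2t\gamma_{i}\Ric(e_{i},e_{i})\Bigr).
\end{equation*}
For each fixed $i$, the one-variable minimum over $\gamma_{i}$ is close to $\min\{1,2t\Ric_{ii}\}$. Since $\sum_{i}\Ric_{ii}=\Sc\ge1$ and $t\ge1$, the right-hand side is at least $2-\varepsilon$ except in one configuration. In that configuration exactly one eigenvalue $\gamma_{1}$ is small, the corresponding eigenline carries almost all of the Ricci curvature, $\Ric(e_{1},e_{1})\ge c_{n}>0$, and the remaining eigenvalues are close to $1$. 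I will make this quantitative: on the set
\begin{equation*}
U_{\varepsilon}=\{x:\mathsf{Q}_{x}(t)<2-\varepsilon\},
\end{equation*}
I expect to obtain a spectral gap $\gamma_{2}-\gamma_{1}\ge c_{n}$, a Ricci bound $\langle\Ric,\Pi\rangle\ge c_{n}\varepsilon$ for the projection $\Pi$ onto the lowest eigenline, and $4t^{2}D<2$. On the region with a spectral gap, $\Pi$ is smooth by the Riesz projection formula applied to $\mathsf{G}^{\sharp}$ (using \autoref{prop:HeatTensorRegularity}). I will replace the indicator of $U_{\varepsilon}$ by a smooth invariant cutoff $\psi(\mathsf{G})$, so that every later integration is classical.

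\emph{Step 2: $\nabla\Pi$ is controlled by $D$.} Differentiate $\mathsf{G}$ in the pole variable; this is legitimate by \autoref{prop:HeatTensorRegularity}. The derivative $\nabla_{x}\mathsf{G}$ is a covariance of the form
\begin{equation*}
2t\int\bigl(\hes_{x}h\otimes d_{x}h+d_{x}h\otimes\hes_{x}h-d_{x}h\otimes d_{x}h\otimes d_{x}h\bigr)\dd\mu .
\end{equation*}
Subtracting the means from \eqref{eq:mean-score-hessian}, the Hessian factors become centered. Cauchy--Schwarz against the fourth-moment bound \eqref{eq:Gbounds03} should then give the pole-gradient estimate
\begin{equation*}
|\nabla\mathsf{G}|\le C_{n}\Bigl(t\sqrt{D}+t^{-1/2}\Bigr).
\end{equation*}
The symmetric third-moment part needs the exponential bound \eqref{eq:Gbounds02}, or a Kotschwar-type bound. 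Perturbation theory for the gapped eigenprojection then gives
\begin{equation*}
|\nabla\Pi|\le C_{n}\,\frac{|\nabla\mathsf{G}|}{\gamma_{2}-\gamma_{1}},
\end{equation*}
which is of order $t^{-1/2}$ on the cutoff region.

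\emph{Step 3: weighted Weitzenb\"ock estimate and optimisation.} For a local unit field $e$ spanning the eigenline, the Ricci identity reads
\begin{equation*}
\Ric(e,e)=\diver\bigl(\nabla_{e}e-(\diver e)e\bigr)+(\diver e)^{2}-\tr\bigl((\nabla e)^{2}\bigr).
\end{equation*}
Each term is quadratic in $e$, so the identity can be written invariantly in terms of $\Pi$ and is insensitive to the sign of $e$. Multiply by the weight $\psi(\mathsf{G}_{x})H(z,x,s)$ and integrate in $x$. The divergence term is moved onto the weight. Derivatives falling on $H$ are bounded by $\int|\nabla\log H|\dd\mu\lesssim s^{-1/2}$, using \eqref{eq:Gbounds03} with the roles of pole and output swapped via symmetry. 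Derivatives falling on $\psi$ are bounded by $|\nabla\mathsf{G}|$. This yields
\begin{equation*}
c_{n}\varepsilon\,P_{s}(\mathbf{1}_{U_{\varepsilon}})\lesssim t^{-1}+(st)^{-1/2}+\dots,
\end{equation*}
that is, the heat mass of the bad set is small. Outside the bad set we have $\mathsf{Q}\ge2-\varepsilon$, and inside it $\mathsf{Q}\ge0$. Hence
\begin{equation*}
P_{s}\mathsf{Q}\ge2-\varepsilon-2P_{s}(\mathbf{1}_{U_{\varepsilon}})\ge2-\varepsilon-C\varepsilon^{-2}\bigl(t^{-1}+s^{-1}\bigr).
\end{equation*}
The power $\varepsilon^{-2}$ comes from $1/\varepsilon$ in the Ricci lower bound and another $1/\varepsilon$ from the cutoff derivative. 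Choosing $\varepsilon=(t^{-1}+s^{-1})^{1/3}$ gives \eqref{eq:two-unit}.

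\emph{Main obstacle.} The hardest step is Step 3: showing that the integration-by-parts errors are genuinely small. Step 2 only bounds $4t^{2}D$ on $U_{\varepsilon}$ by $O(1)$, which gives $|\nabla\Pi|^{2}=O(t^{-1})$ rather than $o(t^{-1})$. That is only just enough against $\Ric(e,e)\ge c_{n}\varepsilon$, and only if the quadratic term is paired with the factor $t$ multiplying the Ricci term. If this fails, I would first try to keep the $4t^{2}D$ contribution inside $\mathsf{Q}$: use the quadratic term $(\diver e)^{2}-\tr((\nabla e)^{2})$, bounded by $C|\nabla\Pi|^{2}\le C'(D+t^{-3})$, and absorb it into $\mathsf{Q}$ itself rather than discarding it.
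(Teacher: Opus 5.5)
Your overall architecture matches the paper's: the one-eigenline algebra, an invariant spectral cutoff, $\nabla\mathsf{G}$ via the centered Hessian variance, the gapped eigenprojection, a weighted Weitzenb\"ock estimate with weight $w=H(z,\cdot,s)$, and the choice $\varepsilon=\rho^{1/3}$. The step that is supposed to produce smallness, however, fails as written. A cutoff $\psi(\mathsf{G})$ cannot see $D$. Its support therefore contains every point where $\mathsf{G}$ has the spectral shape of the bad set, including points where $D$ is arbitrarily large; there $\mathsf{Q}$, $|\nabla\mathsf{G}|$, $|\nabla\Pi|$ and $|\nabla\psi|$ are arbitrarily large too. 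The bound $4t^{2}D<1$ holds only on $\{\mathsf{Q}<2-\varepsilon\}$ itself. You cannot cut off in $\mathsf{Q}$ instead, since $\nabla\mathsf{Q}$ involves third pole derivatives of $h$. So your bound on $P_{s}(\mathbf{1}_{\{\mathsf{Q}<2-\varepsilon\}})(z)$ that does not depend on $P_{s}\mathsf{Q}$ is unjustified.

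The missing idea is to make the error term self-referential. By \autoref{lem:OneUnit}, for $t\ge1$ one has $4t^{2}D\le\mathsf{Q}-1$ and $\mathsf{Q}\ge1$ everywhere. Hence $|\nabla\mathsf{G}|^{2}\le C_{n}(tD+t^{-1})\le C_{n}\mathsf{Q}/t$ globally, and integrating against $w$ gives
\[
m:=P_{s}(\mathbf{1}_{\{\mathsf{Q}<2-\varepsilon\}})(z)\le A_{n}\varepsilon^{-2}\Bigl(\frac{Y}{t}+\frac{1}{s}\Bigr),\qquad Y:=P_{s}(\mathsf{Q}_{\bullet}(t))(z).
\]
You then close this against $Y\ge 2-\varepsilon-(1-\varepsilon)m$, or simply split into the cases $Y\ge2$ and $Y<2$. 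This needs the scaling $|\nabla\mathsf{G}|\lesssim\sqrt{tD}+t^{-1/2}$, which comes from Cauchy--Schwarz of the centered Hessian against the score second moment $\int|d_{x}h|^{2}\dd\mu\le n/(2t)$. Your $t\sqrt{D}$ only yields $|\nabla\mathsf{G}|^{2}\lesssim\mathsf{Q}$ and loses exactly the factor $1/t$ the argument relies on.

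Your Step 1 algebra is also wrong in ways that affect the exponent; write $\lambda_{i}$ for your $\gamma_{i}$.
\begin{itemize}
\item On $\{\mathsf{Q}<2-\varepsilon\}$ the gap is only of order $\varepsilon$, and $\lambda_{2}$ need not be near $1$. The choice $\lambda_{1}=0$, $\lambda_{2}=\varepsilon$, $\lambda_{i}=1$ for $i\ge3$, $\Ric=e_{1}\otimes e_{1}$, $D=0$ satisfies all your constraints and gives $\mathsf{Q}=2-2\varepsilon+\varepsilon^{2}$.
\item The one-eigenline picture needs $t\gtrsim1/\varepsilon$. At $t=1$, $\Ric=g/n$ and $\mathsf{G}=(1-\frac{1}{n})g$ give $\mathsf{Q}=2-\frac{1}{n}$ with no gap at all.
\item Conversely, the Ricci bound is much better than $c_{n}\varepsilon$. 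The inequality $\lambda_{2}\bigl(\Sc-\Ric(e_{1},e_{1})\bigr)\le\langle\Ric,\mathsf{G}\rangle<1/t$ gives $\langle\Ric,\Pi\rangle>1/2$ once $t\ge6/\varepsilon$ (\autoref{lem:Sub2}).
\end{itemize}
So the factor $\varepsilon^{-2}$ comes from $(\lambda_{2}-\lambda_{1})^{-2}$ in $|\nabla\Pi|^{2}$ and from $|\nabla\psi|^{2}$, not from the Ricci bound. With your Ricci bound and the true gap you would get $\varepsilon^{-3}$ and exponent $1/4$ instead of $1/3$.

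Finally, $\int w|\nabla\log w|^{2}\le n/(2s)$ is the output-variable Fisher bound of \autoref{lem:Fisher_Bound} (integrated Li--Yau). It does not follow from \eqref{eq:Gbounds03} by symmetry of $H$, because symmetry does not change which slot is integrated relative to which slot is differentiated.
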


\begin{remark}
	If $P_{s}(\mathsf{Q}_{\bullet}(t))(z)=+\infty$, then \eqref{eq:two-unit} holds automatically. Thus only the finite weighted-energy case requires an estimate.
\end{remark}

\subsection{The first unit and the spectral gap}

	We now turn to the geometric side of the proof. The scalar-curvature lower bound first forces one unit of the defect source. If $\mathsf{Q}_{x}(t)$ remains below $2-\varepsilon$ for some $0<\varepsilon\le 1/2$ and $t\ge 6/\varepsilon$, then the heat metric has a unique, quantitatively separated lowest eigenline, and most of the Ricci curvature is concentrated in that direction.

We set up the notation needed in this section. For a fixed $t>0$, let
\begin{equation*}
	0\le \lambda_{1}(x, t)\le \cdots\le \lambda_{n}(x, t)\le 1
\end{equation*}
be the ordered eigenvalues of the self-adjoint endomorphism 
\[
	\mathsf{G}_{x}(t)^{\sharp}: T_{x}M\to T_{x}M.
\]
We often write them as $\lambda_{i}$. We set
\begin{equation*}
	\mathcal{U}_{\min}(t):= \left\{ x\in M\ : \ \lambda_{1}<\lambda_{2} \right\}.
\end{equation*}
For $x\in \mathcal{U}_{\min}(t)$, define
\begin{equation*}
	L_{x}(t)=\ker (\mathsf{G}_{x}(t)^{\sharp}-\lambda_{1}(x, t)I),
\end{equation*}
and
\begin{equation*}
	\Pi_{x}(t):=\mathrm{proj}_{L_{x}(t)}\in \mathrm{End}(T_{x}(M)).
\end{equation*}

Thus $L_{x}(t)$ is the eigenspace corresponding to the lowest eigenvalue. It is a smooth line bundle on $\mathcal U_{\min}(t)$, and $\Pi(t)$ is a smooth section of $\End(TM)$ there, as shown in \autoref{prop:SpecProj_Regularity}. If $\eta$ is a local smooth unit section of $L(t)$, then
\begin{equation*}
	\Pi=\eta\otimes \eta^{\flat},\quad
	\left\langle \Ric, \Pi \right\rangle = \Ric(\eta, \eta),
\end{equation*}
and $\Pi$ is independent of the choice of $\eta$. Its covariant derivative is determined by
\begin{equation*}
	(\nabla_{X}\Pi)(V):= \nabla_{X}(\Pi(V))- \Pi(\nabla_{X}(V)),\quad |\nabla \Pi|^{2}:= \sum_{i}\|\nabla_{e_{i}}\Pi\|^{2}_{\mathrm{HS}}.
\end{equation*}

The first unit of $\mathsf{Q}$ is purely algebraic.

\begin{lemma}\label{lem:OneUnit}
	Let $A\ge 0$ and $G$ be two self-adjoint endomorphisms of $\mathbb{R}^{n}$, with $\tr A\ge 1$ and $0\le G\le I$. Then for every $t \ge 1$,
	\begin{equation*}
		|I- G|^{2} + 2t \left\langle A, G \right\rangle \ge 1.
	\end{equation*}
	Applied to $A=\Ric^{\sharp}$ and $G= \mathsf{G}(t)$, this gives
	\begin{equation*}
		0\le 4t^{2}\int_{M} \left|\hes_{x}h_{x}- \frac{\mathsf{G}_{x}(t)}{2t} \right|^{2}\dd \mu_{x,t}(y)\le \mathsf{Q}-1.
	\end{equation*}
\end{lemma}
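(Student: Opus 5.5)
The plan is to work in an orthonormal eigenbasis of $G$ and reduce the matrix inequality to a scalar one. Write $G=\sum_i \lambda_i\, e_i\otimes e_i$ with $0\le\lambda_i\le 1$, and set $a_i:=\langle Ae_i,e_i\rangle$. Since $A\ge 0$ we have $a_i\ge 0$, and $\sum_i a_i=\tr A\ge 1$. In this basis
\begin{equation*}
	|I-G|^{2}=\sum_{i}(1-\lambda_i)^{2},\qquad \langle A,G\rangle=\tr(AG)=\sum_i a_i\lambda_i .
\end{equation*}
Only the diagonal entries of $A$ enter, so the off-diagonal part of $A$ plays no role.

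The scalar estimate then goes as follows. Put $\lambda_{\max}:=\max_i\lambda_i$. There are two cases.
\begin{itemize}
	\item If $\lambda_{\max}\le 1/2$, then every term $(1-\lambda_i)^2$ is at least $1/4$. Hence $|I-G|^2\ge n/4\ge 3/4$ for $n\ge 3$. This alone is not enough, so I will not split into cases and will instead bound term by term.
	\item For each $i$ and $t\ge 1$,
	\begin{equation*}
		(1-\lambda_i)^2+2t\,a_i\lambda_i \;\ge\; (1-\lambda_i)^2+2a_i\lambda_i .
	\end{equation*}
	The trouble is that the $a_i$ are not individually at least $1$. I will therefore use a single weighted index instead.
\end{itemize}
The cleaner route uses $\sum_i(1-\lambda_i)^2\ge \sum_i a_i'(1-\lambda_i)^2$ with the weights $a_i':=a_i/\tr A\in[0,1]$, which satisfy $\sum_i a_i'=1$. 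Then
\begin{equation*}
	|I-G|^2+2t\langle A,G\rangle\;\ge\;\sum_i a_i'\Bigl[(1-\lambda_i)^2+2t(\tr A)\lambda_i\Bigr]\;\ge\;\sum_i a_i'\bigl[(1-\lambda_i)^2+2\lambda_i\bigr],
\end{equation*}
using $t\ge1$ and $\tr A\ge1$. Finally $(1-\lambda)^2+2\lambda=1+\lambda^2\ge 1$, and the weights sum to $1$, which gives the claimed lower bound $1$. This is the step I would expect to be the only subtle point: one must realize that $|I-G|^2$ dominates a convex combination of its diagonal terms because each $a_i'\le 1$. Once seen, the rest is immediate.

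For the application, take $A=\Ric_x^{\sharp}$ and $G=\mathsf{G}_x(t)^\sharp$. The hypotheses hold: $\Ric\ge0$ gives $A\ge0$, $\Sc\ge1$ gives $\tr A\ge 1$, and \autoref{prop:Gbounds} gives $0\le G\le I$. Then \eqref{eq:Q} together with \eqref{eq:D} gives
\begin{equation*}
	\mathsf{Q}_x(t)-4t^2D_x(t)=|g_x-\mathsf{G}_x(t)|^2+2t\langle\Ric_x,\mathsf{G}_x(t)\rangle\ge 1.
\end{equation*}
This is exactly the displayed two-sided bound. The lower bound $0\le 4t^2D_x(t)$ holds trivially since $D$ is an integral of squared norms against a probability measure, and $D$ is finite by \autoref{prop:HeatTensorRegularity}.
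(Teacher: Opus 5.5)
Your argument is correct, and the application to $\mathsf{Q}$ is the same as the paper's. In the chain
\[
|I-G|^{2}+2t\langle A,G\rangle\ge\sum_i a_i'\bigl[(1-\lambda_i)^2+2t(\tr A)\lambda_i\bigr]\ge\sum_i a_i'(1+\lambda_i^2)\ge 1,
\]
the first step holds because $a_i'\in[0,1]$, and the second because $\lambda_i\ge0$, $t\ge1$ and $\tr A\ge1$.

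The scalar reduction is organized differently from the paper's. You average over all eigenvalues of $G$, using the diagonal of $A$ in the $G$-eigenbasis as convex weights. The paper instead singles out the smallest eigenvalue $\lambda_1$ of $G$:
\begin{itemize}
\item $G\ge\lambda_1 I$ and $A\ge0$ give $\langle A,G\rangle\ge\lambda_1\tr A\ge\lambda_1$;
\item keeping one term gives $|I-G|^2\ge(1-\lambda_1)^2$;
\item hence the sum is at least $(1-\lambda_1)^2+2t\lambda_1\ge1$.
\end{itemize}

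Both routes end in the scalar fact $(1-\lambda)^2+2\lambda=1+\lambda^2\ge1$. The paper's version is a line shorter, so the step you call the ``subtle point'' is avoidable: once $\langle A,G\rangle\ge\lambda_{\min}(G)\tr A$ is noted, no weighting is needed. The paper's two ingredients, $\lambda_1\le\langle\Ric,\mathsf{G}\rangle$ and the one-term bound on $|I-G|^2$ (strengthened there to $(1-\lambda_1)^2+(1-\lambda_2)^2$), are reused almost verbatim in \autoref{lem:Sub2} to get the spectral gap. Your diagonal-weight viewpoint reappears only in that lemma's last step, $\langle\Ric,\mathsf{G}\rangle\ge\lambda_2(\Sc-\Ric(v,v))$.

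Finally, delete the two abandoned bullet items before the actual argument. They are true, but they are dead ends and make the write-up confusing.
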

\begin{proof}
	Let $\lambda_{1}$ be the smallest eigenvalue of $G$. Since $G\ge \lambda_{1}I$, $A\ge 0$ and $\tr A\ge 1$, we have:
	\begin{equation*}
		\left\langle A,G \right\rangle \ge \lambda_{1} \tr A \ge \lambda_{1},
	\end{equation*}
	while $|I-G|^{2}\ge (1-\lambda_{1})^{2}$. Therefore
	\begin{equation*}
		|I-G|^{2}+2t \left\langle A,G \right\rangle \ge (1-\lambda_{1})^{2} + 2t\lambda_{1} \ge 1.
	\end{equation*}
	For $A=\Ric^{\sharp}$, the hypotheses follow from $\Ric\ge 0$, $\Sc\ge 1$, and $0\le \mathsf{G}\le g$.
\end{proof}

\begin{lemma}\label{lem:Sub2}
	Fix $0<\varepsilon \le 1/2$ and suppose $t\ge 6/\varepsilon$ and $\mathsf{Q}\le 2-\varepsilon$. Then
	\begin{equation}\label{eq:sub201}
		\lambda_{1}< \frac{\varepsilon}{6}, \quad 
		\lambda_{2}> \frac{\varepsilon}{3}, \quad 
		\lambda_{2}-\lambda_{1} > \frac{\varepsilon}{6}.
	\end{equation}
	Let $\Pi$ be the rank-one projection onto the $\lambda_{1}$ eigenline. Then
	\begin{equation}\label{eq:sub202}
		\left\langle \Ric, \Pi \right\rangle > \frac{1}{2}.
	\end{equation}
\end{lemma}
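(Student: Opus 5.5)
The plan is to discard the nonnegative Hessian-variance term in \eqref{eq:Q} and argue purely algebraically, as in \autoref{lem:OneUnit}. The one refinement is that we keep track of \emph{all} eigenvalues rather than only $\lambda_{1}$. Since $4t^{2}D\ge 0$, the hypothesis $\mathsf{Q}\le 2-\varepsilon$ gives
\begin{equation*}
	2-\varepsilon\ge |g-\mathsf{G}|^{2}+2t\langle \Ric,\mathsf{G}\rangle .
\end{equation*}
Choose an orthonormal eigenbasis $e_{1},\dots,e_{n}$ of $\mathsf{G}^{\sharp}$ with eigenvalues $\lambda_{1}\le\dots\le\lambda_{n}$, and put $r_{i}=\Ric(e_{i},e_{i})$. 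Then $r_{i}\ge 0$ by $\Ric\ge0$, and $\sum_{i} r_{i}=\Sc\ge1$. We also have
\begin{equation*}
	|g-\mathsf{G}|^{2}=\sum_{i}(1-\lambda_{i})^{2},\qquad \langle \Ric,\mathsf{G}\rangle=\sum_{i}\lambda_{i}r_{i}.
\end{equation*}
Every claim will come from this inequality by dropping selected nonnegative terms.

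\emph{Bound on $\lambda_{1}$.} Using $\langle\Ric,\mathsf{G}\rangle\ge\lambda_{1}\Sc\ge\lambda_{1}$ and keeping only the $i=1$ term of the first sum, we get
\begin{equation*}
	2-\varepsilon\ge(1-\lambda_{1})^{2}+2t\lambda_{1}\ge 1+2(t-1)\lambda_{1}.
\end{equation*}
Hence $\lambda_{1}<1/(2(t-1))$. Since $t\ge 6/\varepsilon$ and $\varepsilon\le1/2$, this gives $\lambda_{1}\le \varepsilon/(2(6-\varepsilon))<\varepsilon/6$.

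\emph{Bound on $\lambda_{2}$.} Suppose $\lambda_{2}\le\varepsilon/3$. Keep the $i=1,2$ terms and the same lower bound $2t\lambda_{1}$ for the curvature term. This gives
\begin{equation*}
	2-\varepsilon\ge 2-2\lambda_{2}+\lambda_{2}^{2}+\lambda_{1}^{2}+2(t-1)\lambda_{1}\ge 2-\tfrac{2\varepsilon}{3},
\end{equation*}
which is a contradiction. So $\lambda_{2}>\varepsilon/3$, and therefore $\lambda_{2}-\lambda_{1}>\varepsilon/3-\varepsilon/6=\varepsilon/6$. In particular $x\in\mathcal U_{\min}(t)$, $\Pi$ is well defined, and $\Pi=e_{1}\otimes e_{1}^{\flat}$, so $\langle\Ric,\Pi\rangle=r_{1}$.

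\emph{Ricci concentration, \eqref{eq:sub202}.} Now bound the curvature term through the upper eigenvalues:
\begin{equation*}
	\sum_{i}\lambda_{i}r_{i}\ge\lambda_{2}\sum_{i\ge2}r_{i}\ge\lambda_{2}(1-r_{1})>\tfrac{\varepsilon}{3}(1-r_{1}),
\end{equation*}
where we used $r_{1}\le1$ (otherwise \eqref{eq:sub202} is trivial). Together with $(1-\lambda_{1})^{2}\ge1-2\lambda_{1}>1-\varepsilon/3$ and $2t\varepsilon/3\ge4$, this yields
\begin{equation*}
	2-\varepsilon> 1-\tfrac{\varepsilon}{3}+4(1-r_{1}).
\end{equation*}
If $r_{1}\le1/2$, the right-hand side is at least $3-\varepsilon/3>2-\varepsilon$, a contradiction. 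Hence $\langle\Ric,\Pi\rangle>1/2$.

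There is no serious obstacle here. The only delicate point is the case $\lambda_{1}=0$ in the $\lambda_{2}$ bound, where one must get strictness from $2-2\lambda_{2}$ rather than from the curvature term. The argument above handles this case uniformly.
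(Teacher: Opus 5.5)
Your proof is correct and follows essentially the paper's argument. Both drop $4t^{2}D$ and diagonalize $\mathsf{G}$; then both bound $\lambda_{1}$ via $\langle\Ric,\mathsf{G}\rangle\ge\lambda_{1}\Sc$, bound $\lambda_{2}$ from the two smallest-eigenvalue terms of $|g-\mathsf{G}|^{2}$, and get the Ricci concentration from $\langle\Ric,\mathsf{G}\rangle\ge\lambda_{2}(\Sc-\Ric(e_{1},e_{1}))$. The only difference is that you keep the metric-defect and curvature terms together at each step, whereas the paper first uses $\lambda_{1}\le(2-\varepsilon)/(2t)$ and then $|g-\mathsf{G}|^{2}\le 2-\varepsilon$ alone; this gives you slightly sharper constants (your display for $\lambda_{2}$ directly gives $\lambda_{2}\ge\varepsilon/2$, with no contradiction needed), and note that the strict inequality $\lambda_{2}(1-r_{1})>\frac{\varepsilon}{3}(1-r_{1})$ fails at $r_{1}=1$, which is harmless since you only use it when $r_{1}\le 1/2$.
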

\begin{proof}
	Since $\mathsf{G}\ge \lambda_{1}I$, $\Ric \ge 0$, and $\Sc \ge 1$, we have:
	\begin{equation*}
		\lambda_{1} \le \left\langle \Ric, \mathsf{G} \right\rangle \le \frac{2-\varepsilon}{2t}=:a.
	\end{equation*}
	The time assumption gives
	\begin{equation*}
		\lambda_{1} \le a < \frac{\varepsilon}{6}.
	\end{equation*}
	The metric defect term gives:
	\begin{equation*}
		(1-\lambda_{1})^{2} + (1-\lambda_{2})^{2} \le |I -G|^{2} \le 2 - \varepsilon.
	\end{equation*}
	Since $\lambda_{1}\le a$, we have:
	\begin{equation*}
		(1-\lambda_{2})^{2}\le 2-\varepsilon - (1-a)^{2}=1-(a^{2}-2a+\varepsilon).
	\end{equation*}
	Note $r:=a^{2}-2a+\varepsilon$ lies in $(0, 1)$, and $1-\sqrt{1-r}\ge r/2$. Moreover $a<\varepsilon/6$, so
	\begin{equation*}
		r > \varepsilon-\varepsilon/3=2\varepsilon/3.
	\end{equation*}
	Therefore
	\[
		\lambda_2\ge 1-\sqrt{1-r}\ge r/2>\varepsilon/3.
	\]
	Thus $\lambda_{2}-\lambda_{1} > \varepsilon/6$, so $\Pi$ is well defined. Now choose a $\mathsf{G}$-eigenbasis beginning with a unit vector $v$ in the lowest eigenspace. Positivity of Ricci gives
	\begin{equation*}
		\left\langle \Ric, \mathsf{G} \right\rangle \ge \lambda_{2} (\Sc - \Ric (v, v)).
	\end{equation*}
	Since $\left\langle \Ric, \mathsf{G} \right\rangle <1/t$ and $\lambda_{2}> \varepsilon/3$, we have
	\begin{equation*}
		\Sc - \Ric(v, v) < \frac{1}{t\lambda_{2}} < \frac{3}{\varepsilon t} \le \frac{1}{2}.
	\end{equation*}
	Since $\Sc\ge1$, it follows that
	$\Ric(v,v)>\Sc-1/2\ge1/2$. As
	$\Ric(v,v)=\left\langle \Ric,\Pi\right\rangle$, this proves
	\eqref{eq:sub202}.
\end{proof}

\subsection{Covariance and eigenprojection estimates}
Our next goal is to study the sublevel set:
\begin{equation*}
	\Omega_{\varepsilon}(t):=\left\{x\in M:\mathsf{Q}_{x}(t)<2-\varepsilon\right\},
\end{equation*}
which we call the ``bad region.'' By \autoref{lem:Sub2}, every point in $\Omega_{\varepsilon}(t)$ determines a unique lowest covariance eigenline, separated from the remaining eigenspaces by a definite spectral gap, and this eigenline carries a definite amount of Ricci curvature. The bad region need not be empty, and we do not attempt to exclude it pointwise. Instead, we construct a spectral cutoff and show that its mass under a later heat average is quantitatively small. In this sense, the bad region becomes invisible to the heat flow at large scales.

\begin{proposition}\label{prop:SpecProj_Regularity}
	For every $x\in M$ and $t>0$,
	\begin{equation}\label{eq:Grad_G}
		|\nabla \mathsf{G}_{x}(t)|^{2}\le C_{n}(tD_{x}(t)+t^{-1}).
	\end{equation}
	For a fixed $t>0$, if $U$ is an open subset of $M$ on which $\lambda_2-\lambda_{1}>\delta>0$, then the projection $\Pi(t)$ onto the smallest eigenspace is smooth on $U$ and
	\begin{equation}\label{eq:Grad_{p}i}
		|\nabla \Pi|\le \frac{1}{\delta} |\nabla \mathsf{G}|,
	\end{equation}
	pointwise on $U$.
\end{proposition}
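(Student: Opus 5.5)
The plan has two parts: a direct differentiation of $\mathsf{G}$ through the output integral, and standard perturbation theory for an isolated eigenvalue. By \autoref{prop:HeatTensorRegularity} we may differentiate under the integral. Write $\ell=d_{x}h_{x}$ and $A=\hes_{x}h_{x}$, so that $\nabla_{x}H_{x}=-H_{x}\ell$. For a vector $X\in T_{x}M$,
\begin{equation*}
	\nabla_{X}\mathsf{G}=2t\int_{M}\bigl(A(X,\cdot)\otimes\ell+\ell\otimes A(X,\cdot)-\ell(X)\,\ell\otimes\ell\bigr)\dd\mu_{x,t}.
\end{equation*}

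The key step is to center this expression before estimating it. By \eqref{eq:mean-score-hessian} we have $\int\ell\dd\mu_{x,t}=0$, so in the first two terms we can replace $A$ by $A-\mathsf{G}/(2t)$ without changing the integral. For the cubic term, the plan is to use the identity $\nabla_{X}\int H\,\ell\otimes\ell=\int H(\ldots)$ directly rather than trying to cancel it. Cauchy--Schwarz then bounds the centered Hessian terms by
\begin{equation*}
	2t\Bigl(\int|A-\mathsf{G}/2t|^{2}\dd\mu_{x,t}\Bigr)^{1/2}\Bigl(\int|\ell|^{2}\dd\mu_{x,t}\Bigr)^{1/2}\le 2t\,D^{1/2}(n/2t)^{1/2}.
\end{equation*}
Squaring gives a contribution $C_{n}tD$. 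The cubic term is bounded using the fourth moment in \eqref{eq:Gbounds03}:
\begin{equation*}
	2t\int|\ell|^{3}\dd\mu_{x,t}\le 2t\Bigl(\int|\ell|^{4}\Bigr)^{1/2}\Bigl(\int|\ell|^{2}\Bigr)^{1/2}\le 2t\,(2n/t)(n/2t)^{1/2}=C_{n}t^{-1/2}.
\end{equation*}
Its square is $C_{n}t^{-1}$. Adding the two contributions proves \eqref{eq:Grad_G}. The cubic term is where I expect the only difficulty. It does not vanish after centering, so the fourth-moment bound is genuinely needed; this is consistent with the paper's remark that \eqref{eq:Gbounds03} was retained precisely for \eqref{eq:Grad_G}.

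For the projection, fix $x_{0}\in U$ and a small loop $\Gamma$ in $\mathbb{C}$ around $\lambda_{1}(x_{0})$ that separates it from $\lambda_{2},\ldots,\lambda_{n}$. By continuity of the eigenvalues, $\Gamma$ still separates $\lambda_{1}$ from the rest for $x$ near $x_{0}$. On that neighborhood
\begin{equation*}
	\Pi=\frac{1}{2\pi i}\oint_{\Gamma}(z-\mathsf{G}^{\sharp})^{-1}\dd z,
\end{equation*}
which is smooth because $\mathsf{G}$ is smooth by \autoref{prop:HeatTensorRegularity}.

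For the derivative bound, differentiate $\Pi^{2}=\Pi$ and $\mathsf{G}\Pi=\lambda_{1}\Pi$, where $\lambda_{1}$ is smooth on $U$ since it is simple. With $P=I-\Pi$, this gives $\nabla\Pi=\Pi(\nabla\Pi)P+P(\nabla\Pi)\Pi$ and $(\mathsf{G}-\lambda_{1})\nabla\Pi=(\nabla\lambda_{1}-\nabla\mathsf{G})\Pi+\Pi\cdot(\ldots)$. Projecting with $P$ yields
\begin{equation*}
	P\,\nabla_{X}\Pi\,\Pi=-(\mathsf{G}-\lambda_{1})^{-1}P\,(\nabla_{X}\mathsf{G})\,\Pi,
\end{equation*}
where the inverse is taken on the range of $P$ and has operator norm below $1/\delta$. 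With a unit vector $\eta\in L$, this says $\nabla_{X}\Pi=w\otimes\eta^{\flat}+\eta\otimes w^{\flat}$ with $|w|\le\delta^{-1}|P(\nabla_{X}\mathsf{G})\eta|$. Hence $\|\nabla_{X}\Pi\|_{\mathrm{HS}}^{2}=2|w|^{2}\le 2\delta^{-2}|P(\nabla_{X}\mathsf{G})\eta|^{2}$. Since $\nabla_{X}\mathsf{G}$ is symmetric, its off-diagonal block appears twice in $\|\nabla_{X}\mathsf{G}\|_{\mathrm{HS}}^{2}$, so $2|P(\nabla_{X}\mathsf{G})\eta|^{2}\le\|\nabla_{X}\mathsf{G}\|_{\mathrm{HS}}^{2}$. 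Summing over an orthonormal basis of directions $X$ gives \eqref{eq:Grad_{p}i} with constant exactly $1/\delta$.
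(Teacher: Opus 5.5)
Your proposal is correct and follows essentially the paper's route. Your formula for $\nabla_X\mathsf{G}$ is exactly the paper's $\mathcal{B}_{t,p,X}^*\mathcal{A}_t+\mathcal{A}_t^*\mathcal{B}_{t,p,X}$ written out as an output integral, with the same Cauchy--Schwarz and fourth-moment inputs; the paper bounds it via $\|\mathcal{A}_t\|_{\mathrm{op}}\le1$ and inserts $\mathsf{G}/(2t)$ by the triangle inequality, whereas you center exactly using $\int\ell\,\dd\mu_{x,t}=0$. Your differentiation of $\Pi^2=\Pi$ and $\mathsf{G}\Pi=\lambda_1\Pi$ reproduces the paper's Riesz-formula and first-order perturbation argument for $D\Pi[V]$, with the same off-diagonal Hilbert--Schmidt counting that gives the sharp constant $1/\delta$.
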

\begin{proof}
	Fix $t>0$ and set
	\[
		\mathcal{L}:=L^{2}(M,\dd\Vol_{y}),	\qquad \underline{\mathcal{L}}:=M\times\mathcal{L}.
	\]
	The measure defining $\mathcal L$ is independent of the pole, so $\underline{\mathcal L}$ is a trivial Hilbert bundle. We denote its flat metric connection by $\nabla^{\mathcal L}$.

	For $p\in M$ and $\xi,\eta\in T_{p}M$, define
	\[
		[\mathcal{A}_{t}(p)\eta](y) =-\sqrt{2tH_{p}(y, t)} d_{p}h_{p}(y, t)(\eta),
	\]
	and
	\begin{align*}
		&[\mathcal{B}_{t}(p)(\xi,\eta)](y)\\
		&:=-\sqrt{2tH_{p}(y,t)}
		\left(\hes_{p}h_{p}(y,t)(\xi,\eta)-\frac{1}{2}d_{p}h_{p}(y, t)(\xi)d_{p}h_{p}(y, t)(\eta)\right).
	\end{align*}
	Therefore
	\[
		\mathcal{A}_{t}\in \Gamma(T^{*}M\otimes\underline{\mathcal{L}}),
		\qquad \mathcal{B}_{t}\in\Gamma(T^{*}M\otimes T^{*}M\otimes\underline{\mathcal{L}}).
	\]
	By \autoref{lem:heat-jets}, in every smooth local frame each pole-variable derivative of the components of $\mathcal{A}_{t}$ or $\mathcal{B}_{t}$ is $\sqrt{H}$ times a finite polynomial. Its squared $\mathcal{L}$-norm has a locally uniform integrable majorant in the output variable $y$. Dominated difference quotients therefore show that $\mathcal{A}_{t}$ and $\mathcal{B}_{t}$ are smooth Hilbert-valued tensor fields.

	We now calculate the covariant derivative of $\mathcal{A}_{t}$. Let $U\subset M$ be open, and let $X, Y\in\Gamma^{\infty}(TU)$ be smooth vector fields. The connection induced on $T^{*}M\otimes \underline{\mathcal{L}}$ is
	\[
		(\nabla_{X}\mathcal{A}_{t})(Y) := \nabla_{X}^{\mathcal{L}}\bigl(\mathcal{A}_{t}(Y)\bigr) -\mathcal{A}_{t}(\nabla_{X}^{g}Y),
	\]
	where $\nabla^{g}$ denotes the Levi--Civita connection.

	For every fixed $y\in M$, the identity $h(\cdot, y, t)=-\log H(\cdot,y,t)$ gives
	\[
		X\bigl(\sqrt{H(\cdot,y,t)}\bigr)	=-\frac{1}{2}\sqrt{H(\cdot, y, t)}d_{x}h(X).
	\]
	Moreover,
	\[
		X\bigl(d_{x}h(Y)\bigr)=\hes_{x}h(X,Y)+d_{x}h(\nabla_{X}^{g}Y).
	\]
	Consequently, pointwise on $U$ and in the output variable $y$,
	\begin{align*}
		(\nabla_{X}\mathcal{A}_{t})(Y)= &-\sqrt{2tH}
		\left(\hes_{x}h(X,Y)+d_{x}h(\nabla_{X}^gY)-\frac{1}{2}d_{x}h(X)d_{x}h(Y)\right)\\
		&\quad+\sqrt{2tH}\,d_{x}h(\nabla_{X}^gY)\\
		= &-\sqrt{2tH}\left(\hes_{x}h(X,Y)-\frac{1}{2} d_{x}h(X)d_{x}h(Y)\right)\\
		= &\mathcal{B}_{t}(X,Y).
	\end{align*}
	Hence, with the convention
	\[
		(\nabla\mathcal{A}_{t})(X,Y):=(\nabla_{X}\mathcal{A}_{t})(Y),
	\]
	we have
	\[
		\nabla\mathcal{A}_{t}=\mathcal{B}_{t}.
	\]
	Although vector fields are used in this calculation, the resulting identity is tensorial. In particular, its value at $p$ depends only on $X_{p}$ and $Y_{p}$, and not on the chosen local extensions. For each $p\in M$, consider
	\[
		\mathcal{A}_{t}(p):T_{p}M\longrightarrow\mathcal{L}
	\]
	as a bounded operator. For $\eta,\zeta\in T_{p}M$,
	\begin{align*}
		\left\langle
			\mathcal{A}_{t}(p)\eta, \mathcal{A}_{t}(p)\zeta \right\rangle_{\mathcal{L}}
		&= 2t\int_{M} H_{p}d_{p}h_{p}(\eta)d_{p}h_{p}(\zeta)\dd\Vol_{y}\\
		&= \mathsf{G}_{p}(t)(\eta,\zeta).
	\end{align*}
	Therefore
	\[
		\mathsf{G}_{p}(t)^\sharp =\mathcal{A}_{t}(p)^*\mathcal{A}_{t}(p),
	\]
	where the adjoint is taken with respect to $g_{p}$ and the $\mathcal L$-inner product.

	To differentiate this identity, let $X,Y,Z\in\Gamma^\infty(TU)$. Metric compatibility gives
	\begin{align*}
		(\nabla_{X}\mathsf{G})(Y,Z)=&
		X\left\langle	\mathcal{A}_{t}(Y),\mathcal{A}_{t}(Z)\right\rangle_{\mathcal L}
		-\left\langle\mathcal{A}_{t}(\nabla_{X}^gY),\mathcal{A}_{t}(Z) \right\rangle_{\mathcal L}
		-\left\langle\mathcal{A}_{t}(Y),\mathcal{A}_{t}(\nabla_{X}^gZ) \right\rangle_{\mathcal L}\\
		=&\left\langle(\nabla_{X}\mathcal{A}_{t})(Y),\mathcal{A}_{t}(Z)\right\rangle_{\mathcal L}
		+\left\langle\mathcal{A}_{t}(Y),(\nabla_{X}\mathcal{A}_{t})(Z)\right\rangle_{\mathcal L}\\
		=&\left\langle\mathcal{B}_{t}(X,Y),\mathcal{A}_{t}(Z) \right\rangle_{\mathcal L}
		+\left\langle\mathcal{A}_{t}(Y),\mathcal{B}_{t}(X,Z)\right\rangle_{\mathcal L}.
	\end{align*}

	Now fix $p\in M$ and $\xi\in T_{p}M$, and define the contraction in the first, covariant-derivative slot by
	\[
		\mathcal{B}_{t, p,\xi}:=\mathcal{B}_{t}(p)(\xi, \cdot):T_{p}M\longrightarrow\mathcal L.
	\]
	Since the preceding identity is tensorial, it is equivalently the pointwise operator identity
	\begin{equation}\label{eq:Hilbert-product}
		\bigl(	(\nabla\mathsf{G})_{p}(\xi,\cdot,\cdot)\bigr)^\sharp
		=\mathcal{B}_{t, p,\xi}^*\mathcal{A}_{t}(p)+\mathcal{A}_{t}(p)^*\mathcal{B}_{t, p, \xi}.
	\end{equation}
	Thus every term in \eqref{eq:Hilbert-product} is an endomorphism of $T_{p}M$.

	By \autoref{prop:Gbounds},
	\[
		\|\mathcal{A}_{t}(p)\|_{\mathrm{op}}^{2}=\|\mathcal{A}_{t}(p)^*\mathcal{A}_{t}(p)\|_{\mathrm{op}}
		=\lambda_{\max}\bigl(\mathsf{G}_{p}(t)^\sharp\bigr)\le1.
	\]
	It follows from \eqref{eq:Hilbert-product} that
	\[
		\left\|	\bigl((\nabla\mathsf{G})_{p}(\xi,\cdot,\cdot)\bigr)^\sharp
		\right\|_{\mathrm{HS}}	\le 2\|\mathcal{A}_{t}(p)\|_{\mathrm{op}}\|\mathcal{B}_{t, p,\xi}\|_{\mathrm{HS}}.
	\]

	Inserting $\mathsf{G}_{p}(t)/(2t)$ in the definition of $\mathcal{B}_{t}$ and using $|a+b+c|^{2}\le 3(|a|^{2}+|b|^{2}+|c|^{2})$, we obtain
	\begin{align*}
		|\mathcal{B}_{t}(p)|^{2}& =2t\int_{M}H_{p}
		\left|\hes_{p}h_{p}-\frac{1}{2} d_{p}h_{p}\otimes d_{p}h_{p}\right|^{2}\dd\Vol_{y}\\
		&\le 6tD_{p}(t) +\frac{3}{2t}|\mathsf{G}_{p}(t)|^{2}+\frac{3t}{2}\int_{M} H_{p}|d_{p} h_{p}|^{4} \dd\Vol_{y}\\
		&\le 6tD_{p}(t)+\frac{C_n}{t},
	\end{align*}
	where the last inequality follows from \autoref{prop:Gbounds}.

	Finally, let $\{e_{i}\}_{i=1}^{n}$ be an orthonormal basis of $T_{p}M$. Then
	\begin{align*}
		|(\nabla\mathsf{G}(t))_{p}|^{2}
		&=\sum_{i=1}^{n}\left\|\bigl((\nabla\mathsf{G})_{p}(e_{i},\cdot,\cdot)\bigr)^\sharp\right\|_{\mathrm{HS}}^{2}\\
		&\le 4\|\mathcal{A}_{t}(p)\|_{\mathrm{op}}^{2}\sum_{i=1}^{n}\|\mathcal{B}_{t, p, e_{i}}\|_{\mathrm{HS}}^{2}\\
		&= 4\|\mathcal{A}_{t}(p)\|_{\mathrm{op}}^{2}\|\mathcal{B}_{t}(p)\|^{2}\\
		&\le C_{n}\bigl(tD_{p}(t)+t^{-1}\bigr).
	\end{align*}
	This proves \eqref{eq:Grad_G}.

	Now fix $t>0$ and an open set $U$ on which 	$\lambda_2-\lambda_{1}>\delta>0$. Locally choose a contour $\Gamma$ enclosing only the lowest eigenvalue. The Riesz formula
	\begin{equation*}
		\Pi=\frac{1}{2\pi i}\int_\Gamma (zI-\mathsf{G}^\sharp)^{-1}\dd z
	\end{equation*}
	shows that $\Pi$ is smooth. At a point, choose a normal orthonormal frame whose value is an eigenbasis $\eta,e_2,\ldots,e_n$, with $\eta$ in the lowest eigenspace. For every symmetric endomorphism $V$,
	\begin{equation*}
		D\Pi[V]=\sum_{j=2}^{n} \frac{\langle V\eta,e_{j}\rangle}{\lambda_{1}-\lambda_{j}}
		\bigl(\eta\otimes e_{j}^\flat+e_{j}\otimes\eta^\flat\bigr).
	\end{equation*}
	The summands are orthogonal, and the Hilbert--Schmidt norm of a symmetric endomorphism counts each mixed entry twice. Hence $|D\Pi[V]|\le\delta^{-1}|V|$. Applying this to $V=(\nabla_{X}\mathsf{G})^\sharp$ and summing over $X$ proves \eqref{eq:Grad_{p}i}.
\end{proof}

\begin{corollary}\label{cor:GradG_bound}
	For every $t\ge1$,
	\begin{equation}\label{eq:GradG_bound}
		|\nabla \mathsf{G}(t)|^{2} \le \frac{C_{n}}{t} \mathsf{Q}(t).
	\end{equation}
\end{corollary}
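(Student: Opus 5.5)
The bound follows by combining \eqref{eq:Grad_G} from \autoref{prop:SpecProj_Regularity} with the lower bound $\mathsf{Q}\ge 1$ implicit in \autoref{lem:OneUnit}. Fix $x\in M$ and $t\ge1$. \autoref{prop:SpecProj_Regularity} gives
\begin{equation*}
	|\nabla \mathsf{G}_{x}(t)|^{2}\le C_{n}\bigl(tD_{x}(t)+t^{-1}\bigr),
\end{equation*}
so it suffices to bound each of the two terms on the right by a multiple of $t^{-1}\mathsf{Q}_{x}(t)$.

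\emph{The variance term.} By \autoref{def:Q}, the three summands of $\mathsf{Q}$ are each nonnegative: $D\ge0$, $|g-\mathsf{G}|^{2}\ge0$, and $\langle\Ric,\mathsf{G}\rangle\ge0$ because $\Ric\ge0$ and $\mathsf{G}\ge0$ by \autoref{prop:Gbounds}. Hence $4t^{2}D_{x}(t)\le\mathsf{Q}_{x}(t)$, which gives
\begin{equation*}
	tD_{x}(t)\le \frac{1}{4t}\mathsf{Q}_{x}(t).
\end{equation*}
\autoref{lem:OneUnit} yields the slightly sharper bound $4t^{2}D\le\mathsf{Q}-1$, but the cruder one is enough here.

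\emph{The constant term.} Since $t\ge1$, \autoref{lem:OneUnit} applies with $A=\Ric^{\sharp}$ and $G=\mathsf{G}(t)$. It gives $|g-\mathsf{G}|^{2}+2t\langle\Ric,\mathsf{G}\rangle\ge1$, and since $D\ge 0$ this means $\mathsf{Q}_{x}(t)\ge1$. Therefore
\begin{equation*}
	t^{-1}\le t^{-1}\mathsf{Q}_{x}(t).
\end{equation*}

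Adding the two bounds gives $|\nabla\mathsf{G}(t)|^{2}\le C_{n}(\tfrac14+1)t^{-1}\mathsf{Q}(t)$, and absorbing $\tfrac54$ into $C_{n}$ proves \eqref{eq:GradG_bound}. The argument uses the hypotheses $\Ric\ge0$, $\Sc\ge1$ only through \autoref{lem:OneUnit}, which is what controls the constant term, and through the nonnegativity of the Ricci summand. There is no real obstacle: the one point to check is that $t\ge1$ is exactly the hypothesis \autoref{lem:OneUnit} needs, and finiteness of all quantities is guaranteed by \autoref{prop:HeatTensorRegularity}.
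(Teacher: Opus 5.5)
Your proposal is correct and follows essentially the same route as the paper: substitute \eqref{eq:Grad_G}, bound $tD$ by a multiple of $\mathsf{Q}/t$, and absorb $t^{-1}$ using $\mathsf{Q}\ge1$ from Lemma~\ref{lem:OneUnit}, which applies since $t\ge1$. The only difference is cosmetic: you bound $D$ via $4t^{2}D\le\mathsf{Q}$, from nonnegativity of the summands, where the paper uses the sharper $4t^{2}D\le\mathsf{Q}-1$, and this changes only the constant.
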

\begin{proof}
	By \autoref{lem:OneUnit}, we have:
	\begin{equation*}
		4t^{2}\int_{M} H_{x}\left|\hes_{x}h_{x}-\frac{\mathsf{G}_{x}(t)}{2t}
		\right|^{2}\dd \Vol_{y}\le \mathsf{Q}_{x}(t)-1
	\end{equation*}
	Therefore, $D_{x}(t)\le (\mathsf{Q}_{x}(t)-1)/(4t^{2})$ and $\mathsf{Q}_{x}(t)\ge 1$, so
	\begin{equation*}
		tD_{x}(t)+t^{-1} \le C(\mathsf{Q}_{x}(t)-1)/t +C/t \le C\mathsf{Q}_{x}(t)/t.
	\end{equation*}
	Substituting this into \eqref{eq:Grad_G} yields the desired bound.
\end{proof}

\subsection{Smooth spectral cutoff}

Fix $0<\varepsilon\le1/2$ and $t\ge6/\varepsilon$, and set
\begin{equation*}
	\begin{aligned}
		\Omega_{\varepsilon}(t)&:=\{x\in M:\mathsf{Q}_{x}(t)<2-\varepsilon\},\\
		U_{\varepsilon}(t)&:=\{x\in M:\lambda_2(x,t)-\lambda_{1}(x,t)>\varepsilon/48\}.
	\end{aligned}
\end{equation*}
On $U_{\varepsilon}(t)$ let $\Pi=\Pi(t)$ be the smooth projection onto
the lowest eigenspace of $\mathsf{G}(t)^{\sharp}$.

We construct the cutoff directly on $\mathcal{V}=\mathrm{Sym}(n)$, equipped
with the Hilbert--Schmidt norm. Put
\begin{equation*}
	\begin{aligned}
		\mathcal{F}_{\varepsilon}
		&:=\{A\in\mathcal{V}:0\le A\le I,\ \lambda_{1}(A)\le\varepsilon/6,
		\ \lambda_2(A)\ge\varepsilon/3\},\\
		\mathcal{O}_{\varepsilon}
		&:=\{A\in\mathcal{V}:\lambda_2(A)-\lambda_{1}(A)>\varepsilon/48\}.
	\end{aligned}
\end{equation*}
Weyl's eigenvalue inequality implies
\begin{equation}\label{eq:matrix-cutoff-separation}
	\operatorname{dist}_{\mathrm{HS}}(\mathcal{F}_{\varepsilon},
	\mathcal{V}\setminus\mathcal{O}_{\varepsilon})\ge\frac{7\varepsilon}{96}.
\end{equation}
Indeed, the spectral gap changes by at most twice the operator-norm distance, whereas it is at least $\varepsilon/6$ on $\mathcal{F}_{\varepsilon}$ and at most $\varepsilon/48$ off $\mathcal{O}_{\varepsilon}$.

Let $d_{\varepsilon}(A)=\operatorname{dist}_{\mathrm{HS}}(A, \mathcal{F}_{\varepsilon})$, put $r=7\varepsilon/768$, and choose an $O(n)$-invariant Lipschitz function $q_{\varepsilon}:\mathcal{V}\to[0,1]$ which equals one for $d_{\varepsilon}\le2r$, vanishes for $d_{\varepsilon}\ge 3r$, and has $\operatorname{Lip}(q_{\varepsilon})\le C/r$. Convolve it with a nonnegative radial mollifier on $\mathcal{V}$ supported in the ball of radius $r$. The resulting function $\Theta_{\varepsilon}\in C^{\infty}(\mathcal{V};[0,1])$ is invariant under orthogonal conjugation and satisfies
\begin{equation}\label{eq:matrix-cutoff-properties}
	\Theta_{\varepsilon}=1\ \text{on }\mathcal{F}_{\varepsilon},
	\qquad \supp\Theta_{\varepsilon}\subset\mathcal{O}_{\varepsilon},
	\qquad |D\Theta_{\varepsilon}|\le\frac{C_n}{\varepsilon}.
\end{equation}
The support inclusion follows from \eqref{eq:matrix-cutoff-separation}, since $4r<7\varepsilon/96$.

Orthogonal invariance makes
\begin{equation*}
	\phi(x):=\Theta_{\varepsilon} \bigl([\mathsf{G}_{x}(t)^{\sharp}]_{\{e_{i}\}}\bigr)
\end{equation*}
independent of the local orthonormal frame $\{e_{i}\}$. Thus $\phi\in C^{\infty}(M; [0,1])$. The projection $\Pi$ itself is defined only on $U_{\varepsilon}(t)$. The combinations $\phi\Pi$ and $\phi\nabla\Pi$ (and hence all weighted scalar expressions used below) extend smoothly by zero to $M$, because $\supp_{M}\phi\subset U_{\varepsilon}(t)$ with a uniform gap buffer.

\begin{lemma}\label{lem:CutOff}
	Let the notation be as above. Then
	\begin{equation*}
		\phi\equiv 1\quad \mathrm{on}\ \Omega_{\varepsilon}(t),\qquad
		\mathrm{supp}_{M}(\phi)\subset U_{\varepsilon}(t),
	\end{equation*}
	and
	\[
		\left\langle \Ric, \Pi \right\rangle \ge \frac{1}{2}\quad \mathrm{on}\ \Omega_{\varepsilon}(t).
	\]
	Moreover, $\Pi$ is smooth on $U_{\varepsilon}(t)$, and
	\begin{equation}\label{eq:cutoff_bound}
		\phi^{2}|\nabla \Pi|^{2} + |\nabla \phi|^{2} \le \frac{C_{n}}{\varepsilon^{2}t}\mathsf{Q}.
	\end{equation}
\end{lemma}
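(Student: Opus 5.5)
The lemma collects properties of the cutoff construction, and I will verify them in order, mostly from \autoref{lem:Sub2}, \eqref{eq:matrix-cutoff-properties}, \autoref{prop:SpecProj_Regularity} and \autoref{cor:GradG_bound}.

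\emph{Step 1: $\phi\equiv1$ on $\Omega_{\varepsilon}(t)$ and the Ricci bound there.} Take $x\in\Omega_{\varepsilon}(t)$, so $\mathsf{Q}_{x}(t)<2-\varepsilon$. With $t\ge6/\varepsilon$, \autoref{lem:Sub2} gives $\lambda_{1}<\varepsilon/6$ and $\lambda_{2}>\varepsilon/3$. By \autoref{prop:Gbounds}, $0\le\mathsf{G}\le g$. Hence the matrix of $\mathsf{G}_{x}(t)^{\sharp}$ in any orthonormal frame lies in $\mathcal{F}_{\varepsilon}$, and \eqref{eq:matrix-cutoff-properties} gives $\phi(x)=1$. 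The same lemma gives $\langle\Ric,\Pi\rangle>1/2$ at $x$. Here $\Pi$ is the projection onto the unique lowest eigenline, which coincides with the $\Pi$ defined on $U_{\varepsilon}(t)$ because the gap exceeds $\varepsilon/6>\varepsilon/48$.

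\emph{Step 2: support and smoothness.} If $\phi(x)\neq0$, then the frame matrix of $\mathsf{G}_{x}(t)^{\sharp}$ lies in $\supp\Theta_{\varepsilon}\subset\mathcal{O}_{\varepsilon}$, so $x\in U_{\varepsilon}(t)$. To get the closed support inside $U_{\varepsilon}(t)$, I will use the buffer: $\supp\Theta_{\varepsilon}$ lies in $\{d_{\varepsilon}\le4r\}$, which is a closed set at positive distance from $\mathcal{V}\setminus\mathcal{O}_{\varepsilon}$ by \eqref{eq:matrix-cutoff-separation}. The map $x\mapsto\mathsf{G}_{x}(t)$ is continuous by \autoref{prop:HeatTensorRegularity}. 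So the closure of $\{\phi\neq0\}$ lies in the preimage of $\{d_{\varepsilon}\le4r\}$, which is closed and contained in $U_{\varepsilon}(t)$. Smoothness of $\Pi$ on the open set $U_{\varepsilon}(t)$ is the Riesz-formula part of \autoref{prop:SpecProj_Regularity} with $\delta=\varepsilon/48$.

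\emph{Step 3: the gradient bound \eqref{eq:cutoff_bound}.} On $\supp\phi\subset U_{\varepsilon}(t)$, \eqref{eq:Grad_{p}i} with $\delta=\varepsilon/48$ gives $\phi^{2}|\nabla\Pi|^{2}\le|\nabla\Pi|^{2}\le(48/\varepsilon)^{2}|\nabla\mathsf{G}|^{2}$. Off the support, $\phi\Pi$ and $\phi\nabla\Pi$ vanish by the convention that they extend by zero. For $\nabla\phi$, I will differentiate in a local orthonormal frame that is parallel at the point. Then $\nabla_{X}\phi=D\Theta_{\varepsilon}[(\nabla_{X}\mathsf{G})^{\sharp}]$, and orthogonal invariance of $\Theta_{\varepsilon}$ makes this frame-independent. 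Hence $|\nabla\phi|\le|D\Theta_{\varepsilon}|\,|\nabla\mathsf{G}|\le C_{n}\varepsilon^{-1}|\nabla\mathsf{G}|$. Combining with \autoref{cor:GradG_bound}, which applies since $t\ge6/\varepsilon\ge1$, gives
\[
\phi^{2}|\nabla\Pi|^{2}+|\nabla\phi|^{2}\le\frac{C_{n}}{\varepsilon^{2}}|\nabla\mathsf{G}|^{2}\le\frac{C_{n}}{\varepsilon^{2}t}\mathsf{Q}.
\]

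The only delicate point is the chain rule for $\phi$. The frame-dependent matrix representation has to be handled so that the connection terms drop out, and choosing a frame parallel at the point, together with conjugation invariance of $\Theta_{\varepsilon}$, resolves this. Everything else is direct bookkeeping of the constants $\varepsilon/6$, $\varepsilon/3$ and $\varepsilon/48$.
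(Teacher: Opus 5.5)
Your proposal is correct and follows the same route as the paper. You place $\mathsf{G}_x(t)^\sharp$ in $\mathcal{F}_\varepsilon$ via \autoref{lem:Sub2}, read off the support and the smoothness of $\Pi$ from \eqref{eq:matrix-cutoff-properties} and the Riesz formula, and obtain the gradient bound by combining three ingredients: the eigenprojection estimate of \autoref{prop:SpecProj_Regularity} with $\delta=\varepsilon/48$, the chain rule for the conjugation-invariant $\Theta_\varepsilon$ in a frame normal at the point, and \autoref{cor:GradG_bound}. Your buffer argument, showing that the closed support of $\phi$ lies in the preimage of $\{d_\varepsilon\le 4r\}\subset\mathcal{O}_\varepsilon$, is just a more explicit version of the paper's one-line appeal to \eqref{eq:matrix-cutoff-properties}.
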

\begin{proof}
	By \autoref{lem:Sub2}, if $x\in\Omega_{\varepsilon}(t)$, then
	\begin{equation*}
		\lambda_{1}(x,t)<\frac{\varepsilon}{6},
		\qquad
		\lambda_{2}(x,t)>\frac{\varepsilon}{3}.
	\end{equation*}
	Thus $\mathsf{G}_{x}(t)^{\sharp}\in\mathcal{F}_{\varepsilon}$, and
	\eqref{eq:matrix-cutoff-properties} together with \autoref{lem:Sub2} gives
	\begin{equation*}
		\phi\equiv 1 \quad\mathrm{and}\quad
		\left\langle\Ric,\Pi\right\rangle\ge\frac{1}{2}
		\qquad\mathrm{on}\ \Omega_{\varepsilon}(t).
	\end{equation*}
	The support assertion is \eqref{eq:matrix-cutoff-properties}. Since $\mathsf{G}(t)$ is smooth and the lowest eigenvalue is simple on $U_{\varepsilon}(t)$, the Riesz projection formula makes $\Pi$ smooth there. The quantitative gap and \eqref{eq:Grad_{p}i} give
	\begin{equation}\label{eq:3:cut:Pi}
		|\nabla\Pi| \le\frac{48}{\varepsilon}|\nabla\mathsf{G}|
		\qquad\mathrm{on}\ U_{\varepsilon}(t).
	\end{equation}
	At a given point, take a local orthonormal frame normal there. Differentiating the frame-independent matrix formula for $\phi$ and using 	\eqref{eq:matrix-cutoff-properties} yields
	\begin{equation}\label{eq:3:cut:phi}
		|\nabla\phi|\le\frac{C_n}{\varepsilon}|\nabla\mathsf{G}|.
	\end{equation}
	
	Since $0\le\phi\le 1$, combining
	\eqref{eq:3:cut:Pi}, \eqref{eq:3:cut:phi}, and \eqref{eq:GradG_bound} gives
	\begin{equation*}
		\phi^{2}|\nabla\Pi|^{2}+|\nabla\phi|^{2}
		\le
		\frac{C_{n}}{\varepsilon^{2}}|\nabla\mathsf{G}|^{2}
		\le
		\frac{C_{n}}{\varepsilon^{2}t}\mathsf{Q}.
	\end{equation*}
	This proves \eqref{eq:cutoff_bound}. 
\end{proof}

\subsection{Weighted Weitzenb\"{o}ck formula}
The first defect unit in \autoref{lem:OneUnit} is pointwise algebra. The analytic input for the second unit is the Weitzenb\"{o}ck formula for one-forms, written in divergence form for a unit one-form. Its closed case is precisely the traced Riccati equation, which explains the underlying geometry.

Let $\eta$ be a smooth unit one-form and put $V=\eta^{\sharp}$. With the convention $\delta\eta=-\diver V$ and $\Delta_{H}=d\delta+\delta d$, the Weitzenb\"{o}ck formula reads
\begin{equation*}
	\Delta_{H}\eta=\nabla^{*}\nabla\eta+\Ric(V,\cdot).
\end{equation*}
Since $|\eta|=1$, we have
\begin{equation*}
	\langle\nabla^{*}\nabla\eta,\eta\rangle=|\nabla\eta|^{2}.
\end{equation*}
On the other hand,
\begin{equation*}
	\begin{aligned}
		\langle d\delta\eta,\eta\rangle
		&=\diver\bigl((\delta\eta)V\bigr)+|\delta\eta|^{2},\\
		\langle\delta d\eta,\eta\rangle
		&=\diver\bigl((\iota_{V}d\eta)^{\sharp}\bigr)+|d\eta|^{2}.
	\end{aligned}
\end{equation*}
Taking the inner product of the Weitzenb\"{o}ck formula with $\eta$ therefore gives the pointwise identity
\begin{equation}\label{eq:unit-oneform-Weitzenbock}
	\begin{aligned}
		\Ric(V,V)=&\diver\left((\iota_{V}d\eta)^{\sharp}+(\delta\eta)V	\right)\\
		&+|d\eta|^{2}+|\delta\eta|^{2}-|\nabla\eta|^{2}.
	\end{aligned}
\end{equation}

The traced Riccati equation is a special case of this identity. Indeed, if $d\eta=0$, then $V$ is geodesic and $\ker\eta$ is locally tangent to a hypersurface foliation. Writing
\begin{equation*}
	B(X,Y)=\langle\nabla_{X}V,Y\rangle,
	\quad X,Y\in\ker\eta, \quad H=\tr_{\ker\eta}B=\diver V=-\delta\eta,
\end{equation*}
we have $|\nabla\eta|^{2}=|B|^{2}$, and \eqref{eq:unit-oneform-Weitzenbock} reduces to
\begin{equation*}
	V(H)+|B|^{2}+\Ric(V,V)=0.
\end{equation*}
For the covariance eigenline, local unit representatives need not be closed and admit no global choice of sign. We therefore retain the full Weitzenb\"{o}ck formula, first in the following integrated form and then in a projection formulation.

\begin{lemma}\label{lem:oneform-Weitzenbock}
	Let $(N, g)$ be an $n$-dimensional Riemannian manifold. For every $\alpha\in C_{c}^{\infty}(T^{*}N)$,
	\begin{equation}\label{eq:integrated-oneform-Weitzenbock}
		\begin{aligned}
			\int_{N}\Ric(\alpha^{\sharp},\alpha^{\sharp})\dd\Vol
			=&\int_{N}\left( |d\alpha|^{2}+|\delta\alpha|^{2}-|\nabla\alpha|^{2} \right)\dd\Vol\\
			\le&(n+1)\int_{N}|\nabla\alpha|^{2}\dd\Vol.
		\end{aligned}
	\end{equation}
	Therefore, if $w>0$ is a smooth function and $\beta\in C_{c}^{\infty}(T^{*}N)$, then
	\begin{equation}\label{eq:weighted-oneform-Weitzenbock}
		\int_{N}w\Ric(\beta^{\sharp},\beta^{\sharp})\dd\Vol
		\le C_{n}\int_{N}w\left(|\nabla\beta|^{2}+|\beta|^{2}|\nabla\log w|^{2} \right)\dd\Vol.
	\end{equation}
\end{lemma}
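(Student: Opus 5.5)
The identity follows by integrating the pointwise Weitzenböck formula, and the inequality by bounding $|d\alpha|^2+|\delta\alpha|^2$ in terms of $|\nabla\alpha|^2$. The weighted version comes from substituting $\alpha=\sqrt{w}\,\beta$.

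\emph{Step 1: the identity.} For $\alpha\in C_c^\infty(T^*N)$, the Weitzenböck formula gives
\[
\Delta_H\alpha=\nabla^*\nabla\alpha+\Ric(\alpha^\sharp,\cdot).
\]
Pair it with $\alpha$ and integrate. Since $\alpha$ has compact support, no boundary terms appear, so
\[
\int\langle\Delta_H\alpha,\alpha\rangle=\int|d\alpha|^2+|\delta\alpha|^2,
\qquad
\int\langle\nabla^*\nabla\alpha,\alpha\rangle=\int|\nabla\alpha|^2 .
\]
This is the equality in \eqref{eq:integrated-oneform-Weitzenbock}. It is the integrated, non-unit analogue of \eqref{eq:unit-oneform-Weitzenbock}: the divergence terms integrate to zero.

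\emph{Step 2: the inequality.} Both $d\alpha$ and $\delta\alpha$ are linear in $\nabla\alpha$:
\[
d\alpha(X,Y)=(\nabla_X\alpha)(Y)-(\nabla_Y\alpha)(X),
\qquad
\delta\alpha=-\tr\nabla\alpha .
\]
With the standard form norm, $|d\alpha|^2=\tfrac12\sum_{i,j}\bigl(\nabla_i\alpha_j-\nabla_j\alpha_i\bigr)^2$. By $(a-b)^2\le 2a^2+2b^2$ this is at most $2|\nabla\alpha|^2$. The bound $|d\alpha|^2\le 2|\nabla\alpha|^2$ also holds if one uses the full tensor norm convention, with the same argument. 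Cauchy–Schwarz on the trace gives $|\delta\alpha|^2\le n|\nabla\alpha|^2$. Dropping the $-|\nabla\alpha|^2$ term, the right-hand side is at most $(n+2)\int|\nabla\alpha|^2$. Keeping the $-|\nabla\alpha|^2$ term improves this to $(n+1)\int|\nabla\alpha|^2$, which is the stated constant. In any case only some $C_n$ is needed downstream.

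\emph{Step 3: the weighted version.} Apply Step 2 to $\alpha=\sqrt{w}\,\beta$, which is smooth and compactly supported because $w>0$ is smooth. Then $\Ric(\alpha^\sharp,\alpha^\sharp)=w\Ric(\beta^\sharp,\beta^\sharp)$. Writing $d\sqrt{w}=\tfrac12\sqrt{w}\,d\log w$, we have
\[
\nabla\alpha=\sqrt{w}\Bigl(\nabla\beta+\tfrac12\, d\log w\otimes\beta\Bigr),
\]
and therefore
\[
|\nabla\alpha|^2\le 2w|\nabla\beta|^2+\tfrac12\, w|\beta|^2|\nabla\log w|^2 .
\]
Inserting this into \eqref{eq:integrated-oneform-Weitzenbock} gives \eqref{eq:weighted-oneform-Weitzenbock} with $C_n=2(n+1)$.

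No step is a genuine obstacle. The only care needed is the norm convention for $|d\alpha|^2$ (a factor of $2$), which affects the precise constant $n+1$ but not the form of \eqref{eq:weighted-oneform-Weitzenbock}.
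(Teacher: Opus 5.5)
Your proof is correct and follows the paper's argument step for step: the integrated Weitzenb\"ock identity, the bounds $|d\alpha|^{2}\le 2|\nabla\alpha|^{2}$ and $|\delta\alpha|^{2}\le n|\nabla\alpha|^{2}$, and then the substitution $\alpha=\sqrt{w}\,\beta$ with $|a+b|^{2}\le 2|a|^{2}+2|b|^{2}$. One side remark is wrong, though it does not affect the argument: with the full tensor norm $\sum_{i,j}(\nabla_{i}\alpha_{j}-\nabla_{j}\alpha_{i})^{2}$ the same argument gives only $4|\nabla\alpha|^{2}$, not $2|\nabla\alpha|^{2}$, but the equality $\int\langle\delta d\alpha,\alpha\rangle=\int|d\alpha|^{2}$ in the statement already forces the form-norm convention, so the rest of your proof stands.
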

\begin{proof}
	The equality in \eqref{eq:integrated-oneform-Weitzenbock} is the integrated Weitzenb\"{o}ck formula. Since
	\begin{equation*}
		|d\alpha|^{2}\le2|\nabla\alpha|^{2}, \qquad |\delta\alpha|^{2}\le n|\nabla\alpha|^{2},
	\end{equation*}
	the stated inequality follows. For the weighted estimate, apply \eqref{eq:integrated-oneform-Weitzenbock} to $\alpha=\sqrt{w}\,\beta$ and use
	\begin{equation*}
		\nabla(\sqrt{w}\beta)=\sqrt{w}\left(\nabla\beta+\frac{1}{2}d\log w\otimes\beta 	\right).
	\end{equation*}
	The estimate follows from Young's inequality.
\end{proof}

We now apply this estimate to the lowest covariance eigenline. The projection formulation below is intrinsic and does not require the eigenline to be orientable.

\begin{lemma}[Weighted Weitzenb\"{o}ck estimate]\label{lem:Weight_Wein}
	Let $(M,g)$ be a complete Riemannian manifold, let $U\subset M$ be open, and let $\Pi$ be a smooth rank-one orthogonal projection on $TM|_{U}$. Let $w>0$ and $\phi\in C^{\infty}(M)$ be smooth. Suppose that $\supp_{M}\phi\subset U$, $\int_{M}w\phi^{2}\dd\Vol<\infty$, and $\Ric\ge0$. Then
	\begin{equation}\label{eq:Bochner01}
		\int_{U}w\phi^{2}\langle\Ric,\Pi\rangle\dd\Vol \le C_{n}\int_{U}w\left(
			\phi^{2}|\nabla\Pi|^{2}+|\nabla\phi|^{2} +\phi^{2}|\nabla\log w|^{2}\right)\dd\Vol.
	\end{equation}
\end{lemma}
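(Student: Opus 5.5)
The plan is to reduce \eqref{eq:Bochner01} to the weighted one-form estimate \eqref{eq:weighted-oneform-Weitzenbock} of \autoref{lem:oneform-Weitzenbock} by working locally with unit sections of the line $L=\operatorname{im}\Pi$. A global one-form cannot be used, since $L$ need not be orientable. The sign ambiguity disappears, however, because every quantity in \eqref{eq:Bochner01} depends on $\eta$ only through $\eta\otimes\eta^{\flat}=\Pi$.

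\textbf{Step 1: localization.} Cover $\supp_M\phi\subset U$ by open sets $U_\alpha\Subset U$ that are small enough (say contractible geodesic balls) that $L|_{U_\alpha}$ admits a smooth unit section $\eta_\alpha$. Since $\supp\phi$ may be noncompact, choose the cover locally finite with bounded overlap, and take a partition of unity $\{\psi_\alpha^2\}$ subordinate to it with $\sum_\alpha\psi_\alpha^2=1$ on $\supp\phi$. If the bounded-overlap constant $N_n$ and the bound $\sum_\alpha|\nabla\psi_\alpha|^2$ are hard to control uniformly, I would proceed differently. I would instead pass to the orientation double cover $\hat U\to U$ of $L$, where a global unit section $\hat\eta$ exists, and apply \eqref{eq:weighted-oneform-Weitzenbock} on $N=\hat U$ to $\beta=\hat\phi\,\hat\eta$ with weight $\hat w$. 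Every integrand is invariant under the deck involution (which sends $\hat\eta\mapsto-\hat\eta$), so both sides of the resulting inequality are exactly twice the corresponding integrals over $U$. This avoids partitions entirely, and I prefer this route.

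\textbf{Step 2: apply the weighted estimate.} Compact support is needed. If $\phi$ is not compactly supported, first multiply $\phi$ by a standard cutoff $\chi_R$ on the complete manifold $M$, with $|\nabla\chi_R|\le 1/R$. Apply \eqref{eq:weighted-oneform-Weitzenbock} with $\beta=\chi_R\hat\phi\hat\eta$; this form is compactly supported on $\hat U$ because $\supp\phi\subset U$. The left side becomes $\int w\chi_R^2\phi^2\Ric(\eta,\eta)=\int w\chi_R^2\phi^2\langle\Ric,\Pi\rangle$. On the right, $|\beta|^2=\chi_R^2\phi^2$ and
\[
|\nabla\beta|^2\le 3\bigl(\chi_R^2\phi^2|\nabla\eta|^2+\chi_R^2|\nabla\phi|^2+\phi^2|\nabla\chi_R|^2\bigr).
\]
The key pointwise identity is $|\nabla\Pi|^2=2|\nabla\eta|^2$. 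It follows from $\nabla\Pi=\nabla\eta\otimes\eta+\eta\otimes\nabla\eta$ together with $\langle\nabla\eta,\eta\rangle=0$, since $|\eta|=1$. Hence $|\nabla\eta|^2\le|\nabla\Pi|^2$.

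\textbf{Step 3: remove the cutoff.} The error term is bounded by $R^{-2}\int w\phi^2$, which tends to $0$ because $\int w\phi^2<\infty$. Since $\Ric\ge0$, the left side increases monotonically to $\int w\phi^2\langle\Ric,\Pi\rangle$ as $R\to\infty$. The remaining right-side terms are bounded by the full integrals in \eqref{eq:Bochner01}, and if those are infinite the claim is trivial.

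The main obstacle is the nonorientability of $L$ together with the noncompact support. The double-cover argument handles the first issue cleanly, and the hypothesis $\int w\phi^2<\infty$ handles the second. Everything else is routine application of Young's inequality.
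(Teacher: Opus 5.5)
Your proposal is correct and follows essentially the paper's argument. Both pass to the orientation double cover of $L=\operatorname{im}\Pi$, apply the weighted one-form estimate to $\beta=\phi\eta^{\flat}$ (with a cutoff), use $|\nabla\Pi|^{2}=2|\nabla\eta|^{2}$, divide by two, and remove the cutoff using $\int_{M}w\phi^{2}\dd\Vol<\infty$ together with $\Ric\ge0$ on the left side.

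The one detail to pin down is that your ``standard cutoff'' must be smooth, because \eqref{eq:weighted-oneform-Weitzenbock} is stated for $C_c^\infty$ forms. The paper gets a smooth cutoff from the Greene--Wu exhaustion $\chi(\rho/R)$; a distance-based cutoff plus a routine approximation would also work.
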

\begin{proof}
	Set $L=\operatorname{im}\Pi$ and first suppose that $\phi$ has compact support. Let $p:\widetilde{U}\to U$ be the orientation double cover of $L$, equipped with the pullback metric. The line bundle $p^{*}L$ has its tautological smooth unit section $\eta$. All the data below are pulled back to $\widetilde{U}$. Since $p$ is a two-sheeted local isometry, every scalar integral appearing below is twice the corresponding integral on $U$; we	divide by two at the end. We have $p^{*}\Pi=\eta\otimes\eta^{\flat}$ and
	\begin{equation}\label{eq:3:eta_regularity02}
		\nabla_{X}\Pi=(\nabla_{X}\eta)\otimes\eta^{\flat} +\eta\otimes(\nabla_{X}\eta)^{\flat}.
	\end{equation}
	Since $|\eta|=1$, the two terms on the right are orthogonal in the Hilbert--Schmidt norm. Hence
	\begin{equation*}
		|\nabla\Pi|^{2}=2|\nabla\eta|^{2}.
	\end{equation*}
	Apply \eqref{eq:weighted-oneform-Weitzenbock} on $\widetilde{U}$ to the compactly supported one-form $\beta=\phi\eta^{\flat}$. The two terms in
	\begin{equation*}
		\nabla\beta=d\phi\otimes\eta^{\flat} +\phi\nabla\eta^{\flat}
	\end{equation*}
	are orthogonal. Consequently,
	\begin{equation*}
		|\nabla\beta|^{2} =|\nabla\phi|^{2}+\phi^{2}|\nabla\eta|^{2} =|\nabla\phi|^{2}+\frac{1}{2}\phi^{2}|\nabla\Pi|^{2},
	\end{equation*}
	whereas
	\begin{equation*}
		\Ric(\beta^{\sharp},\beta^{\sharp}) 	=\phi^{2}\langle\Ric,\Pi\rangle.
	\end{equation*}
	Dividing the resulting inequality by two proves \eqref{eq:Bochner01} for compactly supported $\phi$, without any orientability assumption on $L$.

	For general $\phi$, the classical Greene--Wu exhaustion theorem \cite[Corollary to Proposition 2.1]{GW1979} provides a smooth proper exhaustion function $\rho\ge 0$ with $|\nabla \rho|<1$. Put $\chi_{R}=\chi(\rho/R)$, where $\chi\in C_{c}^{\infty}([0,\infty))$ equals one on $[0,1]$ and vanishes on $[2,\infty)$. Then
	\begin{equation*}
		0\le\chi_{R}\le1,\quad \chi_{R}\longrightarrow1, \quad \supp\chi_{R}\Subset M, \quad |\nabla\chi_{R}|\le\frac{C}{R}.
	\end{equation*}
	The condition $\supp_{M}\phi\subset U$ ensures that $\chi_{R}\phi\in C_{c}^{\infty}(U)$. Apply the compact-support estimate to $\chi_{R}\phi$. Its only new error satisfies
	\begin{equation*}
		\int_{M}w\phi^{2}|\nabla\chi_{R}|^{2}\dd\Vol \le\frac{C}{R^{2}}\int_{M}w\phi^{2}\dd\Vol\longrightarrow0.
	\end{equation*}
	If the right-hand side of \eqref{eq:Bochner01} is finite, dominated convergence controls the terms containing $\chi_{R}^{2}$, while
	\begin{equation*}
		|\nabla(\chi_{R}\phi)|^{2} \le2\chi_{R}^{2}|\nabla\phi|^{2} +2\phi^{2}|\nabla\chi_{R}|^{2}.
	\end{equation*}
	Fatou's lemma applies to the left-hand side because $\Ric\ge 0$. Absorbing the harmless factor two into $C_{n}$ proves the result. If the right-hand side is infinite, the assertion is automatic.
\end{proof}

\begin{remark}
	The compact-support argument does not use $\Ric\ge 0$; that hypothesis enters only when the exhaustion is removed.
\end{remark}

\begin{lemma}[\cite{Ni2004a}, \cite{Col2012}]\label{lem:Fisher_Bound}
	For every $z\in M$ and $s>0$,
	\begin{equation}\label{eq:Fisher_Bound}
		\int_{M} H(z,y,s)|\nabla_{y} \log H(z,y,s)|^{2} \dd\Vol_{y}\le \frac{n}{2s}.
	\end{equation}
\end{lemma}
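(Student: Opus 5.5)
The plan is to derive \eqref{eq:Fisher_Bound} from the Li--Yau differential Harnack inequality, integrated against the heat kernel itself. Stochastic completeness \eqref{eq:Conservation} then kills the time-derivative term.

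A tempting shortcut is to use symmetry $H(z,y,s)=H(y,z,s)$ and quote \autoref{prop:Gbounds}. That does not work directly. Here the derivative is taken in the same variable $y$ over which we integrate, with the pole $z$ fixed. In $\tr\mathsf{G}_z(s)$, by contrast, the derivative is in the pole and the integration is in the output. So I would argue through Li--Yau instead.

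\emph{Step 1 (Li--Yau).} Fix $z$ and set $u(y,s)=H(z,y,s)$, a positive solution of $\square_y u=0$ on the complete manifold $M$ with $\Ric\ge0$. The Li--Yau gradient estimate gives, pointwise,
\begin{equation*}
	|\nabla_y\log u|^{2}-\partial_s\log u\le\frac{n}{2s}.
\end{equation*}

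\emph{Step 2 (integrate against $u$).} Multiply by $u>0$ and integrate in $y$:
\begin{equation*}
	\int_M u|\nabla_y\log u|^{2}\dd\Vol_y-\int_M\partial_s u\dd\Vol_y\le\frac{n}{2s}\int_M u\dd\Vol_y=\frac{n}{2s}.
\end{equation*}
By \eqref{eq:Conservation}, $\int_M u(\cdot,s)\dd\Vol\equiv1$ for all $s$. Hence $\int_M\partial_s u\dd\Vol_y=\frac{d}{ds}\int_M u\dd\Vol_y=0$, and the claimed inequality follows.

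\emph{Step 3 (justification; the main obstacle).} The only delicate point is the noncompact interchange of $\partial_s$ with $\int_M$. Locally uniformly in $s\in[a,b]\subset(0,\infty)$ we have $\partial_s u=\Delta_y u$, and the Li--Yau Gaussian upper bound together with the derivative estimates of \autoref{lem:heat-jets} / Appendix~B give $|\partial_s u|\le C\,V(z,\sqrt s)^{-1}(1+d^2/s)e^{-d(z,y)^2/(5s)}$. Volume doubling under $\Ric\ge0$ makes this majorant integrable. This justifies differentiation under the integral.

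There is also a sign issue: the integral of $u|\nabla\log u|^2$ could a priori be infinite while $\partial_s u$ is integrable. This is harmless. The left-hand integrand is nonnegative, and Step 2 applied to the finite quantity $\int_M\partial_s u\dd\Vol_y$ bounds it. If one prefers to avoid quoting the decay bound, an alternative is to integrate the Li--Yau inequality against $\chi_R u$, with $\chi_R$ the Greene--Wu cutoffs used in \autoref{lem:Weight_Wein}, and let $R\to\infty$ using Fatou on the gradient term and dominated convergence on the rest.
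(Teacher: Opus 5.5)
Your argument is correct and is exactly the integrated Li--Yau estimate that the paper invokes by citing Ni and Colding. You integrate $|\nabla_y\log u|^2-\partial_s\log u\le n/(2s)$ against $u=H(z,\cdot,s)$ and use stochastic completeness to kill $\int_M\partial_s u\dd\Vol_y$; you are also right that the bound cannot be read off from \autoref{prop:Gbounds} by symmetry. One detail in Step 3: \autoref{lem:heat-jets} actually yields $|\partial_s u|\le C R(y)^{N}H(z,y,s)$ with an unspecified $N$, not your specific $(1+d^2/s)$ factor. Together with \eqref{eq:heat_maj} this majorant is still integrable, which is all Step 3 needs.
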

\begin{proof}
	This is the integrated Li--Yau estimate. See \cite[Proposition 1.1]{Ni2004a}. See also Colding \cite[\S5.1]{Col2012}.
\end{proof}

\subsection{Proof of the two-unit source}

\begin{proof}[Proof of \autoref{thm:two-unit}]
	Fix $t\ge 1$, $s>0$, and $z\in M$, and set
	\begin{equation*}
		Y=P_{s}(\mathsf{Q}_{\bullet}(t))(z).
	\end{equation*}
	If $Y=+\infty$, then there is nothing to prove.

	For $0<\varepsilon\le1/2$ with $t\ge6/\varepsilon$, note first that $0\le\phi\le1$ and stochastic completeness give $\int_{M}H\phi^{2}\dd\Vol\le1$, so the integrability hypothesis in \autoref{lem:Weight_Wein} holds. Apply that lemma with $w(x)=H(z,x,s)$, $U=U_\varepsilon(t)$, and $\Pi=\Pi(t)|_{U_\varepsilon(t)}$. Since $\phi=1$ and $\langle\Ric,\Pi\rangle\ge1/2$ on $\Omega_\varepsilon(t)$, we have
	\begin{equation*}
		\frac{1}{2} P_{s}( \mathbf{1}_{\Omega_{\varepsilon}(t)}) (z) \le \int_{M} w \phi^{2} \left\langle \Ric, \Pi \right\rangle \dd\Vol.
	\end{equation*}
	The cutoff energy estimate and the global Fisher bound \eqref{eq:Fisher_Bound} give
	\begin{equation*}
		\int_{M} w (\phi^{2}|\nabla\Pi|^{2} +|\nabla \phi|^{2})\dd\Vol \le \frac{C_{n}}{\varepsilon^{2}t} Y,
	\end{equation*}
	and 
	\begin{equation*}
		\int_{M} w\phi^{2}|\nabla \log w|^{2} \dd\Vol \le \frac{n}{2s}.
	\end{equation*}
	They imply
	\begin{equation}\label{eq:two-unit-01}
		P_{s}(\mathbf{1}_{\Omega_{\varepsilon}(t)})(z) \le A_{n} \varepsilon^{-2} \left( \frac{Y}{t} + \frac{1}{s} \right).
	\end{equation}
	Here $A_{n}<\infty$ depends only on $n$. The Fisher term is placed under the common factor $\varepsilon^{-2}$ using $\varepsilon^{-2}\ge 1$. The derivative of $w$ is in the output variable, as in \autoref{lem:Fisher_Bound}.
	Since $\mathsf{Q}\ge 2-\varepsilon$ away from $\Omega_{\varepsilon}(t)$, stochastic completeness of $M$ gives 
	\begin{equation*}
		Y\ge m+ (2-\varepsilon)(1-m)=2-\varepsilon-(1-\varepsilon)m,
	\end{equation*}
	where $m:=P_{s}(\mathbf{1}_{\Omega_{\varepsilon}(t)})(z)$. Together with \eqref{eq:two-unit-01} this yields
	\begin{equation*}
		Y\ge 2-\varepsilon- A_{n}(1-\varepsilon)\varepsilon^{-2}\left( \frac{Y}{t} + \frac{1}{s} \right).
	\end{equation*}
	Set $\rho=1/t+1/s$. If
	\[
		\rho \le \rho_{0}:= 6^{-3/2},
	\]
	take $\varepsilon=\rho^{1/3}$. Then
	\begin{equation*}
		\varepsilon \le 6^{-1/2}< 1/2,\quad
		t\ge \rho^{-1}\ge 6\rho^{-1/3}=6/\varepsilon.
	\end{equation*}
	Write
	\begin{equation*}
		\beta=A_{n}(1-\varepsilon)\varepsilon^{-2}t^{-1}, \quad
		\theta=A_{n}(1-\varepsilon)\varepsilon^{-2}s^{-1}.
	\end{equation*}
	Then the preceding inequality can be rewritten as
	\begin{equation*}
		Y\ge \frac{2-\varepsilon-\theta}{1+\beta}
		= 2-\frac{\varepsilon+\theta+2\beta}{1+\beta}.
	\end{equation*}
	Since both $\theta$ and $\beta$ are at most $A_n\rho^{1/3}$, we obtain
	\[
		Y\ge 2-(1+3A_n)\rho^{1/3}.
	\]
	If $\rho>\rho_{0}$, we can enlarge $C_n$ so that the claim follows from $Y\ge P_s(\mathbf{1})(z)=1$, which holds by \autoref{lem:OneUnit} and stochastic completeness. This proves \eqref{eq:two-unit}.
\end{proof}

\section{Nash entropy}
The Nash entropy is
\begin{equation}\label{eq:Nash}
	S_{x}(t)=-\int_{M}H(x,y,t)\log H(x,y,t)\dd \Vol_{y} - \frac{n}{2}\log(4\pi t)- \frac{n}{2}.
\end{equation}

We record the following standard tail estimate in the locally uniform form needed for pole-variable differentiation. Its proof uses only the Li--Yau estimate and Bishop--Gromov comparison and does not assume bounded geometry.
\begin{lemma}\label{lem:AbsoluteEntropyTails}
	Fix $o\in M$. For every compact subset $K\subset M$, every $0<a<b<\infty$, and every integer $N\ge0$, 
	\begin{equation}\label{eq:AET01}
		\sup_{(x,t)\in K\times [a,b]}\int_{M} 
		H(x,y,t)\bigl(1+|\log H(x,y,t)|\bigr)
		\bigl(1+d(o,y)\bigr)^{N}\dd\Vol_{y} <\infty.
	\end{equation}
	If $0\le \phi_{m}\le 1$ and $\phi_{m}\to 1$ are compactly supported output cutoff functions, then 
	\begin{equation}\label{eq:AET02}
		\int_{M} (1-\phi_{m}) |\log H(x,y,t)| \dd\mu_{x,t}(y)\to 0\qquad \text{in}\ L^{1}(K\times (a,b)).
	\end{equation}
\end{lemma}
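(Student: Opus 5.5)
The plan is to derive both statements of Lemma~\ref{lem:AbsoluteEntropyTails} from two-sided Gaussian bounds obtained from the Li--Yau estimate, with the local volume factors controlled by Bishop--Gromov comparison. Under $\Ric\ge0$, Li--Yau gives
\begin{equation*}
	\frac{c_{n}}{\Vol B(x,\sqrt t)}e^{-d(x,y)^{2}/(3t)}\le H(x,y,t)\le \frac{C_{n}}{\Vol B(x,\sqrt t)}e^{-d(x,y)^{2}/(5t)}.
\end{equation*}

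For $(x,t)\in K\times[a,b]$, the quantity $\Vol B(x,\sqrt t)$ lies between two positive constants depending on $K,a,b$. The upper bound is $\Vol B(x,\sqrt b)\le \sup_{K}\Vol B(\cdot,\sqrt b)$, which is finite by continuity and compactness. The lower bound is $\Vol B(x,\sqrt a)\ge c(K,a)>0$, which holds because $x\mapsto \Vol B(x,r)$ is continuous and positive on the compact set $K$. Together with $d(o,x)\le D_{K}$ this gives
\begin{equation*}
	|\log H(x,y,t)|\le C(K,a,b)\bigl(1+d(o,y)^{2}\bigr).
\end{equation*}
The constant term comes from the volume factors. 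The quadratic term comes from the lower Gaussian bound $d(x,y)^{2}/(3t)\le 2(d(o,y)^{2}+D_{K}^{2})/a$. Note that $\log H$ is bounded above via the upper estimate.

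Inserting this into the integral reduces \eqref{eq:AET01} to showing
\begin{equation*}
	\sup_{(x,t)\in K\times [a,b]}\int_{M}H(x,y,t)\bigl(1+d(o,y)\bigr)^{N+2}\dd\Vol_{y}<\infty.
\end{equation*}
I would prove this by a dyadic annulus decomposition around $o$, namely $A_{k}=B(o,2^{k+1})\setminus B(o,2^{k})$. On $A_{k}$ with $2^{k}\ge 2D_{K}$ we have $d(x,y)\ge 2^{k-1}$. Hence the upper Gaussian bound gives $H\le C(K,a)e^{-4^{k}/(20b)}$ there. Bishop--Gromov gives $\Vol A_{k}\le \omega_{n}2^{n(k+1)}$, so the contribution of $A_{k}$ is at most $C\,2^{(n+N+2)(k+1)}e^{-4^{k}/(20b)}$, which is summable in $k$. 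The finitely many inner annuli are handled using $\int H\le 1$ and the bound $d(o,y)\le 2D_{K}+1$ on them. This is the only place where the argument is not purely formal, and the step to watch is the uniform lower bound on $\Vol B(x,\sqrt a)$. If continuity is awkward, Bishop--Gromov provides it directly: $\Vol B(x,\sqrt a)\ge (\sqrt a/(\sqrt a+2D_{K}))^{n}\Vol B(o,\sqrt a)$, since $B(o,\sqrt a)\subset B(x,\sqrt a+D_{K})$. No bounded-geometry assumption enters anywhere.

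For \eqref{eq:AET02}, write $F_{m}(x,t)=\int_{M}(1-\phi_{m})|\log H|\dd\mu_{x,t}$. Then $0\le F_{m}\le F_{0}$, where $F_{0}(x,t)=\int_{M}|\log H|\dd\mu_{x,t}$, and $F_{0}$ is bounded on $K\times[a,b]$ by \eqref{eq:AET01}. For each fixed $(x,t)$, $F_{m}(x,t)\to0$ by dominated convergence in $y$, since $\phi_{m}\to1$ pointwise and $H|\log H|\in L^{1}$. A second application of dominated convergence, now on $K\times(a,b)$ with the constant majorant $\sup F_{0}$, gives $L^{1}$ convergence. Measurability of $F_{m}$ follows from the joint continuity of $H$ and Tonelli's theorem. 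If uniform convergence were wanted instead, it could be obtained from the $N=2$ weighted bound: the integrand is then dominated by $(1+d(o,y))^{-1}$ times an integrable function outside the supports of $\phi_{m}$. However, \eqref{eq:AET02} requires only $L^{1}$ convergence, so the dominated convergence argument suffices.
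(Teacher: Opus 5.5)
Your proof is correct, but it treats the logarithm differently from the paper. You use the two-sided Li--Yau Gaussian bounds. The lower one gives the pointwise growth bound $|\log H(x,y,t)|\le C(1+d(o,y)^{2})$ on $K\times[a,b]$, which reduces \eqref{eq:AET01} to polynomial moments of $H$, and you then check those by a dyadic annulus sum.

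The paper uses only the Gaussian upper bound. Once $H\le A\exp(-d(x,y)^{2}/(5b))$ on the cylinder, it applies the elementary inequality $r|\log r|\le(\log A)r+\frac{2}{e}r^{1/2}$ for $0<r\le A$. This controls $H|\log H|$ by $H+H^{1/2}$, and both inherit Gaussian decay from the upper bound. The result is the pointwise majorant $H(1+|\log H|)\le C\exp(-d(o,y)^{2}/(20b))$, uniform over $(x,t)\in K\times[a,b]$. Its polynomial multiples are integrable by Bishop--Gromov; your annulus sum is the detailed version of that step. The same majorant gives \eqref{eq:AET02} by a single dominated convergence on $K\times(a,b)\times M$, in place of your two-step argument.

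So the paper's route needs one fewer analytic input, since the lower Gaussian bound never enters. Yours gives the more informative fact that $\log H$ grows at most quadratically in $d(o,y)$, uniformly on compact positive-time pole cylinders. That lets $|\log H|$ be treated as one more polynomial weight, which fits the bookkeeping of \autoref{lem:heat-jets} and \autoref{cor:CanonicalRep}.

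One harmless slip: the inner annuli with $2^{k}<2D_{K}$ lie in $B(o,\max\{1,4D_{K}\})$, not in $\{d(o,y)\le 2D_{K}+1\}$. The same holds for $B(o,1)$, which your annuli $A_{k}$, $k\ge0$, omit. Any bounded radius works there together with $\int H\le 1$.
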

\begin{proof}
	First, by Li--Yau's Gaussian upper estimate \cite[Corollary~3.1]{LY1986} and Bishop--Gromov volume comparison, we have:
	\begin{equation}\label{eq:entropy-Gaussian}
		H(x, y, t)\le \frac{C_{n}}{\Vol B(x, \sqrt{t})} \exp\left( -\frac{d(x, y)^{2}}{5t} \right).
	\end{equation}
	Compactness of $K$ together with $t\ge a$ gives:
	\begin{equation*}
		v_{0}:=\inf_{x\in K} \Vol B(x, \sqrt{a}) >0.
	\end{equation*}
	It follows that, after increasing $A\ge 1$,
	\begin{equation*}
		H(x, y, t) \le A \exp \left( -\frac{d(x, y)^{2}}{5b} \right)\qquad (x, t)\in K\times [a,b].
	\end{equation*}
	Using 
	\[
		r|\log r| \le (\log A)r+\frac{2}{e}r^{1/2} \qquad 0<r\le A,
	\]
	and fixing $o\in M$, the triangle inequality gives
	\[
	H(x,y,t)\bigl(1+|\log H(x,y,t)|\bigr) \le C_{K,a,b} \exp\left(-\frac{d(o,y)^{2}}{20b}\right).
	\]
	Every polynomial multiple of the right-hand side belongs to $L^1(M)$ by Bishop--Gromov volume comparison. This proves \eqref{eq:AET01}, while dominated convergence on $K\times(a,b)\times M$ proves \eqref{eq:AET02}.
\end{proof}

\begin{corollary}\label{cor:CanonicalRep}
	The quantities $S$, $\mathsf{E}$, and $\mathsf{Q}$ are finite and smooth on $M\times(0,\infty)$.
\end{corollary}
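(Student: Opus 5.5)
The claims for $\mathsf{E}$ and $\mathsf{Q}$ follow from \autoref{prop:HeatTensorRegularity}. That proposition already gives smoothness of $\mathsf{G}$ and $D$ on $M\times(0,\infty)$. By \eqref{eq:2:Excess}, $\mathsf{E}=\frac{t}{2}(n-\tr_g\mathsf{G})$ is a smooth function of $t$ and $\mathsf{G}$. By \eqref{eq:Q}, $\mathsf{Q}$ is a polynomial combination of $t$, $D$, $\mathsf{G}$, $g$ and the smooth tensor $\Ric$. Finiteness follows from \eqref{eq:2:G_{t}emp_Bounds} and the finiteness of $D$. So the real content is the statement for the Nash entropy $S$.

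For $S$ I would write $S_x(t)=\int_M h_x\,\dd\mu_{x,t}-\frac n2\log(4\pi t)-\frac n2$. Finiteness is immediate from \eqref{eq:AET01} with $N=0$, applied on compact sets $K\times[a,b]$. For smoothness I would differentiate under the output integral. Each pole-space-time derivative of $H\log H$ is a finite sum of terms of the form $H\cdot(\text{polynomial in pole derivatives of }h)\cdot(1$ or $\log H)$. Time derivatives are converted to pole Laplacians through $\square_xH=0$. By \autoref{lem:heat-jets}, the polynomial factors are controlled by the same locally uniform jet bounds used in \autoref{prop:HeatTensorRegularity}, namely Kotschwar's pole-gradient estimate together with local parabolic estimates. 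These bounds grow at most polynomially in $d(o,y)$, uniformly for $(x,t)\in K\times[a,b]$.

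The resulting integrable majorant has the form
\[
C_{K,a,b}\,H(x,y,t)\bigl(1+|\log H(x,y,t)|\bigr)\bigl(1+d(o,y)\bigr)^{N}.
\]
Because of the polynomial weight $(1+d(o,y))^N$ in \eqref{eq:AET01}, this majorant is integrable uniformly on $K\times[a,b]$. Dominated difference quotients then show that $S$ is $C^k$ for every $k$, with derivatives computed by passing under the integral.

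The main obstacle is this uniform majorant. Pointwise pole-derivative bounds on $h$ typically grow like $(1+d(x,y)^2/t)^{m}$, and the proof must check that this growth is only polynomial in $d(o,y)$ and locally uniform. The logarithmic factor must then be absorbed by \eqref{eq:AET01}. If the jet estimates of \autoref{lem:heat-jets} only give local control, I would fall back on a distributional route. I would cut off in the output variable with $\phi_m$, differentiate the truncated integrals, and use \eqref{eq:AET02} to get $L^1_{\mathrm{loc}}$ convergence. Identifying the limits of the derivatives would then identify $S$ with a smooth representative, which explains the corollary's name.
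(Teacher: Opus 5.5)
Your proposal is correct and follows essentially the same route as the paper. $\mathsf{E}$ and $\mathsf{Q}$ come directly from \autoref{prop:HeatTensorRegularity}. For $S$, every pole-space-time derivative of $H\log H$ is $H$ times a polynomial in the relative heat jets, with at most one factor $1+\log H$; \autoref{lem:heat-jets} bounds this by $H(1+|\log H|)(1+d(o,y))^{N}$, which \autoref{lem:AbsoluteEntropyTails} makes integrable. The obstacle you raise is already settled by \eqref{eq:rel_heat}, which gives polynomial growth in $d(o,y)$ and also covers $\partial_t^\ell$ directly, so neither the conversion via $\square_x H=0$ nor the distributional fallback is needed.
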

\begin{proof}
	The assertions for $\mathsf{E}$ and $\mathsf{Q}$ follow from \autoref{prop:HeatTensorRegularity}. By induction using the product rule, every pole-space-time derivative of $H\log H$ is $H$ times a polynomial in the relative heat jets, with at most one factor $1+\log H$. The heat-jet estimates of \autoref{lem:heat-jets} therefore bound it on compact positive-time cylinders by $H(1+|\log H|)$ times a polynomial in $d(o,y)$. Thus \autoref{lem:AbsoluteEntropyTails} permits arbitrary differentiation under the entropy integral and proves the assertion for $S$.
\end{proof}

We use the Nash entropy $S_{x}(t)$ defined in \eqref{eq:Nash}, following \cite{Ni2004b} with the normalization used by Colding \cite[\S 5.1]{Col2012}. The pole-variable heat-operator formula appears in the calculation in the proof of \cite[Theorem~5.9]{Bam2020}. We record its static, complete-noncompact version. \autoref{lem:AbsoluteEntropyTails}, \autoref{prop:Gbounds}, and \autoref{lem:heat-jets} justify the required output-variable integration. The next two propositions use the heat-kernel variables differently: the first differentiates the pole $x$, while in the second we fix $x$ and integrate by parts in the output variable $y$.

\begin{proposition}\label{prop:PoleEntropy}
	Pointwise on $M\times(0,\infty)$,
	\begin{equation}\label{eq:PoleEntropy01}
		\square_{x} S_{x}(t) = -\frac{\mathsf{E}_{x}(t)}{t^{2}}.
	\end{equation}
\end{proposition}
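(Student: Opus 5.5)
We compute $\square_x S_x(t)$ directly, differentiating under the output integral; \autoref{cor:CanonicalRep} and \autoref{lem:AbsoluteEntropyTails} justify every interchange of pole derivatives with $y$-integration. Write $H=H(x,y,t)$, $h=-\log H$, so that
\[
S_x(t)=\int_M hH\dd\Vol_y-\frac n2\log(4\pi t)-\frac n2 .
\]
For fixed $y$, the function $u=H(\cdot,y,\cdot)$ solves $\square_x u=0$ in the pole variable, and $\square_x h=-|\nabla_x h|^2$. The product rule for $\square$ gives
\[
\square_x(hH)=H\square_x h+h\square_x H-2\langle\nabla_x h,\nabla_x H\rangle .
\]
Substituting $\nabla_x H=-H\nabla_x h$ turns this into
\[
\square_x(hH)=-H|\nabla_x h|^2+2H|\nabla_x h|^2=H|\nabla_x h|^2 .
\]

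We then integrate in $y$. Since $\square_x$ acts only on the pole variable, it commutes with the $y$-integral under the majorants of \autoref{prop:HeatTensorRegularity} and \autoref{lem:AbsoluteEntropyTails}. By \eqref{eq:Covariance},
\[
\square_x\int_M hH\dd\Vol_y=\int_M|d_xh_x|^2\dd\mu_{x,t}=\frac{\tr_g\mathsf{G}_x(t)}{2t}.
\]
The normalizing term satisfies $\square_x\bigl(-\tfrac n2\log(4\pi t)\bigr)=-\tfrac{n}{2t}$. Adding the two contributions and using \eqref{eq:2:Excess},
\[
\square_x S_x(t)=\frac{\tr\mathsf{G}_x(t)-n}{2t}=-\frac{1}{t^2}\cdot\frac t2\bigl(n-\tr\mathsf{G}_x(t)\bigr)=-\frac{\mathsf{E}_x(t)}{t^2},
\]
which is \eqref{eq:PoleEntropy01}.

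The only genuine issue is justifying the interchange. The integrands $\partial_t(hH)$, $\nabla_x(hH)$ and $\hes_x(hH)$ are each $H$ times polynomials in the pole heat jets, with at most one factor $1+|\log H|$. The jet estimates of \autoref{lem:heat-jets} bound these polynomials by powers of $1+d(o,y)$, locally uniformly on $K\times[a,b]$. Combined with \eqref{eq:AET01}, this gives integrable majorants independent of $(x,t)$ on compact cylinders. Dominated convergence applied to difference quotients then lets $\partial_t$ and $\Delta_x$ pass under the integral. I expect this step, specifically controlling the factor $\log H$ in the far field, to be the main obstacle. It is handled exactly as in \autoref{cor:CanonicalRep}, using the Gaussian upper bound \eqref{eq:entropy-Gaussian} together with the Li--Yau lower bound to control $|\log H|$ polynomially in $d(o,y)$.
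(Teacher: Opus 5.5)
Your proposal is correct and follows the paper's proof essentially verbatim. Both use the fixed-$y$ identity $\square_x(-H\log H)=|\nabla_x H|^2/H$, integrate in $y$ to obtain $\tr_g\mathsf{G}_x(t)/(2t)$, and subtract the $\frac{n}{2t}$ coming from the normalization. One small correction: \autoref{lem:AbsoluteEntropyTails} (and hence \autoref{cor:CanonicalRep}) controls the $\log H$ factor using only the Gaussian upper bound together with $r|\log r|\le(\log A)r+\frac{2}{e}r^{1/2}$, not a Li--Yau lower bound; your two-sided route, giving $|\log H|\le C(1+d(o,y))^2$ on compact positive-time cylinders, is nonetheless also valid.
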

\begin{proof}
	For a positive function $u$ with $\square_{x} u=0$, direct calculation shows:
	\begin{equation}\label{eq:PoleEntropy02}
		\square_{x}(-u\log u) = \frac{|\nabla_{x}u|^{2}}{u}.
	\end{equation}
	Apply this to $H(x, y, t)$ for fixed $y$ and then integrate in $y$. Differentiation under the integral is justified here: \autoref{lem:AbsoluteEntropyTails} controls the term containing $H |\log H|$, while \autoref{prop:Gbounds} gives
	\begin{equation*}
		\int_{M} H_{x} |d_{x}h_{x}|^{2} \dd\Vol_{y} = \frac{\tr \mathsf{G}_{x}(t)}{2t} \le \frac{n}{2t},
	\end{equation*}
	which is integrable on every compact positive-time pole-variable cylinder. \autoref{lem:heat-jets} and \autoref{lem:AbsoluteEntropyTails} justify the classical differentiation under the integral. We obtain
	\begin{equation*}
		\square_{x} \left( -\int_{M} H_{x }\log H_{x} \dd\Vol_{y} \right) = \frac{\tr \mathsf{G}_{x}(t)}{2t}.
	\end{equation*}
	Subtracting the time derivative of $\frac{n}{2}\log(4\pi t)+\frac{n}{2}$ proves \eqref{eq:PoleEntropy01}.
\end{proof}

\begin{proposition}\label{prop:EntropyMono}
	For every fixed $x$, the function $S_{x}$ is locally absolutely continuous and nonincreasing on $(0, \infty)$. Moreover 
	\begin{equation*}
		\lim_{t\to 0^{+}} S_{x}(t) = 0,
	\end{equation*}
	therefore $S_{x}(t)\le 0$.
\end{proposition}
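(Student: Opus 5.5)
The plan is to compute $\partial_t S_x(t)$ for fixed $x$ by integrating by parts in the output variable $y$, to identify it with minus a Fisher-type defect that the Li--Yau bound \autoref{lem:Fisher_Bound} makes nonnegative, and then to treat the small-time limit separately.

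\emph{Step 1: the derivative.} For fixed $x$, write $u(y,t)=H(x,y,t)$, so that $\square_y u=0$. Formally,
\[
\frac{d}{dt}\Bigl(-\int_M u\log u\,\dd\Vol_y\Bigr)=-\int_M(\Delta_y u)(1+\log u)\,\dd\Vol_y=\int_M\frac{|\nabla_y u|^{2}}{u}\,\dd\Vol_y ,
\]
and therefore
\[
S_x'(t)=\int_M H|\nabla_y\log H|^{2}\,\dd\Vol_y-\frac{n}{2t}\le 0
\]
by \eqref{eq:Fisher_Bound}. To make this rigorous I will not differentiate naively. I will integrate the identity on $[t_1,t_2]\subset(0,\infty)$ against output cutoffs $\phi_m$ built from the Greene--Wu exhaustion, as in \autoref{lem:Weight_Wein}. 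The cutoff error terms are of the form $\int(\nabla\phi_m\cdot\nabla u)\log u$ and $\int(\nabla\phi_m\cdot\nabla u)$. They are controlled by $|\nabla\phi_m|\le C/m$, by Cauchy--Schwarz together with the Fisher bound, and by the tail bounds of \autoref{lem:AbsoluteEntropyTails}, which are uniform on compact time intervals. The time derivative of $u\log u$ is handled by the same tail lemma and the heat-jet estimates. This gives
\[
S_x(t_2)-S_x(t_1)=\int_{t_1}^{t_2}\Bigl(\int_M H|\nabla\log H|^{2}-\frac{n}{2t}\Bigr)\dd t ,
\]
with an integrand that is locally bounded. Hence $S_x$ is locally absolutely continuous and nonincreasing.

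\emph{Step 2: the limit $t\to0^{+}$.} This is the main obstacle. The heat kernel is Euclidean-like at small scales: locally uniformly near $x$ one has $(4\pi t)^{n/2}H(x,y,t)\to$ the Euclidean profile. I will prove $\liminf_{t\to0}S_x(t)\ge0$ and $\limsup_{t\to0}S_x(t)\le0$ separately.

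For the upper bound I will use the Li--Yau lower bound $H\ge c_\delta\,\Vol B(x,\sqrt t)^{-1}e^{-d^{2}/((4-\delta)t)}$ together with $\Vol B(x,\sqrt t)\le\omega_n t^{n/2}$, which holds by Bishop--Gromov. These give $-\log H\le\frac n2\log(4\pi t)+\frac{d^{2}}{(4-\delta)t}+C_\delta$. Integrating against $H$ and using the second-moment asymptotic $\int d^{2}H\to 2nt(1+o(1))$ yields the bound only up to a constant, so this is too crude. Instead I will use sharper small-time asymptotics: the Minakshisundaram--Pleijel/Varadhan expansion on a fixed compact ball $B(x,r_0)$, obtained by comparison with the Dirichlet heat kernel, gives $-\log H=\frac n2\log(4\pi t)+\frac{d^{2}}{4t}+o(1)$ uniformly on $B(x,\sqrt{t}\,R)$. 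The contribution from outside $B(x,R\sqrt t)$ is made small uniformly by the Gaussian upper bound \eqref{eq:entropy-Gaussian}, as in the tail lemma. Then $\int H\frac{d^{2}}{4t}\to\frac n2$, and $S_x(t)\to0$.

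Since $S_x$ is nonincreasing, $S_x(t)\le\lim_{\tau\to0^+}S_x(\tau)=0$, which gives the final assertion. The remaining risk is justifying the uniform local expansion. For that I will invoke the standard parametrix for the local Dirichlet kernel and the fact that the difference between the global and local kernels is $O(e^{-c/t})$ on $B(x,r_0/2)$.
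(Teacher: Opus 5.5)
Your proposal is correct. Step~1 is essentially the paper's argument. The paper quotes Colding's identity $tS_x'(t)=t\int_M H|d_yh_x|^2\dd\Vol_y-\frac{n}{2}$, the integrated Li--Yau bound \eqref{eq:Fisher_Bound}, and Ni's justification of the output-variable integration by parts on complete noncompact manifolds. You carry out that justification yourself with output cutoffs. One small point there: your Cauchy--Schwarz step needs $\int_M H(\log H)^2\dd\Vol_y$ to be bounded on compact time intervals. That is not literally \eqref{eq:AET01}, but it follows from the same Gaussian bound together with $r(\log r)^2\le C_A(r+r^{1/2})$ for $0<r\le A$.

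The genuine difference is the small-time limit, which the paper simply cites from \cite[Proposition~1.1(ii)]{Ni2004a} and \cite{Xu2013}. You instead write $S_x(t)=\int_M H\bigl(-\log((4\pi t)^{n/2}H)\bigr)\dd\Vol_y-\frac{n}{2}$ and prove $S_x(t)\to0$ directly. On $B(x,R\sqrt{t})$ the localized Minakshisundaram--Pleijel expansion gives $-\log((4\pi t)^{n/2}H)=d^2/(4t)+o(1)$ uniformly. This holds because the absolute parametrix error and the $O(e^{-c/t})$ discrepancy between the global and Dirichlet kernels are negligible against the main term $(4\pi t)^{-n/2}e^{-d^2/(4t)}\ge(4\pi t)^{-n/2}e^{-R^2/4}$ there. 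This yields the upper and lower bounds simultaneously, so the Li--Yau lower bound you first tried is not needed.

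On the complement you must control $H\,|\log((4\pi t)^{n/2}H)|$ rather than $H$ alone. This works exactly as in \autoref{lem:AbsoluteEntropyTails}, using:
\begin{itemize}
\item the Li--Yau upper bound;
\item $\Vol B(x,\sqrt{t})\ge\Vol B(x,1)\,t^{n/2}$ for $t\le1$;
\item $r|\log r|\le C_A(r+r^{1/2})$;
\item $\Vol B(x,r)\le\omega_{n}r^{n}$.
\end{itemize}
Together these give a tail contribution bounded by some $\epsilon(R)\to0$, uniformly in $t\le1$.

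Your route is self-contained and makes clear that the limit is a purely local fact, with $\Ric\ge0$ entering only through the Gaussian tail and volume comparisons. The paper's route is shorter but delegates both the noncompact integration by parts and the small-time asymptotics to the literature.
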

\begin{proof}
	For a fixed $x$, Colding's entropy $S$ agrees with $S_{x}$ defined in \eqref{eq:Nash}. His entropy identity gives
	\[
		tS_{x}'(t) = t\int_{M} H_{x}|d_{y}h_{x}|^{2} \dd\Vol_{y}-\frac{n}{2}\le 0,
	\]
	see \cite[Lemma~5.1]{Col2012}. The inequality is the integrated Li--Yau estimate. The corresponding integration by parts on complete noncompact manifolds with $\Ric \ge 0$ is justified in the section ``Extensions and the value of $\mu(0)$'' of \cite{Ni2004b}. The entropy-tail and Fisher-information bounds established above also verify the required local integrability. Thus $S_{x}\in AC_{\mathrm{loc}}(0,\infty)$ and is nonincreasing.
	The limit $\lim_{t\to 0^{+}}S_{x}(t)=0$ is \cite[Proposition~1.1(ii)]{Ni2004a}; see also \cite{Xu2013}. The monotonicity of $S_{x}$ then implies $S_{x}(t)\le 0$.
\end{proof}

\section{Entropy decay and volume growth}

\begin{lemma}\label{lem:MiniPotential}
	Let $0<a<T$. Suppose that $u,f$ are smooth on $M\times(a,T]$, that
	$u$ is continuous on $M\times[a,T]$, and that $u\ge0$, $f\ge0$, and
	$\square u\ge f$. Then
	\begin{equation}\label{eq:MiniPotential01}
		u(x, T)\ge \int_{a}^{T} P_{T-t}(f(\bullet, t))(x)\dd t.
	\end{equation}
\end{lemma}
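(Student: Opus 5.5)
The plan is a Duhamel comparison on a compact exhaustion, using the fact that the minimal heat kernel is the increasing limit of Dirichlet heat kernels. This route needs no growth assumption on $u$, which matters because the lemma assumes only $u\ge0$. Since $f$ is smooth only on $M\times(a,T]$, I first fix $\delta\in(0,T-a)$ and work on $[a+\delta,T]$, letting $\delta\downarrow0$ at the end. On $M\times[a+\delta,T]$ both $u$ and $f$ are smooth, and on any compact set $f$ is bounded there.

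Choose an exhaustion $\Omega_{1}\Subset\Omega_{2}\Subset\cdots$ of $M$ by relatively compact domains with smooth boundary. Let $H_{k}$ be the Dirichlet heat kernel of $\Omega_{k}$. Classically $0\le H_{k}\le H_{k+1}\le H$ and $H_{k}\uparrow H$ pointwise; this is the definition of the minimal heat kernel. Define
\begin{equation*}
	v_{k}(x,\tau)=\int_{a+\delta}^{\tau}\int_{\Omega_{k}}H_{k}(x,y,\tau-t)f(y,t)\dd\Vol_{y}\dd t .
\end{equation*}
Because $f$ is smooth and bounded on $\overline{\Omega}_{k}\times[a+\delta,T]$, standard parabolic theory on a smooth compact domain gives the following properties of $v_{k}$. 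It is $C^{2,1}$ in the interior and continuous on $\overline{\Omega}_{k}\times[a+\delta,T]$. It satisfies $\square v_{k}=f$, vanishes on $\partial\Omega_{k}$, and has zero initial data at $t=a+\delta$.

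Now set $w=u-v_{k}$ on $\overline{\Omega}_{k}\times[a+\delta,T]$. Then $\square w\ge f-f=0$, so $w$ is a supersolution. On the parabolic boundary we have $w=u\ge0$: on the initial slice because $v_{k}=0$ there, and on $\partial\Omega_{k}$ because $v_{k}=0$ there and $u\ge0$ everywhere. The weak minimum principle on the compact cylinder then gives $w\ge0$, hence
\begin{equation*}
	u(x,T)\ge v_{k}(x,T)\qquad\text{for } x\in\Omega_{k}.
\end{equation*}
Let $k\to\infty$. Since $f\ge0$ and $H_{k}\mathbf 1_{\Omega_{k}}\uparrow H$, monotone convergence yields
\begin{equation*}
	u(x,T)\ge\int_{a+\delta}^{T}P_{T-t}\bigl(f(\bullet,t)\bigr)(x)\dd t .
\end{equation*}
Finally let $\delta\downarrow0$; monotone convergence again, using $f\ge0$, gives \eqref{eq:MiniPotential01}.

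The main technical point is the regularity of $v_{k}$ up to the parabolic boundary, which is needed for the minimum principle to apply to $w$. This is where the shift to $a+\delta$ is used. It makes $f$ bounded and smooth up to the initial time, so the Dirichlet Duhamel integral attains its zero boundary and initial values continuously. Nothing beyond continuity of $u$ at $t=a$ and $u\ge0$ is needed, and no growth condition on $u$ at infinity enters, since the Dirichlet boundary data absorb the noncompactness.
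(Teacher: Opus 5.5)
Your proof is correct and follows essentially the same route as the paper: a Dirichlet Duhamel potential on an exhaustion $\Omega_k\uparrow M$, the parabolic minimum principle applied against $u$, and monotone convergence via domain monotonicity $H_k\uparrow H$ of the Dirichlet heat kernels. The only difference is cosmetic. The paper multiplies $f$ by a compactly supported spacetime cutoff $\eta_j$ on $M\times(a,T)$, so the source vanishes near the parabolic boundary. You instead shift the initial time to $a+\delta$ and rely on the standard boundary continuity of the Dirichlet Duhamel integral; as a side effect, your argument never actually uses the continuity of $u$ at $t=a$.
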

\begin{proof}
	This is the standard minimality principle for an inhomogeneous heat equation; for the Dirichlet heat semigroup and the exhaustion construction of the minimal heat kernel, see Grigor'yan \cite[Chapters~5, 7, and~8]{Gri2009}. On the spacetime manifold $M\times(a,T)$ choose a smooth proper exhaustion $\rho$ and a smooth nonincreasing function $\chi$ that is one on $(-\infty,1]$ and zero on $[2,\infty)$. Then $\eta_{j}=\chi(\rho/j)$ satisfies $0\le\eta_{j}\uparrow1$ and has compact support; in particular, its spatial projection is compact. Choose relatively compact smooth domains $\Omega_{j}\uparrow M$ containing these spatial projections, and set $f_{j}=\eta_{j}f$. Thus $0\le f_{j}\uparrow f$. With $P_{r}^{\Omega_{j}}$ denoting the Dirichlet heat semigroup, set
	\[
		V_{j}(x,s)=\int_a^sP_{s-t}^{\Omega_{j}}\bigl(f_{j}(\cdot,t)\bigr)(x)\dd t.
	\]
	Then $V_{j}$ is a classical solution of $\square V_{j}=f_{j}$ with zero parabolic boundary values. Since $f_{j}\le f$, the classical parabolic maximum principle applied to $V_{j}-u$ gives $0\le V_{j}\le u$. Domain monotonicity of the Dirichlet heat kernels, their convergence to the minimal heat kernel, and monotone convergence yield
	\[
		V_{j}(x,T)\uparrow\int_a^TP_{T-t}\bigl(f(\cdot,t)\bigr)(x)\dd t.
	\]
	Passing to the limit proves \eqref{eq:MiniPotential01}. The global exhaustion is essential: without a boundary condition at infinity, a nonnegative homogeneous caloric remainder need not vanish.
\end{proof}

Combining the new two-unit estimate for the centered source with the preceding classical source identities yields the following new entropy decay:

\begin{proposition}\label{prop:TwoDeftoEntropDecay}
	For every $x\in M$ and $T\ge 2$, 
	\begin{equation}\label{eq:TwoDeftoEntropDecay01}
		S_{x}(T) \le - \log T + C_{n}.
	\end{equation}
\end{proposition}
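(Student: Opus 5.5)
The plan is to feed \autoref{thm:two-unit} through the two source equations, \eqref{eq:Box_E=Q} and \eqref{eq:PoleEntropy01}. Each time I will use the minimal heat-potential comparison \autoref{lem:MiniPotential}, always starting at the initial time $a=1$, where \autoref{thm:two-unit} becomes available. First $\mathsf{E}$ grows almost linearly; then $-S$ grows logarithmically.

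\emph{Step 1: lower bound for $\mathsf{E}$.} Fix $t\ge 1$ and apply \autoref{lem:MiniPotential} on $M\times[1,t]$ with $u=\mathsf{E}$ and $f=\tfrac12\mathsf{Q}$. The hypotheses hold for the following reasons:
\begin{itemize}
\item $u$ and $f$ are smooth on $M\times(0,\infty)$ by \autoref{cor:CanonicalRep}, so $u$ is continuous up to the initial time $1$;
\item $u\ge 0$ by \eqref{eq:2:E_Bound};
\item $f\ge \tfrac12$ by \autoref{lem:OneUnit};
\item $\square u=f$ by \autoref{prop:PolewiseDefectSource}.
\end{itemize}
The comparison gives
\begin{equation*}
	\mathsf{E}_{z}(t)\ge \frac12\int_{1}^{t}P_{t-\tau}\bigl(\mathsf{Q}_{\bullet}(\tau)\bigr)(z)\dd\tau .
\end{equation*}
Now insert \eqref{eq:two-unit} with $s=t-\tau$ and use $(x+y)^{1/3}\le x^{1/3}+y^{1/3}$:
\begin{equation*}
	\mathsf{E}_{z}(t)\ge \int_{1}^{t}\Bigl(1-C_n\tau^{-1/3}-C_n(t-\tau)^{-1/3}\Bigr)\dd\tau\ge t-1-C_n t^{2/3}.
\end{equation*}
This holds for every $z\in M$ and every $t\ge1$. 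The singularity $(t-\tau)^{-1/3}$ at $\tau=t$ is integrable, and the case $P_{s}\mathsf{Q}=+\infty$ only helps. If the bound is negative it carries no information but remains true, since $\mathsf{E}\ge0$.

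\emph{Step 2: lower bound for $-S$.} Apply \autoref{lem:MiniPotential} again on $M\times[1,T]$, now with $u=-S$ and $f=\mathsf{E}/t^{2}$. The hypotheses hold because:
\begin{itemize}
\item $u\ge0$ by \autoref{prop:EntropyMono};
\item $f\ge0$, and both functions are smooth for positive times by \autoref{cor:CanonicalRep};
\item $\square u=f$ by \autoref{prop:PoleEntropy}.
\end{itemize}
Since $P_{T-t}$ is positive and preserves constants by \eqref{eq:Conservation}, Step 1 gives $P_{T-t}(\mathsf{E}_\bullet(t))\ge t-1-C_nt^{2/3}$ pointwise. Hence
\begin{equation*}
	-S_{x}(T)\ge\int_{1}^{T}\Bigl(\frac1t-\frac{1}{t^{2}}-\frac{C_n}{t^{4/3}}\Bigr)\dd t\ge \log T-1-3C_n ,
\end{equation*}
which is \eqref{eq:TwoDeftoEntropDecay01} for $T\ge2$.

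\emph{Expected obstacle.} The algebra is routine; the delicate points are measure-theoretic.
\begin{itemize}
\item One must check that $\tau\mapsto P_{t-\tau}(\mathsf{Q}_\bullet(\tau))(z)$ is measurable, which follows from smoothness and positivity of $\mathsf{Q}$ together with Tonelli.
\item One must check that the lower bound may be taken inside the double integral. This is again Tonelli, since after adding the constant the integrands are nonnegative, or else one works directly with the pointwise lower bound from Step 1.
\item One must confirm that \autoref{lem:MiniPotential} needs no upper control of $f$. Its proof builds the potential from truncations $f_j\uparrow f$ and uses monotone convergence, so an infinite or unbounded source causes no trouble.
\end{itemize}
Were this to fail, I would run the comparison with the truncated source $\min(f,k)$ and let $k\to\infty$.
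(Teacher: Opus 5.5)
Your proof is correct and follows the paper's argument: two applications of \autoref{lem:MiniPotential} from time $1$, first through $\square_{x}\mathsf{E}=\mathsf{Q}/2$ and then through $\square_{x}(-S)=\mathsf{E}/t^{2}$, driven by \autoref{thm:two-unit}. The only difference is bookkeeping. You insert \autoref{thm:two-unit} already in the first step (with $s=t-\tau$), which gives the uniform bound $\mathsf{E}_{z}(t)\ge t-1-C_{n}t^{2/3}$, i.e.\ $\tr_{g}\mathsf{G}\le n-2+O(t^{-1/3})$, and then you use that $P_{T-t}$ fixes constants; the paper instead composes the two potentials via the semigroup property and Tonelli into the single weight $\tau^{-1}-T^{-1}$ and applies \autoref{thm:two-unit} once with $s=T-\tau$, which requires the split-at-$T/2$ estimate of $f(T)$, so your ordering makes the final integrals elementary.
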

\begin{proof}
	By \autoref{prop:EntropyMono} and \eqref{eq:2:E_Bound}, the functions $-S$ and $\mathsf{E}$ are nonnegative. \autoref{cor:CanonicalRep} makes both finite and smooth on every positive time. Both applications of \autoref{lem:MiniPotential} begin at time $1$.
	First, $\square_{x}\mathsf{E}=\mathsf{Q}/2$ gives, for $t>1$,
	\begin{equation}\label{eq:Growth_E}
		\mathsf{E}_{x}(t)\ge \frac{1}{2}\int_{1}^{t}P_{t-\tau}(\mathsf{Q}_{\bullet}(\tau))(x)\dd\tau.
	\end{equation}
	Second \autoref{prop:PoleEntropy} gives $\square_{x}(-S)=\mathsf{E}/t^{2}$, therefore:
	\begin{equation}\label{eq:Growth_S}
		-S_{x}(T)\ge \int_{1}^{T}\frac{1}{t^{2}}P_{T-t}(\mathsf{E}_{\bullet}(t))(x)\dd t.
	\end{equation}
	For each $1<t<T$, Tonelli's theorem and the semigroup property applied to \eqref{eq:Growth_E} give
	\begin{equation*}
		P_{T-t}(\mathsf{E}_{\bullet}(t))(x)\ge \frac{1}{2}\int_{1}^{t}
		P_{T-\tau}(\mathsf{Q}_{\bullet}(\tau))(x)\dd\tau.
	\end{equation*}
	Plugging this into \eqref{eq:Growth_S} and applying Tonelli once more, we obtain:
	\begin{equation}\label{eq:Whole_S_Growth}
		\begin{aligned}
		 -S_{x}(T)&\ge \frac{1}{2}\int_{1}^{T}\int_{1}^{t}\frac{1}{t^{2}}
		 P_{T-\tau}(\mathsf{Q}_{\bullet}(\tau))(x)\dd\tau \dd t\\
		 &= \frac{1}{2}\int_{1}^{T}\left(\int_{\tau}^{T}\frac{\dd t}{t^{2}}\right)
		 P_{T-\tau}(\mathsf{Q}_{\bullet}(\tau))(x)\dd\tau\\
		 &=\frac{1}{2}\int_{1}^{T}\left( \frac{1}{\tau}-\frac{1}{T} \right)
		 P_{T-\tau}(\mathsf{Q}_{\bullet}(\tau))(x)\dd\tau.
		\end{aligned}
	\end{equation}
	All integrals above are nonnegative; hence Tonelli applies without an integrability assumption. For almost every $\tau\in (1, T)$, both $\tau$ and $T-\tau$ are positive, so \autoref{thm:two-unit} applies:
	\begin{equation*}
		P_{T-\tau}(\mathsf{Q}_{\bullet}(\tau))(x)\ge 2- C_{n}
		\left( \tau^{-1} + (T-\tau)^{-1} \right)^{1/3}.
	\end{equation*}
	Therefore
	\begin{equation}\label{eq:S_Key}
		-S_{x}(T)\ge \log T-1+\frac{1}{T}-\frac{C_{n}}{2}f(T),
	\end{equation}
	where
	\begin{equation*}
		f(T)=\int_{1}^{T}\left( \frac{1}{\tau} - \frac{1}{T} \right)
		\left( \tau^{-1} + \left( T-\tau \right)^{-1} \right)^{1/3} \dd\tau.
	\end{equation*}
	Since $0\le \tau^{-1}-T^{-1} \le \tau^{-1}$, the subadditivity of $r\to r^{1/3}$ gives:
	\begin{equation*}
		\int_{1}^{T}\left( \frac{1}{\tau}-\frac{1}{T} \right)\tau^{-1/3}
		\le \int_{1}^{T} \tau^{-4/3} \dd\tau\le 3.
	\end{equation*}
	We split the integral at $T/2$:
	\begin{equation*}
		\int_{1}^{T/2}\tau^{-1}(T-\tau)^{-1/3} \dd\tau \le C T^{-1/3}\log T\le C
	\end{equation*}
	\begin{equation*}
		\int_{T/2}^{T}\tau^{-1}(T-\tau)^{-1/3}\dd\tau \le \frac{2}{T}\int_{0}^{T/2}
		r^{-1/3}\dd r\le C.
	\end{equation*}
	Both estimates are uniform in $T\ge 2$; therefore $f(T)\le C$, where $C<\infty$ is universal. After enlarging $C_n$, \eqref{eq:S_Key} implies \eqref{eq:TwoDeftoEntropDecay01}.
\end{proof}

\begin{proof}[Proof of \autoref{thm:Main}]
	The Li--Yau Gaussian upper estimate \eqref{eq:entropy-Gaussian} implies that
	\begin{equation*}
		H(x, y, t)\le \frac{C_{n}}{\Vol B(x, \sqrt{t})} \exp \left( -\frac{d(x, y)^{2}}{5t} \right).
	\end{equation*}
	Take $-\log$, multiply by $H$, and integrate first over $B(x,R)$. The constant term on the right is multiplied by $\int_{B(x,R)}H\dd\Vol_{y}$. Letting $R\to\infty$ is justified on the left by \autoref{lem:AbsoluteEntropyTails}; stochastic completeness handles the constant term, and monotone convergence applies to the nonnegative distance term. Discarding that term gives
	\begin{equation*}
		-\int_{M} H \log H\dd\Vol_{y} \ge \log \Vol B(x, \sqrt{t}) - C_{n},
	\end{equation*}
	and therefore after absorbing the fixed normalization constants, we have
	\begin{equation*}
		\log \frac{\Vol(B(x, \sqrt{t}))}{t^{n/2}} \le S_{x}(t) + C_{n}.
	\end{equation*}
	\autoref{prop:TwoDeftoEntropDecay} yields 
	\begin{equation*}
		\Vol(B(x, \sqrt{t})) \le C_{n} t^{(n-2)/2}, \quad (t\ge 2).
	\end{equation*}
	For $R\ge \sqrt{2}$, take $t=R^{2}$. For $R\le \sqrt{2}$, Bishop--Gromov gives:
	\begin{equation*}
		\Vol(B(x, R))\le \omega_{n}R^{n} \le 2\omega_{n} R^{n-2}.
	\end{equation*}
	This completes the proof.
\end{proof}

\appendix
\section{Pole vs output identities}

In this appendix, we compare the identities defined by the pole-variable and the output-variable. For a fixed pole, Ni and Colding differentiate in the output variable and obtain the usual entropy monotonicity formulas \cite{Ni2004b,Col2012}. On the other hand, Ni's pointwise $W$-density identity \cite[Lemma~2.2]{Ni2004b} may be applied to $x\mapsto H(x, y, t)$ for fixed $y$ and then integrated in $y$; this gives the raw pole-variable $W$ and $\mathsf{E}$ source identities below. Bamler explicitly computes the pole-variable heat-operator formula for pointed Nash entropy in the Ricci-flow setting \cite[proof of Theorem~5.9]{Bam2020}.

The scalar combination defining the defect $\mathsf{E}$ is algebraically implicit in these papers. The purpose of this appendix is to distinguish the output variable and pole variable identities and to record their exact relation to the centered source $\mathsf{Q}$. 

Write the heat kernel as:
\begin{equation*}
	H(x, y, t)=(4\pi t)^{-n/2}e^{-f_{x}(y, t)}.
\end{equation*}
Equivalently,
\begin{equation*}
	f_{x}(y, t)=h_{x}(y, t)-\frac{n}{2}\log(4\pi t),
\end{equation*}
so the pole and output variable derivatives of $f_{x}$ agree with the corresponding derivatives of $h_{x}$. Fixing the pole variable $x$ and differentiating in the output variable $y$, define the output Fisher-information term
\begin{equation*}
	F_{x}^{\mathrm{out}}(t):=t\int_{M}H_{x}|d_{y}f_{x}|^{2}\dd\Vol_{y}-\frac{n}{2},
	\qquad W_{x}^{\mathrm{out}}:=S_{x}+F_{x}^{\mathrm{out}}.
\end{equation*}

The fixed-pole calculations of Ni and Colding, see \cite{Ni2004b} and \cite[Lemmas~5.1 and~5.2]{Col2012}, give
\begin{align}
	\partial_{t}S_{x}&=\frac{F_{x}^{\mathrm{out}}}{t}, \label{eq:appendix-output-S}\\
	\partial_{t}\left(tF_{x}^{\mathrm{out}}\right)&=-2t^{2}\int_{M}H_{x}\left(\left|\hes_{y}f_{x}-\frac{g_{y}}{2t}\right|^{2}
	+\Ric_{y}(\nabla_{y}f_{x},\nabla_{y}f_{x})\right)\dd\Vol_{y},\label{eq:appendix-output-tF}\\
	\partial_{t}W_{x}^{\mathrm{out}}&=-2t\int_{M}H_{x}\left(\left|\hes_{y}f_{x}-\frac{g_{y}}{2t}\right|^{2}
	+\Ric_{y}(\nabla_{y}f_{x},\nabla_{y}f_{x}) \right)\dd\Vol_{y}. \label{eq:appendix-output-W}
\end{align}

For comparison with these fixed-pole output identities, we introduce the normalized pole Fisher-information term and its associated $W$-combination:
\begin{equation}\label{eq:appendix-pole-FW-def}
	\begin{aligned}
		F_{x}^{\mathrm{pole}}(t)&:=t\int_{M}H_{x}|d_{x}f_{x}|^{2}\dd\Vol_{y}-\frac{n}{2}
		=-\frac{\mathsf{E}_{x}(t)}{t},\\
		\Wpole_{x}(t)&:=S_{x}(t)+F_{x}^{\mathrm{pole}}(t)	=S_{x}(t)-\frac{\mathsf{E}_{x}(t)}{t}.
	\end{aligned}
\end{equation}
Here the last equality in the first line follows from
\begin{equation*}
	2t\int_{M}H_{x}|d_{x}f_{x}|^{2}\dd\Vol_{y} =\tr_g\mathsf{G}_{x}(t),
\end{equation*}
together with the definition of $\mathsf{E}$.

The pointwise calculation in \eqref{eq:Bochner04}, justified by \autoref{prop:PolewiseDefectSource}, identifies the raw pole source with the centered source \eqref{eq:Q}:
\begin{equation}\label{eq:appendix-raw-centered-Q}
	\mathsf{Q}_{x}(t)=4t^{2}\int_{M} H_{x}\left(\left|\hes_{x} f_{x}-\frac{g_{x}}{2t}\right|^{2}+\Ric_{x}(\nabla_{x} f_{x},\nabla_{x} f_{x})\right)\dd\Vol_{y} .
\end{equation}
Thus the raw and centered expressions define the same function pointwise on $M\times(0,\infty)$; it is smooth by \autoref{prop:HeatTensorRegularity}. Together with \autoref{prop:PoleEntropy}, this gives the following identities pointwise, and hence also in $\mathcal D'(M\times(0,\infty))$:

\begin{align}
	\square_{x}S&=\frac{F^{\mathrm{pole}}}{t}, \label{eq:appendix-pole-S}\\
	\square_{x}\left(tF^{\mathrm{pole}}\right)&=-\frac{\mathsf{Q}}{2}, \label{eq:appendix-pole-tF}\\
	\square_{x}\Wpole&=-\frac{\mathsf{Q}}{2t}.\label{eq:appendix-pole-W}
\end{align}
Indeed, the first line is \autoref{prop:PoleEntropy}, because $F^{\mathrm{pole}}=-\mathsf{E}/t$, and the second follows from $ tF^{\mathrm{pole}} =-\mathsf{E}$ and \autoref{prop:PolewiseDefectSource}. Finally,
\begin{equation*}
	\square_{x}\left(tF^{\mathrm{pole}}\right) =F^{\mathrm{pole}}+t\square_{x}F^{\mathrm{pole}},
\end{equation*}
so the first two lines give the third after adding $\square_{x}S$ and $\square_{x}F^{\mathrm{pole}}$. By the pointwise identity \eqref{eq:appendix-raw-centered-Q}, the last identity is equivalently
\begin{equation}\label{eq:appendix-pole-dissipation}
	\square_{x}\Wpole =-2t\int_{M}H_{x}\left(\left|\hes_{x}f_{x}-\frac{g_{x}}{2t}\right|^{2}
	+\Ric_{x}(\nabla_{x}f_{x},\nabla_{x}f_{x})\right)\dd\Vol_{y}
\end{equation}
pointwise; its right-hand side is the smooth density $-\mathsf{Q}/(2t)$.

The proof of the main theorem does not use $F^{\mathrm{pole}}$ or $\Wpole$. It propagates $\mathsf{Q}$ first through $\square_{x}\mathsf{E}=\mathsf{Q}/2$, and then propagates $\mathsf{E}/t^{2}$ through $\square_{x}(-S)=\mathsf{E}/t^{2}$. Equation \eqref{eq:appendix-pole-W} algebraically combines the two source equations and makes the relation with Ni's pointwise density identity transparent. It also gives an equivalent compressed route through a single minimal-potential comparison. Indeed, $S\le0$ and $\mathsf{E}\ge0$ imply $\Wpole \le 0$, so \autoref{lem:MiniPotential} gives, for $T\ge 2$,
\begin{equation*}
	-\Wpole_{x}(T) \ge\frac{1}{2}\int_{1}^{T}\frac{1}{\tau}
	P_{T-\tau}[\mathsf{Q}_{\bullet}(\tau)](x)\dd\tau.
\end{equation*}
Since $\mathsf{E}_{x}(T)/T\le n/2$, this yields
\begin{equation*}
	-S_{x}(T)=-\Wpole_{x}(T)-\frac{\mathsf{E}_{x}(T)}{T}
	\ge\frac{1}{2}\int_{1}^{T}\frac{1}{\tau}
	P_{T-\tau}[\mathsf{Q}_{\bullet}(\tau)](x)\dd\tau-\frac{n}{2}.
\end{equation*}
The main proof retains the two-step route because it displays the separate propagation through $\mathsf{E}$ and $-S$; it does not use $F^{\mathrm{pole}}$ or $\Wpole$.

The pole identity is an integrated consequence of Ni's pointwise identity, once the noncompact passage is justified as in \autoref{prop:PolewiseDefectSource}. Heat-kernel symmetry exchanges $x$ and $y$ before either variable is frozen or integrated; it does not convert a fixed-pole output identity into a pole-variable identity after integration. In particular, \eqref{eq:appendix-output-W} is an ordinary time-monotonicity formula for a fixed pole, whereas \eqref{eq:appendix-pole-W} contains the pole Laplacian. Thus it does not imply, and we do not assert or use, monotonicity of $t\mapsto\Wpole_{x}(t)$ for fixed $x$.

\section{Positive-time regularity of the heat quantities}
\label{app:heat-regularity}
This appendix proves \autoref{prop:HeatTensorRegularity}. Its only analytic input is a locally uniform, Gaussian-integrable bound for pole-variable heat-kernel jets.

\subsection{Uniform pole-variable heat-kernel jets}

Fix a base point $o\in M$ and put $R(y)=1+d(o,y)$. The next lemma is the tail statement needed throughout the paper. We use $\nabla_{x}^m$ for an arbitrary $m$-th covariant derivative in the pole variable.

\begin{lemma}\label{lem:heat-jets}
	Let $K\Subset M$ and $0<a<b<\infty$. For every $m,\ell\ge0$ there are constants $C,N<\infty$, depending on $K,[a,b],m,\ell$ and the metric on a fixed neighborhood of $K$, such that
	\begin{equation}\label{eq:rel_heat}
		\frac{|\nabla_{x}^m\partial_{t}^\ell H(x, y, t)|}{H(x, y, t)}
		\le C R(y)^{N}, \qquad (x, t)\in K\times[a, b], \quad y\in M.
	\end{equation}
	Moreover, for every $q\ge0$ there exist $C_{q}, c_{q}>0$ such that
	\begin{equation}\label{eq:heat_maj}
		\sup_{(x, t)\in K\times[a,b]}
		 H(x, y, t)R(y)^{q} \le C_{q}\exp\bigl(-c_{q} d(o, y)^{2}\bigr).
	\end{equation}
	The right-hand side is integrable over $M$. Thus every finite product of relative pole-time jets, multiplied by $H$, has a locally uniform integrable majorant in the output variable.
\end{lemma}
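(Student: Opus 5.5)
The plan is to prove the Gaussian majorant \eqref{eq:heat_maj} first, since it is the easier half, and then derive \eqref{eq:rel_heat} from a Harnack comparison on a fixed compact pole neighbourhood.

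\emph{Gaussian majorant.} This follows exactly as in \autoref{lem:AbsoluteEntropyTails}. Combine the Li--Yau upper bound \eqref{eq:entropy-Gaussian} with the positive lower bound $\inf_{x\in K}\Vol B(x,\sqrt a)>0$. This gives $H(x,y,t)\le A\exp(-d(x,y)^2/(5b))$ for $(x,t)\in K\times[a,b]$. The triangle inequality $d(x,y)\ge d(o,y)-\operatorname{diam}(K\cup\{o\})$ turns this into $A'\exp(-d(o,y)^2/(10b))$. Any polynomial factor $R(y)^q$ can then be absorbed by halving the constant in the exponent. Integrability over $M$ comes from Bishop--Gromov, since $\Vol B(o,r)\le\omega_n r^n$.

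\emph{Relative jets.} Fix $y$ and view $u(x,t)=H(x,y,t)$ as a positive solution of $\square_x u=0$. Choose a compact neighbourhood $K'\supset K$ with $K'\subset B(o,r_0)$, and a slightly larger time interval $[a/2,2b]$.

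The idea is to bound $\sup_{Q}|\nabla_x^m\partial_t^\ell u|$ on the parabolic cylinder $Q=K\times[a,b]$ by $C\sup_{Q'}u$ over a larger cylinder $Q'=K'\times[a/2,2b]$. Here $C$ depends only on the local geometry near $K'$, $m$, $\ell$ and $a,b$. This is interior parabolic Schauder/Bernstein regularity for the heat equation on a compact smooth region, and it uses no global hypotheses. Time derivatives are converted via $\partial_t=\Delta_x$, so only spatial derivatives need estimating.

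The Li--Yau Harnack inequality then compares $\sup_{Q'}u$ with $\inf_{Q}u$ up to a factor. The key observation is that Harnack is being applied in the pole variable $x$, while $y$ is fixed and may be far away. The Harnack constant $(t_2/t_1)^{n/2}\exp(d(x_1,x_2)^2/(4(t_2-t_1)))$ depends only on the pole displacement $d(x_1,x_2)\le\operatorname{diam}K'$ and on times in $[a/2,2b]$. Hence it is uniform in $y$.

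There is one problem: Harnack compares an earlier time with a later time. To bound $\sup_{Q'}u$ at a time $t'\in[a/2,2b]$ by $u$ at some $(x,t)\in Q$, one needs $t'<t$. This fails for $t'$ near $2b>t$.

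\emph{Main obstacle.} The main obstacle is this time-direction issue. The first fix to try is to take $Q'=K'\times[a/2,b]$ and accept a backward cylinder for the interior estimates. The interior estimates only need the region below and around $Q$ in time, so this is enough at points with $t\ge a$. Harnack then gives
\[
\sup_{Q'}u\le C\,\inf_{x\in K}u(x,y,t) \qquad\text{for } t\ge a.
\]
This is an ``earlier to later'' comparison, at the cost of a constant that depends only on $a,b$ and $\operatorname{diam}K'$.

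If the top time $t=b$ still causes trouble, there are two fallbacks. One is the Hamilton/Kotschwar gradient estimate $|\nabla\log u|^2\le C t^{-1}\log(A/u)$ (Kotschwar's pole-gradient estimate mentioned in the introduction), combined with $\log(1/u)\le C R(y)^2$. The lower bound $u\ge c\,e^{-C d(o,y)^2}$ on $Q$ comes from the Li--Yau lower Gaussian bound together with volume doubling. This fallback is where the polynomial weight $R(y)^N$ naturally enters, because logarithms of $H$ grow like $d(o,y)^2$.

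Once the first-order bound holds, higher jets follow. Apply the interior estimates to the normalized function $v=u/u(x_0,t_0)$, or apply Bernstein-type estimates to $\log u$ inductively. Each step contributes at most another polynomial factor in $R(y)$. This gives \eqref{eq:rel_heat} with some finite $N$.

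The final majorant statement then follows. Products of relative jets are bounded by $CR(y)^{N'}$, and multiplying by $H$ and using \eqref{eq:heat_maj} with $q=N'$ gives the integrable majorant.
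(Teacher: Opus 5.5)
Your proof of \eqref{eq:heat_maj} is the same as the paper's and is fine. The gap is in the relative-jet part.

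\textbf{The main route.} The key inequality $\sup_{Q'}u\le C\inf_{x\in K}u(x,y,t)$, with $C$ independent of $y$, is false. It would also prove too much. Combined with fixed-size interior estimates, it would give $|\nabla_x\log H|\le C$ uniformly in $y$. But in $\mathbb{R}^n$ one has $|\nabla_x\log H(x,y,t)|=|x-y|/(2t)$.

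The Li--Yau factor $\exp\bigl(d(x_1,x_2)^2/(4(t_2-t_1))\bigr)$ is uniform in $y$ only when the time gap $t_2-t_1$ is bounded below. Any cylinder on which you apply interior estimates at $(x,t)$ contains points $(x',t')$ with $x'\neq x$ and $t'\to t$. Your $Q'=K'\times[a/2,b]$ even contains times later than $t$. On any cylinder of fixed size, the oscillation of $H(\cdot,y,\cdot)$ is genuinely exponential in $d(o,y)$. In $\mathbb{R}^n$, moving the pole a distance $r$ toward $y$ at distance $D$ multiplies $H$ by $\exp\bigl((2Dr-r^2)/(4t)\bigr)$. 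Moving from time $a$ to time $b$ at a fixed pole multiplies it by roughly $\exp\bigl(D^2(1/(4a)-1/(4b))\bigr)$. So the time-direction issue you flag is a red herring. The real obstruction is that the cylinder has fixed size.

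\textbf{The fallback.} Your fallback does give the correct first-order input, $|\nabla_x\log H|\le CR(y)$ on $K\times[a/2,b]$; the paper reads this off Kotschwar's estimate directly. Li--Yau at a single spatial point, $H(x,y,s)\le (t/s)^{n/2}H(x,y,t)$ for $s\le t$, controls backward time with no $y$-dependence. However, you then apply interior estimates to $v=u/u(x_0,t_0)$ on a cylinder of fixed radius $r$. The gradient bound only gives $\sup v\le e^{CR(y)r}$ there, so you obtain $|\nabla^m_x\partial^\ell_tH|/H\le Ce^{CR(y)}$, not the polynomial bound \eqref{eq:rel_heat}. The claim that each inductive step contributes only a polynomial factor is asserted, not proved.

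\textbf{The missing idea.} Let the cylinder shrink with $y$. Take $B(x,2r_y)\times[t-4r_y^2,t]$ with $r_y=r_0/R(y)$. Then:
\begin{enumerate}[label=(\roman*)]
\item Because $R(y)r_y=r_0$, the gradient bound gives $H(z,y,s)\le CH(x,y,s)$ on $B(x,2r_y)$.
\item Same-point Li--Yau Harnack gives $H(x,y,s)\le 2^{n/2}H(x,y,t)$ for $s$ in the time interval.
\item Hence $\sup_{\mathcal{C}_{x,y,t}}H\le CH(x,y,t)$.
\item The scale-invariant interior estimate carries the factor $r_y^{-m-2\ell}=(R(y)/r_0)^{m+2\ell}$, which yields \eqref{eq:rel_heat} with $N=m+2\ell$.
\end{enumerate}
This $1/R(y)$ scaling, or equivalently a parabolic rescaling of $\log u$ by its gradient bound, is exactly where the polynomial weight comes from.
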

\begin{proof}
	Our goal is to find bounds on all pole-time derivatives over the fixed cylinder $K\times[a,b]$ whose dependence on the output point $y$ is at most polynomial. The proof has four steps: First, Kotschwar's estimate controls the spatial oscillation of the heat kernel on a fixed time slice. Second, the Li--Yau Harnack inequality then controls its backward time oscillation. These two estimates bound the heat kernel on a shrinking backward parabolic cylinder by its value at the top point. Third, higher-order interior parabolic regularity converts this cylinder bound into derivative bounds. Finally, the Li--Yau Gaussian upper estimate supplies an integrable tail in the output variable.

	Kotschwar's heat-kernel estimate \cite[Theorem~1, equation~(2)]{Kot2007}, in the first (pole) variable, gives, for any fixed $\delta\in(0,4)$,
	\begin{equation}\label{eq:K-gradient}
		|\nabla_{x}\log H(x, y, s)|^2 \le \frac{2}{s}\left(C_{n,\delta}+ \frac{d(x, y)^{2}}{(4-\delta)s}\right).
	\end{equation}
	The remaining argument is local in the pole. Cover the original compact set by finitely many compact sets $K_{j}$ for which $K_{j}\Subset V_{j}\Subset U_{j}$, with each $U_{j}$ relatively compact in a coordinate chart. It is enough to prove the estimate on one such triple, which we relabel as $K\Subset V\Subset U$, and then take the maximum of the finitely many constants. Choose $r_{0}>0$ so that $\overline{B(x, 4r_{0})}\Subset V$ for every $x\in K$. Decrease $r_{0}$ so that $r_{0}\le 1$ and $4r_{0}^{2}< a/2$, and set
	\begin{equation*}
		r_{y}=\frac{r_{0}}{R(y)}.
	\end{equation*}
	For $(x, t)\in K\times[a, b]$, introduce the backward cylinder
	\begin{equation*}
		\mathcal{C}_{x, y, t} =B(x, 2r_{y})\times[t-4r_{y}^{2}, t]
	\end{equation*}
	which is contained in $V\times[a/2,b]$. Moreover, every $(z, s)\in\mathcal{C}_{x, y, t}$ satisfies $t/2<s\le t$. In summary,
	\begin{equation}\label{eq:cylinder-location}
		\mathcal C_{x,y,t}\subset V\times[a/2,b], \qquad t/2<s\le t \quad\text{for every }(z, s)\in\mathcal C_{x, y, t}.
	\end{equation}

	For $z\in B(x, 2r_{y})$, integration of \eqref{eq:K-gradient} along a minimizing curve from $x$ to $z$ shows, uniformly for $s\in[a/2, b]$, that
	\begin{equation*}
		H(z, y, s)\le C H(x,y,s).
	\end{equation*}
	Indeed, $d(w, y)\le d(o, y)+C_U$ for every $w\in B(x,2r_{y})$, so $|\nabla_{x} \log H|\le C R(y)$ on this compact pole-time range, while $R(y)r_{y}=r_{0}$. The Li--Yau's parabolic Harnack inequality \cite[Theorem~2.2(i), pp.~167--168]{LY1986}, with potential $q=0$, curvature parameter $K=0$, and the two spatial points both equal to $x$, gives
	\begin{equation*}
		H(x, y, s)\le \left(\frac{t}{s}\right)^{n/2}H(x, y, t) \le 2^{n/2}H(x, y, t), \qquad t-4r_{y}^2\le s\le t.
	\end{equation*}
	Therefore
	\begin{equation}\label{eq:cylinder-control}
		\sup_{\mathcal{C}_{x, y, t}}H(\cdot, y, \cdot) \le C H(x, y, t).
	\end{equation}

	It remains to convert \eqref{eq:cylinder-control} into derivative bounds. We use the standard local parabolic estimate on nested backward cylinders: after decreasing $r_{0}$ if necessary, for every $m,\ell \ge 0$ there exists $C_{K, m,\ell}$ such that any solution of $\partial_su=\Delta_gu$ on
	\[
		B(x, 2r)\times[t-4r^{2}, t], \qquad x\in K,\quad 0<r\le r_{0},
	\]
	satisfies
	\begin{equation}\label{eq:para_est}
		|\nabla^{m}\partial_{s}^\ell u|(x, t) \le C_{K, m, \ell}r^{-m-2\ell} \sup_{B(x, 2r)\times[t-4r^2,t]}|u|.
	\end{equation}
	This is the usual one-sided interior Schauder estimate followed by its higher-regularity bootstrap \cite[Theorems~8.11.1 and~8.12.1, p130--132]{Kry1996}; see also \cite[Chapter~IV, p47, and Theorem~4.9, p59--60]{Lie1996}. The estimate is uniform for $x\in K$ because a finite coordinate cover of a fixed neighborhood of $K$ has uniformly controlled smooth coefficients. Its terminal-time formulation uses only the lateral and earlier-time parabolic boundary, and hence requires no values after time $t$.

	Applying \eqref{eq:para_est} to $u(z, s)=H(z, y, s)$ with $r=r_{y}$, and then using \eqref{eq:cylinder-control}, gives
	\[
		|\nabla_{x}^{m} \partial_{t}^\ell H(x, y, t)| \le C_{K,m,\ell}r_{y}^{-m-2\ell}H(x, y, t).
	\]
	Since $r_{y}=r_{0}/R(y)$, this proves \eqref{eq:rel_heat}.
	
	Finally, Li--Yau's Gaussian upper estimate \cite[Corollary~3.1]{LY1986} gives, after choosing any $\varepsilon\in(0,1)$ and weakening the Gaussian exponent if necessary,
	\[
		H(x,y,t)\le \frac{C}{\Vol B(x,\sqrt t)} \exp\!\left(-\frac{d(x,y)^2}{5t}\right).
	\]
	Here $\inf_{x\in K}\Vol B(x,\sqrt a)>0$, $t\le b$, and $d(x, y)\ge d(o, y)-\sup_{x\in K}d(o, x)$; in particular, $d(x, y)^2\ge \frac12d(o, y)^2-C_K$. Hence
	\begin{equation*}
		H(x, y, t)\le C\exp\bigl(-c\,d(o,y)^2\bigr), \qquad (x, t)\in K\times[a, b].
	\end{equation*}
	The polynomial factor in \eqref{eq:heat_maj} is absorbed by weakening $c$. Bishop--Gromov also gives at most Euclidean volume growth, so this Gaussian is integrable. The last assertion follows from \eqref{eq:rel_heat} and \eqref{eq:heat_maj}.
\end{proof}

\begin{remark}
	We emphasize that the constants in \autoref{lem:heat-jets} are local regularity constants and may depend on the metric near the compact set $K$. They are used to justify positive-time smoothness and differentiation under the integral in $y$. None of these constants enters the dimension only constant in \autoref{thm:two-unit} or in the final volume constant in \autoref{thm:Main}. In particular not bounded geometry hypothesis is imposed.
\end{remark}

\subsection{Proof of \autoref{prop:HeatTensorRegularity}}
\label{proof:HeatTensorRegularity}

Fix $K\Subset M$ and $0<a<b<\infty$, and put $R(y)=1+d(o,y)$. By the product and quotient rules, every positive-order pole-space-time derivative of $h=-\log H$ is a polynomial in the relative jets $H^{-1}\nabla_{x}^{m}\partial_{t}^\ell H$. Hence every derivative of
\begin{equation*}
	 H d_{x}h\otimes d_{x}h
\end{equation*}
is bounded on $K\times[a,b]$ by $H(x,y,t)R(y)^{N}$ for some $N$. The majorant in \autoref{lem:heat-jets} is integrable in $y$, uniformly on this cylinder. Dominated differentiation first shows that $\mathsf{G}$ is finite and smooth. Its derivatives are therefore locally bounded coefficients. Applying the same product and quotient rules to
\begin{equation*}
	 H\left|\hes_{x}h-\frac{\mathsf{G}}{2t}\right|^{2}
\end{equation*}
now gives the same type of majorant and shows that $D$ is finite and smooth. Since
\begin{equation*}
 \mathsf{E}=\frac t2(n-\tr_g\mathsf{G}),
 \qquad \mathsf{Q}=4t^{2}D+|g-\mathsf{G}|^{2} +2t\langle\Ric,\mathsf{G}\rangle,
\end{equation*}
the fields $\mathsf{E}$ and $\mathsf{Q}$ are finite and smooth as well. The same majorants justify every pole-space-time differentiation under the output-variable integrals. They also show that every derivative of the Hilbert-valued tensors $\mathcal{A}_{t}$ and $\mathcal{B}_{t}$ introduced in the proof of \autoref{prop:SpecProj_Regularity} has squared $L^{2}_{y}$ norm bounded by $\int_{M}H(x,y,t)R(y)^N\dd\Vol_{y}$ for some $N$; hence those tensors are smooth as $L^{2}_{y}$-valued maps. This proves \autoref{prop:HeatTensorRegularity}.

Only derivatives of finite parabolic order are used in the main proof; the all-order statement costs no additional argument once the interior estimate in \autoref{lem:heat-jets} is available.

\clearpage
\printbibliography
\end{document}